\documentclass[10pt]{article}

\usepackage{epsfig}
\usepackage{amsmath}
\usepackage{amsfonts}
\usepackage{amsthm}
\usepackage{amsbsy}
\usepackage{appendix}
\usepackage{epstopdf}
\usepackage[table]{xcolor}
\usepackage{subfloat}
\usepackage{float}
\usepackage[thinlines]{easytable}
\usepackage{array}
\allowdisplaybreaks
\numberwithin{equation}{section}

\theoremstyle{plain}

\newtheorem{theo}{Theorem}[section]

\theoremstyle{remark}
\newtheorem{remark}{\bf Remark}[section]
\theoremstyle{remark}

\theoremstyle{remark}

\newcommand{\Rho}{\mathrm{P}}

\setlength{\textwidth}{16cm}
\setlength{\oddsidemargin}{0cm}
\setlength{\evensidemargin}{0cm}
\setlength{\textheight}{22cm}
\setlength{\topmargin}{-1cm}	
	
\title{A High Order Stochastic Asymptotic Preserving Scheme for Chemotaxis Kinetic Models with Random Inputs\footnote{This research was partially supported by
NSF grants DMS-1522184 and DMS-1107291: RNMS KI-Net, by NSFC
grant No. 91330203, and by the Office of the Vice Chancellor for Research and Graduate Education at the
University of Wisconsin-Madison with funding from the Wisconsin Alumni Research Foundation.}
\author{Shi Jin\footnote{Department of Mathematics, University of Wisconsin, Madison, WI 53706, USA (sjin{@}wisc.edu) and Institute of Natural Sciences, School of Mathematical Science, MOE-LSEC and SHL-MAC, Shanghai Jiao Tong
University, Shanghai 200240, China.} , Hanqing Lu\footnote{Department of Mathematics, University of Wisconsin, Madison, WI 53706, USA (hlu57{@}wisc.edu).} and Lorenzo Pareschi\footnote{Department of Mathematics \& Computer Science, University of Ferrara, Ferrara, 44121, Italy (lorenzo.pareschi{@}unife.it).}}} 

\begin{document}
\maketitle
\bigskip
{\bf Abstract}
In this paper, we develop a stochastic Asymptotic-Preserving (sAP) scheme for the kinetic chemotaxis system with random inputs, which will converge to the modified Keller-Segel model with random inputs in the diffusive regime. Based on the generalized Polynomial Chaos (gPC) approach, we design a high order stochastic Galerkin method using implicit-explicit (IMEX) Runge-Kutta (RK) time discretization with a macroscopic penalty term. The new schemes improve the parabolic CFL condition to a hyperbolic type when the mean free path is small, which shows significant efficiency especially in uncertainty quantification (UQ) with multi-scale problems. The stochastic Asymptotic-Preserving property will be shown asymptotically and verified numerically in several tests. Many other numerical tests are conducted to explore the effect of the randomness in the kinetic system, in the aim of providing more intuitions for the theoretic study of the chemotaxis models.
\\

{\bf Key words.} Chemotaxis kinetic model, chemotaxis Keller-Segel model, diffusion limit, uncertainty quantification, asymptotic preserving, generalized polynomial chaos, stochastic Galerkin method, implicit-explicit Runge-Kutta methods.
\tableofcontents

\section{Introduction}
Chemotaxis is the movement of an organism in response to a chemical stimulus (called chemoattractant), approaching the regions of highest chemoattractant concentration. This process is critical to the early growth and subsequent development of the organism.

Mathematical study of this chemical system originates from the well-known (Patlak-)Keller-Segel model \cite{keller1970initiation,keller1971model,keller1971traveling,keller1980assessing,patlak1953random}. This model describes the drift-diffusion interactions between the cell density and chemoattractant concentration at a macroscopic level:
\begin{subequations}\label{ks1}
\begin{align}
&\partial_t\rho=\nabla\cdot (D\nabla \rho-\chi\rho\nabla s),\\
&\partial_t s=D_0\Delta s+q(s,\rho),
\end{align}
\end{subequations}
where $\rho(x,t)\geq 0$ is the cell density at position $x\in \mathbb R^n$ and time $t$, $s(x,t)\geq 0$ is the density of the chemoattractant, $D$ and $D_0$ are positive diffusive constants of the cells and the chemoattractant respectively, and $\chi$ is the positive chemotactic sensitivity constant. In (\ref{ks1}) the function $q(s,\rho)$ describes the interactions between the cell density and the chemoattractant such as productions and degradations. In the literature, several modifications and studies of the Keller-Segel model have been conducted during recent years, e.g. \cite{calvez2006modified,chertock2012chemotaxis,hillen2009user,horstmann20031970,perthame2004pde,perthame2006transport}. The one related to our study is the modified Keller-Segel model in \cite{calvez2006modified}:
\begin{subequations}\label{ks2}
\begin{align}
&\partial_t\rho=\nabla\cdot (D\nabla \rho-\chi\rho\nabla s),\\
&s=-\frac{1}{n\pi}\log|x|\ast \rho,
\end{align}
\end{subequations}
where $n$ is the space dimension. Notice that in $2$D, (\ref{ks1}) and (\ref{ks2}) are exactly the same if $q=0$.

An important property of the Keller-Segel system is the blow up behavior, which depends on the dimension of the system and the initial mass \cite{brenner1999diffusion,herrero1996chemotactic,nagai1997global,stevens1997aggregation}. For the $2$D Keller-Segel system (when (\ref{ks1}) and (\ref{ks2}) are equivalent), there exists a critical mass $M_c$ depending on the parameters of the system.  When the initial mass $M<M_c$ (subcritical case), global solution exists and presents a self-similar profile in long time; When the initial mass $M>M_c$ (supercritical case), the solution will blow up in finite time; When the initial mass $M=M_c$ (critical case), the solution will blow up in infinite time. This property can be extended to $1$D and $3$D for the modified Keller-Segel system (\ref{ks2}). The formula for the critical mass is given by
\begin{equation}\label{1-3}
M_c=\frac{2n^2\pi D}{\chi}.
\end{equation}

From another perspective, the chemotaxis can be described by a class of Boltzmann-type kinetic equations at a microscopic level. The kinetic description of the phase space cell density was first introduced by Alt \cite{alt1980orientation,alt1980biased} via a stochastic interpretation of the ``run" and ``tumble" process of bacteria movements. Later on Othmer, Dunbar and Alt formulated the following non-dimensionalized  chemotaxis kinetic system with parabolic scaling in  \cite{othmer1988models}:
\begin{equation}\label{1-4}
\varepsilon\frac{\partial f}{\partial t}+v\cdot \nabla_x f=\frac{1}{\varepsilon}\int_V(T_\varepsilon f'-T_\varepsilon^*f)dv'.
\end{equation}
Here $f(t,x,v)$ is the density function of cells at time $t\in \mathbb R^+$, position $x\in\mathbb R^n$ and moving with velocity $v\in V$, $V$ is a finite subset of $\mathbb R^n$. The small parameter $\varepsilon$ is the radio of the mean running length between jumps to the typical observation length scale and $f'$ is the abbreviation for $f(t,x,v')$. $T_\varepsilon=T_\varepsilon[s](t,x,v,v')$ with the property $T^*_\varepsilon[s](t,x,v,v')=T_\varepsilon[s](t,x,v',v)$, is the turning kernel operator depending on the density of chemoattractant $s(t,x)$, which also solves the Poisson equation (\ref{ks1}b).

The relationship between the kinetic chemotaxis model (\ref{1-4}) and the Keller-Segel model (\ref{ks1}) was formally derived by Othmer and Hillen in \cite{othmer2000diffusion,othmer2002diffusion} using moment expansions. Then Chalub et al. gave a rigorous proof that the Keller-Segel system (\ref{ks2}) (before blow up time in supercritical case and for all time in subcritical case) is the macroscopic limit (as $\varepsilon\to 0$) of the kinetic chemotaxis system (\ref{1-4}) coupled with (\ref{ks2}b) in three dimensions \cite{chalub2004kinetic}. For certain type of turning kernel $T_\varepsilon$ (the nonlocal model in Section $2.1$), \cite{chalub2004kinetic} also proved the global existence of the solution to the kinetic systems (\ref{1-4}) for any initial conditions, which behaves completely differently from the Keller-Segel system. For other types of of turning kernel $T_\varepsilon$ (e.g. the local model in Section $2.2$), many questions are unsolved yet. Blow up may happen with supercritical initial mass but the critical mass is different from the Keller-Segel equations \cite{bournaveas2009critical}. The long time behavior of the subcritical case is unclear yet. Also, theoretic proof of the blow up in the $1$D case is not available \cite{sharifi2011one}.

The microscopic kinetic model, with interesting properties and mysterious behaviors, make it appealing to investigate the system numerically. Moreover, the global existence of the solution with nonlocal turning kernel could help us to understand the behavior of chemotaxis after Keller-Segel solutions blow up. One of the difficulties in solving the kinetic chemotaxis model, as other multi-scale kinetic equations, is the stiffness when $0<\varepsilon\ll1$. Classical algorithms require taking spatial and time step of $O(\varepsilon)$, thus causing unaffordable computational cost. To overcome this difficulty, one has to design an \textit{Asymptotic-Preserving} (AP) scheme, which discretizes the kinetic equations with mesh and time step independent of $\varepsilon$ and  preserves a consistent discretization of the limiting modified Keller-Segel equation as $\varepsilon\to 0$. The AP methods were first coined in \cite{jin1999efficient} and have been applied to a variety of multi-scale kinetic equations. We refer to \cite{degond2011asymptotic,degond2017asymptotic,dimarco2014numerical,jin2010asymptotic} for detailed reviews on AP schemes. In particular, AP schemes have been designed to solve $1$D and $2$D kinetic chemotaxis model in \cite{carrillo2013asymptotic,chertockasymptotic}, which are most relevant to our study.

The main issue we want to address in this paper is the uncertainties involved in the kinetic model due to modeling and experimental errors. For example, different turning kernels are proposed as operators that mimic the ``run" and ``tumble" process of cell movements and thus may contain uncertainties. Moreover, initial and boundary data, or other coefficients in the equations could also be measured inaccurately. In such a system that behaves so sensitively to initial mass and turning kernel, only by quantifying the \textit{intrinsic} uncertainties in the model, could one get a better understanding and a more reliable prediction on the chemotaxis from computational simulations, especially in the situation where many properties are not clarified by theoretic study. 

The goal of this paper is to design a high order efficient numerical scheme such that uncertainty quantification (UQ) can be easily conducted. Only recently, studies in UQ begin to develop for kinetic equations \cite{hu2016stochastic,jin2016august,jin2017asymptotic,jinlu,jin2015asymptotic,zhu2016vlasov,crouseilles2017nonlinear}. To deal with numerical difficulties for uncertainty and multi-scale at the same time, the \textit{stochastic Asymptotic-Preserving} (sAP) notion was first introduced in \cite{jin2015asymptotic}. Since then, the \textit{generalized Polynomial Chaos} (gPC) based \textit{Stochastic Galerkin} (SG) framework has been developed to a variety of kinetic equations \cite{jinlu,jin2015asymptotic,zhu2016vlasov,crouseilles2017nonlinear}. In this paper, we are going to conduct UQ under the same gPC-SG framework, which projects the uncertain kinetic equations into a vectorized deterministic equations and thus allowing us to extend the deterministic AP solver in \cite{carrillo2013asymptotic}. The sAP property is going to be verified formally by showing that the kinetic chemotaxis model with uncertainty after SG projection in fully discrete setting, as $\varepsilon\to 0$, automatically becomes a numerical discretization of the Keller-Segel equations with uncertainty after the SG projection. As realized in \cite{jin2015asymptotic} and rigorously proved in \cite{jin2016august,li2016uniform,jin2017hypocoercivity}, the spectral accuracy is expected using this gPC-SG method as long as the regularity of the solution (which is usually preserved from initial regularity in kinetic equations) behaves well.

In addition, we improve the accuracy and efficiency of the numerical scheme by using the implicit-exlicit (IMEX) Runge-Kutta (RK) methods (see \cite{boscarino2013implicit,boscarino2017unified,pareschi2005implicit} and the references therein) and macroscopic penalization method. A similar approach was utilized in our previous work \cite{jin2017efficient} for linear transport and radiative heat transfer equations with random inputs. In \cite{jin2017efficient}, we improved the parabolic CFL condition $\Delta t=O((\Delta x)^2)$ in \cite{jin2015asymptotic} to a hyperbolic CFL condition $\Delta t=O(\Delta x)$, which allows to save the computational time significantly.

The rest of the paper is organized as follows. In section 2, the kinetic models with random inputs of two different  turning kernels are described and the macroscopic limits of both models are formally derived. From section 3 to section 5, the numerical scheme for the kinetic chemotaxis equations are designed and the sAP properties are illustrated. In section 6, several numerical tests are presented to illustrate the accuracy and efficiency of our scheme. The sAP property is also verified numerically.  Different properties, e.g. blow up, stationary solutions etc., influenced by the introduced randomness of the local and nonlocal model, are explored for the chemotaxis system. The interactions between peaks involved with different sources of uncertainty are compared to show the dynamics. Finally, some conclusions are drawn in section 7.

\section{The Kinetic Descriptions for Chemotaxis}
The chemotaxis kinetic system with random inputs we are going to study is (\ref{1-4}) coupled with (\ref{ks2}b) in 1D:
\begin{subequations}\label{2-1}
\begin{align}
&\varepsilon\frac{\partial f}{\partial t} +v\frac{\partial f}{\partial x}=\frac{1}{\varepsilon}\int_V(T_\varepsilon f'-T_\varepsilon^*f)dv',\\
&s=-\frac{1}{\pi}\log|x|\ast\rho, \ \ \ \rho=\int_Vfdv,
\end{align}
\end{subequations}
where
$x\in \Omega=[-x_{\text{max}},x_{\text{max}}]\subset \mathbb R$, $v\in V=[-v_{\text{max}},v_{\text{max}}]\subset \mathbb R$.

The only difference is now $f=f(t,x,v,z)$ and $s=s(t,x,z)$ have dependence on the random variable $z\in I_z\subset\mathbb R^d(d\geq 1)$ with compact support $I_z$, in order to account for random uncertainties.

Now we specify the turning kernel operator $T_\varepsilon$ in (\ref{2-1}). Since the turning kernel $T_\varepsilon[s](t,x,z,v,v')$ measures the probability of velocity jump of cells from $v$ to $v'$, it has the following properties
\begin{equation}\label{2-2}
\begin{aligned}
&T_\varepsilon[s](t,x,z,v,v')\geq 0,\\
&T_\varepsilon[s](t,x,z,v,v')=F(z,v)+\varepsilon T_1+O(\varepsilon^2),
\end{aligned}
\end{equation}
where $F(z,v)$ is the equilibrium of velocity distribution and $T_1\geq 0$ characterizes the directional preference.

\subsection{The 1D Nonlocal Model}
Now considering the nonlinear kernel introduced in \cite{chalub2004kinetic} with uncertainty,
\begin{equation}\label{2-3}
T_\varepsilon[s](t,x,z,v,v')=\alpha_+(z)\psi(s(t,x,z),s(t,x+\varepsilon v,z))+\alpha_-(z)\psi(s(t,x,z),s(t,x-\varepsilon v',z)).
\end{equation}
The first term describes the cell movement to a new direction decided by the detection of current environment and probable new location and the second term describes the influence of the past memory on the choice of the new moving direction.

For simplicity, the past memory influence is neglected. Since $\alpha_+$ is an experimental parameter, we introduce the randomness on $\alpha_+(z)>0$ with the probability density function $\lambda(z)$ for the random variable $z$ and take
\begin{equation}\label{2-4}
\psi(s(t,x,z),s(t,x+\varepsilon v,z))=\bar F(v)+\delta^\varepsilon s(x,z,v),
\end{equation} 
where 
\begin{equation}\label{2-5}
\delta^\varepsilon s(x,z,v)=(s(t,x+\varepsilon v,z)-s(t,x,z))_+:=\left\{
\begin{aligned}
&s(t,x+\varepsilon v,z)-s(t,x,z)&&\text{if}\ \ s(t,x+\varepsilon v,z)-s(t,x,z)>0\\
&0&&\text{otherwise}
\end{aligned}
\right.,
\end{equation}
and $\bar F(v)$ satisfies
\begin{equation}\label{2-6}
\left\{
\begin{aligned}
&\int_V\bar F(v)dv=1,\\
&\bar F(v)=\bar F(|v|).
\end{aligned}
\right.
\end{equation}
Notice that $\delta^\varepsilon s$ is an $O(\varepsilon)$ term which corresponds to $\varepsilon T_1$ in (\ref{2-2}).

Then the kinetic system (\ref{2-1}) becomes
\begin{subequations}\label{2-7}
\begin{align}
&\varepsilon \frac{\partial f}{\partial t}+v\frac{\partial f}{\partial x}=\frac{\alpha_+(z)}{\varepsilon}\left[(\bar F(v)+\delta^\varepsilon s(v))\rho-\left(1+\int_V\delta^\varepsilon s(v')dv'\right)f\right],\\
&s=-\frac{1}{\pi}\log|x|\ast \rho.
\end{align}
\end{subequations}

Positive initial conditions and reflection boundary conditions for $f$, reflecting boundary conditions for $s$ are imposed as following:
\begin{subequations}\label{2-8}
\begin{align}
&f(0,x,z,v)=f^I(x,z,v)\geq 0,\\
&s(0,x,z)=s^I(x,z)\geq 0,\\
&f(t,\pm x_{\text{max}},z,v)=f(t,\pm x_{\text{max}},z,-v),\\
&\partial_xs|_{x=\pm x_{\text{max}}}=0.
\end{align}
\end{subequations}
\begin{remark}
The global existence of the solution to (\ref{2-7}) for fixed $z$ with any initial mass is proved in \cite{chalub2004kinetic}.
\end{remark}
\subsection{The 1D Local Model}
For the local model, we consider the turning kernel introduced in \cite{bournaveas2009critical} with uncertainty,
\begin{equation}\label{2-9}
T_\varepsilon=T_\varepsilon[s](t,x,z,v,v')=\alpha_+(z)\left[\bar F(v)+\varepsilon(v\cdot \nabla s (x))_+\right],
\end{equation}
where $\bar F$ is the equilibrium function satisfying (\ref{2-6}) and $\alpha(z)>0$ describes the desire of the cell to change to a favorable direction, which could come with uncertainty. Similarly as in section 2.1, we introduce the randomness on $\alpha_+(z)>0$. Then the kinetic equation (\ref{2-1}) in one dimension is
\begin{subequations}\label{2-10}
\begin{align}
&\varepsilon\frac{\partial f}{\partial t}+v\frac{\partial f}{\partial x}=\frac{\alpha_+}{\varepsilon}\left[(\bar F(v)+\varepsilon(v\cdot \nabla s)_+)\rho-(1+c_1\varepsilon|\nabla s|)f\right],\\
&s=-\frac{1}{\pi}\log|x|\ast \rho,
\end{align}
\end{subequations}
with $c_1=\int_V(v\cdot \nabla s/|\nabla s|)_+dv=\frac{1}{2}\int_V|v|dv$. The same initial and boundary conditions in (\ref{2-8}) are applied.

\subsection{The Macroscopic Limits}
The nonlocal kinetic model (\ref{2-7}) and the local one (\ref{2-10}) give the same asymptotic limit when $\varepsilon\to 0$. Inserting the Hilbert expansion into (\ref{2-7}a) and (\ref{2-10}a) and collecting the same order terms, one can derive the classical modified Keller-Segel system for $\rho$ as $\varepsilon\to 0$:
\begin{subequations}\label{2-11}
\begin{align}
&\partial_t \rho=\partial_x\left(\frac{D}{\alpha_+}\partial_x\rho-\chi\rho\partial_x s\right),\\
&s=-\frac{1}{\pi}\log|x|\ast \rho,\\
&\partial_x\rho|_{x=\pm x_{\text{max}}}=0,\\
&\partial_xs|_{x=\pm x_{\text{max}}}=0,
\end{align}
\end{subequations}
where
\begin{equation}\label{2-12}
D=\int_V|v|^2\bar F(v)dv, \ \ \chi=\frac{1}{2}\int_V|v|^2dv.
\end{equation}
We refer to \cite{chalub2004kinetic} for the details.

\subsection{The Critical Mass with Random Inputs}
To derive the critical mass for system (\ref{2-11}), we show, following \cite{calvez2006modified}, that the second momentum (with respect to $x$) of $\rho$ cannot remain positive for all time. 

We use 

$$\partial_x s=\partial_x(-\frac{1}{\pi}\log|x|\ast\rho)=-\frac{1}{\pi}\int_\Omega\frac{1}{x-y}\rho(y)dy=-\mathcal H \rho,$$

 where $\mathcal H$ denotes the Hilbert transform \cite{zuazo2001fourier}. Then
\begin{equation}\label{2-13}
\begin{aligned}
\frac{d}{dt}\int_\Omega\frac{1}{2}|x|^2\rho(x,z,t)dx=&\int_\Omega\frac{1}{2}|x|^2\frac{\partial \rho}{\partial t}dx\\
=&\int_\Omega\frac{1}{2}|x|^2\partial_x\left(\frac{D}{\alpha_+(z)}\partial_x\rho-\chi\rho\partial_xs\right)dx\\
=&-\int_\Omega x\left(\frac{D}{\alpha_+(z)}\partial_x\rho-\chi\rho\partial_xs\right)dx\\
=&-\frac{D}{\alpha_+(z)}[x_{\text{max}}\rho(x_{\text{max}})+x_{\text{max}}\rho(-x_{\text{max}})]+\frac{D}{\alpha_+(z)}M\\
&-\frac{\chi}{\pi}\int_\Omega \rho(x)\lim_{\delta \to 0}\int_{|x-y|>\delta}\frac{x}{x-y}\rho(y)dydx\\
=&-\frac{D}{\alpha_+(z)}x_{\text{max}}[\rho(x_{\text{max}})+\rho(-x_{\text{max}})]+\frac{D}{\alpha_+(z)}M\\
&-\frac{\chi}{2\pi}\lim_{\delta \to 0}\int_\Omega\int_{|x-y|>\delta}\rho(x)\rho(y)dxdy\\
=& -\frac{D}{\alpha_+(z)}x_{\text{max}}[\rho(x_{\text{max}})+\rho(-x_{\text{max}})]-\frac{\chi}{2\pi}M^2\left(1-\frac{M_c(z)}{M}\right),
\end{aligned}
\end{equation}
where
\begin{equation}\label{2-14}
M_c(z)=\frac{2\pi D}{\chi \alpha_+(z)}.
\end{equation}
Here we assume that the initial data is independent of $z$ and we use the conservation of mass, i.e. $M=\int_\Omega \rho dx$ is a constant independent of $z$.

When $M>M_c(z)$, $\frac{d}{dt}\int_\Omega\frac{1}{2}|x|^2\rho(x,z,t)dx\leq -c<0$, where $c$ is a positive constant. To preserve the positivity of this second moment (with respect to $x$), some singularity has to occur so that the above computation will not hold at certain time. The singularity is rigorously analyzed in \cite{dolbeault2004optimal,blanchet2006two} and $\partial_xs$ is unbounded in this case. Thus blow up occurs.

When $M< M_c(z)$, the second moment (with respect to $x$) is locally controlled and global existence of weak solution can be obtained \cite{calvez2006modified}.

\begin{remark}
When $n\geq 2$, the computation is similar and the general formular for $M_c(z)$ is 
$$M_c(z)=\frac{2n^2\pi D}{\chi\alpha_+(z)}.$$
\end{remark}

In practice, one is more interested in the behavior of $\mathbb{E}[\rho(x,z,t)]$, the expected value of $\rho(x,z,t)$. We have the following theorem analyzing the influence of initial mass on $\mathbb{E}[\rho(x,z,t)]$.

\begin{theo}
Suppose that the total mass $M$ is independent of $z$. Denote $\bar M_c$ as the critical mass for $\mathbb{E}[\rho(x,z,t)]$, i.e. when $M>\bar M_c$, $\mathbb{E}[\rho(x,z,t)]$ will blow up; when $M<\bar M_c$, $\mathbb{E}[\rho(x,z,t)]$ will be bounded for all time. Then we have
\begin{equation}\label{2-15}
\bar{M}_c=\mathbb{E}[M_c(z)].
\end{equation}
\end{theo}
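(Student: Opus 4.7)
The plan is to imitate the deterministic second-moment calculation (\ref{2-13}), but applied to $\mathbb{E}[\rho]$ instead of $\rho(\cdot,z,\cdot)$ for fixed $z$. Since the initial data (hence the total mass $M$) is independent of $z$ by assumption, I would take the expectation of both sides of (\ref{2-13}), interchange $\mathbb{E}$ with $\partial_t$ and with the $x$-integral by Fubini, and use linearity of expectation on the right-hand side. The crucial observation is that $M_c(z)=\frac{2\pi D}{\chi\alpha_+(z)}$ enters only through the combination $-\frac{\chi}{2\pi}M^2\bigl(1-M_c(z)/M\bigr)$, whose expectation is exactly $-\frac{\chi}{2\pi}M^2\bigl(1-\mathbb{E}[M_c(z)]/M\bigr)$. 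The resulting identity reads
\begin{equation*}
\frac{d}{dt}\int_\Omega\tfrac{1}{2}|x|^2\,\mathbb{E}[\rho](x,t)\,dx
= -Dx_{\text{max}}\,\mathbb{E}\!\left[\frac{\rho(x_{\text{max}})+\rho(-x_{\text{max}})}{\alpha_+(z)}\right]
-\frac{\chi}{2\pi}M^2\!\left(1-\frac{\mathbb{E}[M_c(z)]}{M}\right).
\end{equation*}

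For the supercritical direction ($M>\mathbb{E}[M_c(z)]$), I would argue exactly as in the deterministic case: the boundary term is $\le 0$ because $\alpha_+>0$ and $\rho\ge 0$, while the last term is a strictly negative constant, say $-c<0$. Hence the non-negative quantity $\int_\Omega\frac{1}{2}|x|^2\mathbb{E}[\rho]\,dx$ would have to decrease at least at rate $c$, eventually becoming negative, which is impossible. A singularity of $\mathbb{E}[\rho]$ (the analogue of the unboundedness of $\partial_x s$ in \cite{dolbeault2004optimal,blanchet2006two}) therefore occurs in finite time, proving $\bar{M}_c\le \mathbb{E}[M_c(z)]$.

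For the subcritical direction ($M<\mathbb{E}[M_c(z)]$), the sign of the last term is reversed, so the above identity only shows that the $|x|^2$-moment of $\mathbb{E}[\rho]$ grows at most linearly in time. I would then appeal to the deterministic global weak existence theory of \cite{calvez2006modified} for the modified Keller--Segel system: for those realizations of $z$ with $M<M_c(z)$ the solution is globally defined with locally controlled second moment, and integrating the ensemble against $\lambda(z)$ yields global boundedness of $\mathbb{E}[\rho]$. Combining both directions gives $\bar{M}_c=\mathbb{E}[M_c(z)]$.

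The main obstacle is really the subcritical direction: the condition $M<\mathbb{E}[M_c(z)]$ does \emph{not} guarantee $M<M_c(z)$ pointwise in $z$, so one may have a positive-measure set of $z$ on which the deterministic solution blows up while the expectation remains bounded. Making the averaging argument rigorous requires either (i) restricting attention to distributions $\lambda(z)$ supported on $\{z:M<M_c(z)\}$, or (ii) interpreting the critical mass for $\mathbb{E}[\rho]$ through the formal ODE inequality above rather than through pointwise regularity. A secondary technical point, but a routine one, is justifying the Fubini interchanges — this is where regularity of $\rho$ in $z$ (which the gPC-SG framework of later sections will in fact deliver) enters.
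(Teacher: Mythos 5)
Your proposal follows essentially the same route as the paper: the proof there is precisely the computation in (\ref{2-16}), namely integrating the deterministic second-moment identity (\ref{2-13}) against $\lambda(z)$, using linearity of the expectation to replace $M_c(z)$ by $\mathbb{E}[M_c(z)]$, and discarding the non-positive boundary term. Your caveat about the subcritical direction --- that $M<\mathbb{E}[M_c(z)]$ does not imply $M<M_c(z)$ pointwise in $z$ --- is a genuine subtlety the paper's proof does not address at all, so if anything your write-up is the more careful of the two.
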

\begin{proof}
Following the computations in (\ref{2-13}), we show that
\begin{equation}\label{2-16}
\begin{aligned}
\frac{d}{dt}\int_\Omega\frac{1}{2}|x|^2\mathbb{E}[\rho(x,z,t)]dx=&\int_\Omega\int_{I_z}\frac{1}{2}|x|^2\frac{\partial \rho(x,z,t)}{\partial t}\lambda(z)dzdx\\
=&\int_\Omega \int_{I_z}\frac{1}{2}|x|^2\partial_x \left(\frac{D}{\alpha_+(z)}\partial_x \rho-\chi \rho\partial_x s\right)\lambda(z)dzdx\\
=&\int_{I_z}\left[\int_\Omega \frac{1}{2}|x|^2\partial_x \left(\frac{D}{\alpha_+(z)}\partial_x\rho-\chi \rho\partial_xs\right)dx\right]\lambda(z)dz\\
=&\int_{I_z}\left[-\frac{D}{\alpha_+(z)}[x_{\text{max}}\rho(x_{\text{max}})+x_{\text{max}}\rho(-x_{\text{max}})]-\frac{\chi}{2\pi}M^2\left(1-\frac{M_c(z)}{M}\right)\right]\lambda(z)dz\\
=&-\int_{I_z}\frac{D}{\alpha_+(z)}x_{\text{max}}[\rho(x_{\text{max}})+\rho(-x_{\text{max}})]\lambda(z)dz-\frac{\chi}{2\pi}M^2\left(1-\frac{\mathbb{E}[M_c(z)]}{M}\right)\\
\leq&-\frac{\chi}{2\pi}M^2\left(1-\frac{\mathbb{E}[M_c(z)]}{M}\right).
\end{aligned}
\end{equation}
Thus, $\bar{M}_c=\mathbb{E}[M_c(z)]$ is the critical mass for $\mathbb{E}[\rho(x,z,t)]$.
\end{proof}
\begin{remark}
The same conclusion holds for $n\geq 2$.
\end{remark}

\section{The Even-Odd Decomposition}
In this section, we apply the even-odd decomposition to reformulate the problem following the same procedure as \cite{carrillo2013asymptotic} for deterministic kinetic model for chemotaxis.

\subsection{The 1D Nonlocal Model}
For $v>0$, (\ref{2-7}a) can be split into two equations:
\begin{subequations}\label{3-2}
\begin{align}
&\varepsilon \frac{\partial f(v)}{\partial t}+v\frac{\partial f(v)}{\partial x}=\frac{\alpha_+(z)}{\varepsilon}\left[(\bar F(v)+\delta^\varepsilon s(v))\rho-\left(1+\int_V\delta^\varepsilon s(v')dv'\right)f(v)\right],\\
&\varepsilon \frac{\partial f(-v)}{\partial t}-v\frac{\partial f(-v)}{\partial x}=\frac{\alpha_+(z)}{\varepsilon}\left[(\bar F(-v)+\delta^\varepsilon s(-v))\rho-\left(1+\int_V\delta^\varepsilon s(v')dv'\right)f(-v)\right].
\end{align}
\end{subequations}
Now denote the even and odd parities
\begin{subequations}\label{3-3}
\begin{align}
&r(t,x,z,v)=\mathcal R[f]=\frac{1}{2}(f(t,x,z,v)+f(t,x,z,-v)),\\
&j(t,x,z,v)=\mathcal J[f]=\frac{1}{2\varepsilon}(f(t,x,z,v)-f(t,x,z,-v)).
\end{align}
\end{subequations}
Then (\ref{3-2}) becomes
\begin{subequations}\label{3-4}
\begin{align}
&\partial_t r+v\partial_xj=\frac{\alpha_+}{\varepsilon^2}[(\bar F(v)+\mathcal R[\delta^\varepsilon s])\rho-(1+\langle \delta^\varepsilon s\rangle)r],\\
&\partial_t j+\frac{1}{\varepsilon^2}v\partial_x r=\frac{\alpha_+}{\varepsilon^2}(\mathcal J[\delta^\varepsilon s]\rho-(1+\langle \delta^\varepsilon s\rangle)j),
\end{align}
\end{subequations}
where
\begin{subequations}\label{3-5}
\begin{align}
&\langle\delta^\varepsilon s\rangle=\int_V\delta^\varepsilon s(x,v')dv',\\
&\rho=\int_Vfdv=2\int_{V^+}rdv, \ V^+=\{v\in V|v\geq 0\}.
\end{align}
\end{subequations}
Notice that, when $\varepsilon\to 0$, (\ref{3-4}) yields
\begin{subequations}\label{3-6}
\begin{align}
&r=\frac{\bar F(v)+\mathcal R[\delta^\varepsilon s]}{1+\langle \delta^\varepsilon s\rangle}\rho=\rho \bar F(v)+O(\varepsilon),\\
&j=\frac{\mathcal J[\delta^\varepsilon s]\rho-v\frac{\partial_x r}{\alpha_+}}{1+\langle \delta^\varepsilon s\rangle}=v\left(\frac{1}{2}\partial_x s\rho-\frac{\partial_x r}{\alpha_+}\right)+O(\varepsilon).
\end{align}
\end{subequations}
Substituting (\ref{3-6}) into (\ref{3-4}a) and integrating over $V^+$, one gets the same limiting Keller-Segel equations with random inputs as (\ref{2-11}).

\subsection{The 1D Local Model}
For the 1D local model, one can follow the same even-odd decomposition and obtain
\begin{subequations}\label{3-7}
\begin{align}
&\partial_t r+v\partial_x j=\frac{\alpha_+}{\varepsilon}\left[(\bar F(v)+\frac{\varepsilon}{2}|v\partial_x s|)\rho-(1+c_1\varepsilon|\partial_xs|)r\right],\\
&\partial_t j+\frac{1}{\varepsilon^2}v\partial_x r=\frac{\alpha_+}{\varepsilon^2}\left[\frac{1}{2}v\partial_x s\rho-(1+c_1\varepsilon|\partial_xs|)j\right].
\end{align}
\end{subequations}
The remaining work is the same as section 3.1.

\section{The gPC-SG Formulation}
Now we deal with the random inputs using the gPC expansion via an orthogonal polynomial series to approximate the solution. That is, for random variable $z\in \mathbb R^d$, one seeks
\begin{subequations}\label{4-1}
\begin{align}
&r(t,x,z,v)\approx r_N(t,x,z,v)=\sum_{k=1}^K\hat{r}_k(t,x,v)\Phi_k(z),\\
&j(t,x,z,v)\approx j_N(t,x,z,v)=\sum_{k=1}^K\hat{j}_k(t,x,v)\Phi_k(z),
\end{align}
\end{subequations}
where $\left\{\Phi_k(z),1\leq k\leq K, K=\begin{pmatrix}
d+N\\
d
\end{pmatrix}\right\}$ are from $\mathbb P_N^d$, the $d$-variate orthogonal polynomials of degree up to $N\geq 1$, and orthonormal
\begin{equation}\label{4-2}
\int_{I_z}\Phi_i(z)\Phi_j(z)\lambda(z)dz=\delta_{ij}, \ 1\leq i,j\leq K=\text{dim}(\mathbb P_N^d).
\end{equation}
Here $\delta_{i,j}$ the Kronecker delta function (See \cite{xiu2002wiener}).

Now inserts the approximation (\ref{4-1}) into the governing equation (\ref{3-4}) and enforces the residue to be orthogonal to the polynomial space spanned by $\{\Phi_1,\cdots, \Phi_K\}$. Thus, we obtain a set of vector deterministic equations for $\hat{\bold r}=(\hat r_1,\cdots, \hat r_K)^T$, $\hat{\bold j}=(\hat j_1,\cdots, \hat j_K)^T$ and $\hat{\bold s}=(\hat s_1,\cdots, \hat s_K)^T$:
\begin{subequations}\label{4-3}
\begin{align}
&\partial_t \hat{\bold r}+v\partial_x\hat{\bold j}=\frac{1}{\varepsilon^2}[\bar F(v)\bold M \hat{\boldsymbol \rho}+\bold B\hat{\boldsymbol \rho}-\bold M\hat{\bold r}-\bold C\hat{\bold r}],\\
&\partial_t \hat{\bold j}+\frac{1}{\varepsilon^2}v\partial_x \hat{\bold r}=\frac{1}{\varepsilon^2}(\bold E\hat{\boldsymbol \rho}-\bold M\hat{\bold j}-\bold C\hat{\bold j}),\\
&\hat{\bold s}=-\frac{1}{\pi}\log|x|\ast \hat{\boldsymbol\rho},
\end{align}
\end{subequations}
where
\begin{equation}\label{4-4}
\hat{\boldsymbol \rho}(t,x)=\langle \hat{ \bold r}\rangle=2\int_{V^+}\hat{\bold r}dv,
\end{equation}
and $\bold M=(m_{ij})_{1\leq i,j\leq K}$, $\bold B(\delta^\varepsilon s_N)=(b_{ij}(x,v))_{1\leq i,j\leq K}$, $\bold C(\langle\delta^\varepsilon s_N\rangle )=(c_{ij}(x))_{1\leq i,j\leq K}$ and $\bold E(\delta^\varepsilon s_N)=(e_{ij}(x,v))_{1\leq i,j\leq K}$ are $K\times K$ symmetric matrices with entries respectively
\begin{subequations}\label{4-5}
\begin{align}
&m_{ij}=\int_{I_z}\alpha_+(z)\Phi_i(z)\Phi_j(z)\lambda(z)dz,\\
&b_{ij}(x,v)=\int_{I_z}\alpha_+(z)\mathcal R[\delta^\varepsilon s_N]\Phi_i(z)\Phi_j(z)\lambda(z)dz,\\
&c_{ij}(x)=\int_{I_z}\alpha_+(z)\langle\delta^\varepsilon s_N\rangle\Phi_i(z)\Phi_j(z)\lambda(z)dz,\\
&e_{ij}(x,v)=\int_{I_z}\alpha_+(z)\mathcal J[\delta^\varepsilon s_N]\Phi_i(z)\Phi_j(z)\lambda(z)dz.
\end{align}
\end{subequations}

As $\varepsilon\to 0^+$ in (\ref{4-3}), since $\langle \delta^\varepsilon s_N\rangle=O(\varepsilon)$ and the matrices $\bold M$ and $\bold C$ are symmetric positive definite thus invertible,
\begin{subequations}\label{4-6}
\begin{align}
&\hat{\bold r}=(\bold M+\bold C)^{-1}(\bar F(v)\bold M+\bold B)\hat{\boldsymbol \rho}=\bar{F}(v)\hat{\boldsymbol \rho}+O(\varepsilon),\\
&\hat{\bold j}=(\bold M+\bold C)^{-1}(\bold E\hat{\boldsymbol \rho}-v\partial_x\hat{\bold r})=\bold M^{-1}\bold E\hat{\boldsymbol \rho}-v\bold M^{-1}\partial_x\hat{\boldsymbol r}+O(\varepsilon).
\end{align}
\end{subequations}
Plugging (\ref{4-6}) into (\ref{4-3}a) and integrating over $V^+$, one obtains 
\begin{equation}\label{4-7}
\partial_x\hat{\boldsymbol \rho}=\partial_x\left(D\bold M^{-1}\partial_x \hat{\boldsymbol \rho}-\chi\bold G\hat{\boldsymbol \rho}\right),
\end{equation}
where $\bold G=\frac{1}{\chi}\bold M^{-1}\langle \bold E\rangle$.

\begin{remark}
If one applies the gPC-SG formulation for the limiting Keller-Segel equation (\ref{2-11}) directly, one gets
\begin{equation}\label{4-8}
\partial_t \tilde{\boldsymbol \rho}=\partial_x\left(D\tilde{\bold M}\partial_x\tilde{\boldsymbol \rho}-\chi \tilde{\bold G}\tilde{\boldsymbol\rho}\right),
\end{equation}
where $\tilde{\bold M}=(\tilde m_{ij})_{1\leq i,j\leq K}$ and $\tilde{\bold G}=(\tilde g_{ij})_{1\leq i,j\leq K}$ are $K\times K$ symmetric matrix with entries
\begin{subequations}\label{4-9}
\begin{align}
&\tilde m_{ij}=\int_{I_z}\frac{1}{\alpha_+(z)}\Phi_i(z)\Phi_j(z)\lambda(z)dz,\\
&\tilde g_{ij}=\int_{I_z}(\partial_xs_N)\Phi_i(z)\Phi_j(z)\lambda(z)dz.
\end{align}
\end{subequations}
Although $\tilde{\bold M}$ is different from $\bold M^{-1}$, proof in \cite{ShuHuJin} shows that $\tilde {\bold M}\partial_x\tilde{\boldsymbol \rho}$ and $\bold M^{-1}\partial_x\hat{\boldsymbol \rho}$ are spectrally close to each other. The same property holds between $\tilde {\bold G}\tilde{\boldsymbol \rho}$ and $\bold G\hat{\boldsymbol \rho}$.
\end{remark}

\section{An efficient sAP Scheme Based on an IMEX-RK Method}
One can apply the relaxation method as in \cite{carrillo2013asymptotic} to the projected system (\ref{4-3}), which falls into the sAP framework proposed in \cite{jin2015asymptotic} . However, the method suffers from the parabolic CFL condition $\Delta t=O((\Delta x)^2)$.

Here we propose an efficient sAP scheme using the idea from \cite{boscarino2013implicit} to get rid of the parabolic CFL condition. By adding and subtracting the term $\mu\bar F(v)\partial_x(D\tilde{\bold M}\partial_x\hat{\boldsymbol \rho}-\chi\tilde{\bold G}\hat{\boldsymbol \rho})$\ in (\ref{4-3}a) and the term $\phi v\partial_x \hat{\bold r}$ in (\ref{4-3}b), we reformulate the problem into an equivalent form:
\begin{subequations}\label{5-1}
\begin{align}
\partial_t \hat{\bold r}=&-v\partial_x\hat{\bold j}-\mu\bar F(v)\partial_x(D\tilde{\bold M}\partial_x\hat{\boldsymbol \rho}-\chi\tilde{\bold G}\hat{\boldsymbol \rho})+\frac{1}{\varepsilon^2}\left(\bar F(v)\bold M \hat{\boldsymbol \rho}+\bold B\hat{\boldsymbol \rho}-\bold M\hat{\bold r}-\bold C\hat{\bold r}\right)+\mu\bar F(v)\partial_x(D\tilde{\bold M}\partial_x\hat{\boldsymbol \rho}-\chi\tilde{\bold G}\hat{\boldsymbol \rho})\nonumber\\
=& f_1(\hat{\bold r},\hat{\bold j})+f_2(\hat{\bold r},\hat{\bold s}),\\
\partial_t\hat{\bold j}=&-\phi v\partial_x\hat{\bold r}-\frac{1}{\varepsilon^2}\left[(1-\varepsilon^2\phi)v\partial_x \hat{\bold r}-\bold E\hat{\boldsymbol \rho}+\bold M\hat{\bold j}+\bold C\hat{\bold j}\right]=g_1(\hat{\bold r})+g_2(\hat{\bold r},\hat{\bold j}),\\
\hat{\bold s}=&-\frac{1}{\pi}\log|x|\ast \hat{\boldsymbol\rho}=h(\hat{\bold r}),
\end{align}
\end{subequations}
where $\bold M,\tilde{\bold M},\bold B, \bold C,\bold E$ and $\tilde{\bold G}$ are the same as defined in (\ref{4-5}) and (\ref{4-9}) and 
\begin{subequations}\label{5-2}
\begin{align}
&f_1(\hat{\bold r},\hat{\bold j})=-v\partial_x\hat{\bold j}-\mu\bar F(v)\partial_x(D\tilde{\bold M}\partial_x\hat{\boldsymbol \rho}-\chi\tilde{\bold G}\hat{\boldsymbol \rho}),\\
&f_2(\hat{\bold r},\hat{\bold s})=\frac{1}{\varepsilon^2}\left(\bar F(v)\bold M \hat{\boldsymbol \rho}+\bold B\hat{\boldsymbol \rho}-\bold M\hat{\bold r}-\bold C\hat{\bold r}\right)+\mu\bar F(v)\partial_x(D\tilde{\bold M}\partial_x\hat{\boldsymbol \rho}-\chi\tilde{\bold G}\hat{\boldsymbol \rho}),\\
&g_1(\hat{\bold r})=-\phi v\partial_x\hat{\bold r},\\
&g_2(\hat{\bold r},\hat{\bold j})=-\frac{1}{\varepsilon^2}\left[(1-\varepsilon^2\phi)v\partial_x \hat{\bold r}-\bold E\hat{\boldsymbol \rho}+\bold M\hat{\bold j}+\bold C\hat{\bold j}\right].
\end{align}
\end{subequations}
Here we choose $\mu=\mu(\varepsilon)$ such that
\begin{equation}\label{5-3}
\begin{aligned}
&\lim_{\varepsilon\to 0}\mu=1,\\
&\mu=0 \ \ \ \mbox{if} \ \ \ \varepsilon=O(1);
\end{aligned}
\end{equation}
and $\phi=\phi(\varepsilon)$ such that
\begin{equation}\label{5-4}
0\leq \phi\leq \frac{1}{\varepsilon^2}.
\end{equation}

The restriction on $\phi$ guarantees the positivity of $\phi(\varepsilon)$ and $(1-\varepsilon^2\phi(\varepsilon))$ so that the problem remains well-posed uniformly in $\varepsilon$. We make the same simple choice of $\phi$ as in \cite{jin2000uniformly}:
\begin{equation}\label{5-5}
\phi(\varepsilon)=\min \left\{1,\frac{1}{\varepsilon^2}\right\}.
\end{equation}

Now we apply an IMEX-RK scheme to system (\ref{5-1}) where $(f_1,g_1)^T$ is evaluated explicitly and $(f_2,g_2)^T$ implicitly, then we obtain
\begin{subequations}\label{5-6}
\begin{align}
&\hat{\bold r}^{n+1}=\hat{\bold r}^n+\Delta t\sum_{k=1}^s\tilde{b}_kf_1(\hat{\bold R}^k,\hat{\bold J}^k)+\Delta t\sum_{k=1}^sb_kf_2(\hat{\bold R}^k,\hat{\bold S}^k),\\
&\hat{\bold j}^{n+1}=\hat{\bold j}^n+\Delta t\sum_{k=1}^s\tilde{b}_kg_1(\hat{\bold R}^k)+\Delta t\sum_{k=1}^sb_kg_2(\hat{\bold R}^k,\hat{\bold J}^k),\\
&\hat{\bold s}^{n+1}=-\frac{1}{\pi}\log|x|\ast\hat{\boldsymbol \rho}^{n+1},
\end{align}
\end{subequations}
where the internal stages are
\begin{subequations}\label{5-7}
\begin{align}
&\hat{\bold R}^k=\hat{\bold r}^n+\Delta t\sum_{l=1}^{k-1}\tilde{a}_{kl}f_1(\hat{\bold R}^l,\hat{\bold J}^l)+\Delta t\sum_{l=1}^ka_{kl}f_2(\hat{\bold R}^l,\hat{\bold S}^l),\\
&\hat{\bold J}^k=\hat{\bold j}^n+\Delta t\sum_{l=1}^{k-1}\tilde{a}_{kl}g_1(\hat{\bold R}^l)+\Delta t\sum_{l=1}^ka_{kl}g_2(\hat{\bold R}^l,\hat{\bold J}^l),\\
&\hat{\bold S}^k=-\frac{1}{\pi}\log|x|\ast\hat{\boldsymbol \Rho}^k.
\end{align}
\end{subequations}

It is obvious that the scheme is characterized by the $s\times s$ matrices 
\begin{equation}\label{5-8}
\tilde A=(\tilde a_{ij}),A=(a_{ij})
\end{equation}
and the vectors $\tilde b, b\in\mathbb R^s$, which can be represented by a double table tableau in the usual Butcher notation
\begin{center}
\begin{TAB}(r,0.5cm,0.5cm)[5pt]{c|c}{c|c}
$\tilde c$& $\tilde A$\\
& $\tilde b^T$\\
\end{TAB}, \ \ \ \ \ \ \ \ \
\begin{TAB}(r,0.5cm,0.5cm)[5pt]{c|c}{c|c}
$c$& $A$\\
& $b^T$\\
\end{TAB}.
\end{center}
The coefficients $\tilde c$ and $c$ depend on the explicit part of the scheme:
\begin{equation}\label{5-9}
\tilde c_i=\sum_{j=1}^{i-1}\tilde a_{ij}, \ \ c_i=\sum_{j=1}^ia_{ij}.
\end{equation}

In the literature, there are two main different types of IMEX R-K schemes characterized by the structure of the matrix $A$. We are interested in the IMEX-RK method of type $A$ (see \cite{boscarino2013implicit}) where the matrix $A$ is invertible, so that the implicit parts become more amenable.

As an example, we report the SSP(3,3,2) scheme, which is a second order IMEX scheme we are going to use in Section \ref{Secn}

\begin{equation}
\begin{tabular}{c|c c c} 
0&0&0&0\\ 
1/2&1/2&0&0\\
1&1/2&1/2&0\\
\hline
 &1/3&1/3&1/3
\end{tabular},\qquad
\begin{tabular}{c|c c c} 
1/4&1/4&0&0\\ 
1/4&0&1/4&0\\
1&1/3&1/3&1/3\\
\hline
 &1/3&1/3&1/3
\end{tabular}.
\label{SSP332}
\end{equation}

To obtain $\hat{\bold R}^k$ in each internal stage of (\ref{5-7}), one needs $\hat{\boldsymbol\Rho}^k$ and $\hat{\bold S}^k$ in the implicit part $f_2(\hat{\bold R}^k,\hat{\bold S}^k)$. These quantities can be obtained explicitly by the following procedure. 

Suppose one has computed $\hat{\bold R}^l$ and $\hat{\bold S}^l$ for $l=1,\cdots, k-1$, then according to (\ref{5-7}a)
\begin{equation}\label{5-10}
\begin{aligned}
\hat{\bold R}^k=&\hat{\bold r}^n+\Delta t\sum_{l=1}^{k-1}\left(\tilde{a}_{kl}f_1(\hat{\bold R}^l,\hat{\bold J}^l)+a_{kl}f_2(\hat{\bold R}^l,\hat{\bold S}^l)\right)\\
&+\Delta t a_{kk}\left[\frac{1}{\varepsilon^2}(\bar F(v)\bold M \hat{\boldsymbol \Rho}^k+\bold B^k\hat{\boldsymbol \Rho}^k-\bold M\hat{\bold R}^k-\bold C^k\hat{\bold R}^k)+ \mu\bar F(v)\partial_x(D\tilde{\bold M}\partial_x\hat{\boldsymbol \Rho}^k-\chi\tilde{\bold G}^k\hat{\boldsymbol \Rho}^k)\right]\\
=&\overline{\hat{\bold R}}^{k-1}+\Delta t a_{kk}\left[\frac{1}{\varepsilon^2}(\bar F(v)\bold M \hat{\boldsymbol \Rho}^k+\bold B^k\hat{\boldsymbol \Rho}^k-\bold M\hat{\bold R}^k-\bold C^k\hat{\bold R}^k)+ \mu\bar F(v)\partial_x(D\tilde{\bold M}\partial_x\hat{\boldsymbol \Rho}^k-\chi\tilde{\bold G}^k\hat{\boldsymbol \Rho}^k)\right].\end{aligned}
\end{equation}

Here $\overline{\hat{\bold R}}^{k-1}$ represents all contributions in (\ref{5-10}) from the first $k-1$ stages. Now one takes $\langle \cdot \rangle$ on both sides of (\ref{5-10}) so that $[\bar F(v)\bold M \hat{\boldsymbol \Rho}^k+\bold B^k\hat{\boldsymbol \Rho}^k-\bold M\hat{\bold R}^k-\bold C^k\hat{\bold R}^k]$ is cancelled out on the right hand side and one can approximate $\tilde{\bold G}^k$ by $\tilde{\bold G}^{k-1}$. Now $\hat{\boldsymbol \Rho}^k$ can be obtained from the following diffusion equation in an implicit form:
\begin{equation}\label{5-11}
\hat{\boldsymbol \Rho}^k-\Delta t a_{kk}\mu\partial_x(D\tilde{\bold M}\partial_x\hat{\boldsymbol \Rho}^k-\chi\tilde{\bold G}^{k-1}\hat{\boldsymbol \Rho}^k)=\langle \overline{\hat{\bold R}}^{k-1}\rangle.
\end{equation}
Then it is plugged back to (\ref{5-10}) in order to compute $\hat{\bold R}^k$.

\subsection{The Space Discretization}
Second order accuracy is obtained using an upwind TVD scheme (with minmod slope limiter \cite{leveque1992numerical}) in the explicit transport part and center difference for other second derivatives. During each internal stage (\ref{5-7}),
\begin{subequations}\label{5-12}
\begin{align}
\hat{\bold R}^k_i=&\hat{\bold r}^n_i+\Delta t\sum_{l=1}^{k-1}\tilde a_{kl}\left\{-\frac{v}{2\Delta x}(\hat{\bold J}^l_{i+1}-\hat{\bold J}^l_{i-1})+\frac{v\phi^{1/2}}{2\Delta x}(\hat{\bold R}^l_{i+1}-2\hat{\bold R}^l_i+\hat{\bold R}^l_{i-1})\right.-\frac{v\phi^{1/2}}{4}(\boldsymbol\gamma^l_i-\boldsymbol\gamma^l_{i-1}+\boldsymbol\beta^l_{i+1}-\boldsymbol\beta^l_i)\nonumber\\
&-\frac{\mu}{(\Delta x)^2}\bar F (v)D\tilde{\bold M}\left(\hat{\boldsymbol \Rho}^l_{i+1}-2\hat{\boldsymbol \Rho}^l_{i}+\hat{\boldsymbol \Rho}^l_{i-1}\right)\left.+\frac{\mu}{2\Delta x}\bar F(v)\chi\left(\tilde{\bold G}_{i+1}^l\hat{\boldsymbol \Rho}_{i+1}^l-\tilde{\bold G}_{i-1}^l\hat{\boldsymbol \Rho}_{i-1}^l\right)\right\}\nonumber\\
&+\Delta t \sum_{l=1}^ka_{kl}\left\{\frac{1}{\varepsilon^2}\left(\bar F(v)\bold M \hat{\boldsymbol \Rho}^l_i+\bold B_i^l\hat{\boldsymbol \Rho}_i^l-\bold M\hat{\bold R}_i^l-\bold C_i^l\hat{\bold R}_i^l\right)\right.\nonumber\\
&+\frac{\mu}{(\Delta x)^2}\bar F(v)D\tilde{\bold M}\left(\hat{\boldsymbol \Rho}^l_{i+1}-2\hat{\boldsymbol \Rho}^l_{i}+\hat{\boldsymbol \Rho}^l_{i-1}\right)\left.-\frac{\mu}{2\Delta x}\bar F(v)\chi\left(\tilde{\bold G}_{i+1}^l\hat{\boldsymbol \Rho}_{i+1}^l-\tilde{\bold G}_{i-1}^l\hat{\boldsymbol \Rho}_{i-1}^l\right)\right\},\\
\hat{\bold J}^k_i=&\hat{\bold j}^n_i+\Delta t\sum_{l=1}^{k-1}\tilde a_{kl}\left\{-\frac{v\phi}{2\Delta x}(\hat{\bold R}^l_{i+1}-\hat{\bold R}^l_{i-1})+\frac{v\phi^{1/2}}{2\Delta x}(\hat{\bold J}^l_{i+1}-2\hat{\bold J}^l_i+\hat{\bold J}^l_{i-1})-\frac{v\phi}{4}(\boldsymbol\gamma^l_i-\boldsymbol\gamma^l_{i-1}-\boldsymbol\beta^l_{i+1}+\boldsymbol\beta^l_i)\right\}\nonumber\\
&-\Delta t\sum_{l=1}^ka_{kl}\frac{1}{\varepsilon^2}\left\{(1-\varepsilon^2\phi)v\frac{\hat{\bold R}_{i+1}^l-\hat{\bold R}_{i-1}^l}{2\Delta x}-\bold E_i^l\hat{\boldsymbol \Rho}_i^l+\bold M\hat{\bold J}_i^l+\bold C_i^l\hat{\bold J}_i^l\right\},
\end{align}
\end{subequations}
where
\begin{subequations}\label{5-13}
\begin{align}
\boldsymbol \gamma^l_i=&\frac{1}{\Delta x}\text{minmod}\left(\hat{\bold R}^l_{i+1}+\phi^{-1/2}\hat{\bold J}^l_{i+1}-\hat{\bold R}^l_i-\phi^{-1/2}\hat{\bold J}^l_i,\right. \\
&\left.\hat{\bold R}^l_{i}+\phi^{-1/2}\hat{\bold J}^l_{i}-\hat{\bold R}^l_{i-1}-\phi^{-1/2}\hat{\bold J}^l_{i-1}\right),\\
\boldsymbol \beta^l_i=&\frac{1}{\Delta x}\text{minmod}\left(\hat{\bold R}^l_{i+1}-\phi^{-1/2}\hat{\bold J}^l_{i+1}-\hat{\bold R}^l_i+\phi^{-1/2}\hat{\bold J}^l_i,\right.\\
&\left.\hat{\bold R}^l_{i}-\phi^{-1/2}\hat{\bold J}^l_{i}-\hat{\bold R}^l_{i-1}+\phi^{-1/2}\hat{\bold J}^l_{i-1}\right).
\end{align}
\end{subequations}

Since $\hat{\boldsymbol \Rho}^k$ can be obtained explicitly by (\ref{5-11}), we can fully discretize $\hat{\boldsymbol \Rho}^k_i$ as following:
\begin{equation}\label{5-14}
\hat{\boldsymbol \Rho}_i^k-\Delta t a_{kk}\frac{\mu}{(\Delta x)^2}\left[D\tilde{\bold M}(\hat{\boldsymbol \Rho}_{i-1}^k-2\hat{\boldsymbol \Rho}_{i}^k+\hat{\boldsymbol \Rho}_{i+1}^k)-\chi\left(\tilde{\bold G}_{i+\frac{1}{2}}^{k-1}(\hat{\boldsymbol \Rho}_{i+1}^k-\hat{\boldsymbol \Rho}_{i}^k)-\tilde{\bold G}_{i-\frac{1}{2}}^{k-1}(\hat{\boldsymbol \Rho}_{i}^k-\hat{\boldsymbol \Rho}_{i-1}^k)\right)\right]=\langle \overline{\hat{\bold R}}_i^{k-1}\rangle.
\end{equation}

Then using (\ref{5-14}), the fully discretized $\hat{\bold R}^k_i$ is obtained and subsequently $\hat{\bold J}^k_i$ from the following:
\begin{subequations}\label{5-15}
\begin{align}
&\left(\bold I+\frac{a_{kk}\Delta t}{\varepsilon^2}(\bold M+\bold C_i^k)\right)\hat{\bold R}^k_i\nonumber\\
=&\hat{\bold r}^n_i+\Delta t\sum_{l=1}^{k-1}\tilde a_{kl}\left\{-\frac{v}{2\Delta x}(\hat{\bold J}^l_{i+1}-\hat{\bold J}^l_{i-1})\right.+\frac{v\phi^{1/2}}{2\Delta x}(\hat{\bold R}^l_{i+1}-2\hat{\bold R}^l_i+\hat{\bold R}^l_{i-1})-\frac{v\phi^{1/2}}{4}(\boldsymbol\gamma^l_i-\boldsymbol\gamma^l_{i-1}+\boldsymbol\beta^l_{i+1}-\boldsymbol\beta^l_i)\nonumber\\
&-\frac{\mu}{(\Delta x)^2}\bar F (v)D\tilde{\bold M}\left(\hat{\boldsymbol \Rho}^l_{i+1}-2\hat{\boldsymbol \Rho}^l_{i}+\hat{\boldsymbol \Rho}^l_{i-1}\right)\left.+\frac{\mu}{2\Delta x}\bar F(v)\chi\left(\tilde{\bold G}_{i+1}^l\hat{\boldsymbol \Rho}_{i+1}^l-\tilde{\bold G}_{i-1}^l\hat{\boldsymbol \Rho}_{i-1}^l\right)\right\}\nonumber\\
&+\Delta t \sum_{l=1}^{k-1}a_{kl}\left\{\frac{1}{\varepsilon^2}\left[\bar F(v)\bold M \hat{\boldsymbol \Rho}^l_i+\bold B_i^l\hat{\boldsymbol \Rho}_i^l-\bold M\hat{\bold R}_i^l-\bold C_i^l\hat{\bold R}_i^l\right]\right.\left.+\frac{\mu}{(\Delta x)^2}\bar F(v)D\tilde{\bold M}\left(\hat{\boldsymbol \Rho}^l_{i+1}-2\hat{\boldsymbol \Rho}^l_{i}+\hat{\boldsymbol \Rho}^l_{i-1}\right)\right.\nonumber\\
&\left.-\frac{\mu}{2\Delta x}\bar F(v)\chi\left(\tilde{\bold G}_{i+1}^l\hat{\boldsymbol \Rho}_{i+1}^l-\tilde{\bold G}_{i-1}^l\hat{\boldsymbol \Rho}_{i-1}^l\right)\right\}+\Delta t a_{kk}\left\{\frac{1}{\varepsilon^2}\left[\bar F(v)\bold M \hat{\boldsymbol \Rho}^k_i+\bold B_i^k\hat{\boldsymbol \Rho}_i^k\right]\right.\nonumber\\
&\left.+\frac{\mu}{(\Delta x)^2}\bar F(v)D\tilde{\bold M}\left(\hat{\boldsymbol \Rho}^k_{i+1}-2\hat{\boldsymbol \Rho}^k_{i}+\hat{\boldsymbol \Rho}^k_{i-1}\right)\right.\left.-\frac{\mu}{2\Delta x}\bar F(v)\chi\left(\tilde{\bold G}_{i+1}^k\hat{\boldsymbol \Rho}_{i+1}^k-\tilde{\bold G}_{i-1}^k\hat{\boldsymbol \Rho}_{i-1}^k\right)\right\},\\
&\left(1+\frac{a_{kk}\Delta t}{\varepsilon^2}(\bold M +\bold C_i^k)\right)\hat{\bold J}^k_i\nonumber\\
=&\hat{\bold j}^n_i+\Delta t\sum_{l=1}^{k-1}\tilde a_{kl}\left\{-\frac{v\phi}{2\Delta x}(\hat{\bold R}^l_{i+1}-\hat{\bold R}^l_{i-1})\right.+\frac{v\phi^{1/2}}{2\Delta x}(\hat{\bold J}^l_{i+1}-2\hat{\bold J}^l_i+\hat{\bold J}^l_{i-1})\left.-\frac{v\phi}{4}(\boldsymbol\gamma^l_i-\boldsymbol\gamma^l_{i-1}+\boldsymbol\beta^l_{i+1}-\boldsymbol\beta^l_i)\right\}\nonumber\\
&-\Delta t\sum_{l=1}^{k-1}a_{kl}\frac{1}{\varepsilon^2}\left\{(1-\varepsilon^2\phi)v\frac{\hat{\bold R}_{i+1}^l-\hat{\bold R}_{i-1}^l}{2\Delta x}-\bold E_i^l\hat{\boldsymbol \Rho}_i^l+\bold M \hat{\bold J}_i^l+\bold C_i^l\hat{\bold J}_i^l\right\}\nonumber\\
&-\Delta ta_{kk}\frac{1}{\varepsilon^2}\left\{(1-\varepsilon^2\phi)v\frac{\hat{\bold R}_{i+1}^k-\hat{\bold R}_{i-1}^k}{2\Delta x}-\bold E_i^k\hat{\boldsymbol \Rho}_i^k\right\},
\end{align}
\end{subequations}
In the above $\left(1+\frac{a_{kk}\Delta t}{\varepsilon^2}(\bold M +\bold C_i^k)\right)$ is symmetric positive definite, thus invertible. After calculating all $\hat{\bold R}^k_i$ and $\hat{\bold J}^k_i$ for $k=1,\cdots, s$, we can update $\hat{\bold r}^{n+1}_i$ and $\hat{\bold j}^{n+1}_i$ in (\ref{5-6}),
\begin{subequations}\label{5-16}
\begin{align}
\hat{\bold r}^{n+1}_i=&\hat{\bold r}^n_i+\Delta t\sum_{k=1}^{s}\tilde b_{k}\left\{-\frac{v}{2\Delta x}(\hat{\bold J}^k_{i+1}-\hat{\bold J}^k_{i-1})+\frac{v\phi^{1/2}}{2\Delta x}(\hat{\bold R}^k_{i+1}-2\hat{\bold R}^k_i+\hat{\bold R}^k_{i-1})\right.\nonumber\\
&-\frac{v\phi^{1/2}}{4}(\boldsymbol\gamma^k_i-\boldsymbol\gamma^k_{i-1}+\boldsymbol\beta^k_{i+1}-\boldsymbol\beta^k_i)-\frac{\mu}{(\Delta x)^2}\bar F (v)D\tilde{\bold M}\left(\hat{\boldsymbol \Rho}^k_{i+1}-2\hat{\boldsymbol \Rho}^k_{i}+\hat{\boldsymbol \Rho}^k_{i-1}\right)\nonumber\\
&\left.+\frac{\mu}{2\Delta x}\bar F(v)\chi\left(\tilde{\bold G}_{i+1}^k\hat{\boldsymbol \Rho}_{i+1}^k-\tilde{\bold G}_{i-1}^k\hat{\boldsymbol \Rho}_{i-1}^k\right)\right\}\nonumber\\
&+\Delta t \sum_{k=1}^sb_{k}\left\{\frac{1}{\varepsilon^2}\left[\bar F(v)\bold M \hat{\boldsymbol \Rho}^k_i+\bold B_i^k\hat{\boldsymbol \Rho}_i^k-\bold M\hat{\bold R}_i^k-\bold C_i^k\hat{\bold R}_i^k\right]\right.\nonumber\\
&\left.+\frac{\mu}{(\Delta x)^2}\bar F(v)D\tilde{\bold M}\left(\hat{\boldsymbol \Rho}^k_{i+1}-2\hat{\boldsymbol \Rho}^k_{i}+\hat{\boldsymbol \Rho}^k_{i-1}\right)-\frac{\mu}{2\Delta x}\bar F(v)\chi\left(\tilde{\bold G}_{i+1}^k\hat{\boldsymbol \Rho}_{i+1}^k-\tilde{\bold G}_{i-1}^k\hat{\boldsymbol \Rho}_{i-1}^k\right)\right\},\\
\hat{\bold j}^{n+1}_i=&\hat{\bold j}^n_i+\Delta t\sum_{k=1}^{s}\tilde b_{k}\left\{-\frac{v\phi}{2\Delta x}(\hat{\bold R}^k_{i+1}-\hat{\bold R}^k_{i-1})+\frac{v\phi^{1/2}}{2\Delta x}(\hat{\bold J}^k_{i+1}-2\hat{\bold J}^k_i+\hat{\bold J}^k_{i-1})\right.\nonumber\\
&\left.-\frac{v\phi}{4}(\boldsymbol\gamma^k_i-\boldsymbol\gamma^k_{i-1}-\boldsymbol\beta^k_{i+1}+\boldsymbol\beta^k_i)\right\}-\Delta t\sum_{k=1}^sb_{k}\frac{1}{\varepsilon^2}\left\{(1-\varepsilon^2\phi)v\frac{\hat{\bold R}_{i+1}^k-\hat{\bold R}_{i-1}^k}{2\Delta x}\right.\nonumber\\
&\left.-\bold E_i^k\hat{\boldsymbol \Rho}_i^k+\bold M\hat{\bold J}_i^k+\bold C_i^k\hat{\bold J}_i^k\right\},
\end{align}
\end{subequations}
where $\boldsymbol \gamma^k_i$ and $\boldsymbol \beta^k_i$ are defined the same as in (\ref{5-13}).

Following \cite{boscarino2013implicit} we choose 
\begin{equation}\label{5-17}
\mu=\exp(-\varepsilon^2/\Delta x).
\end{equation}
Thus, for large value of $\varepsilon$, (e.g., $\varepsilon=1$), we could avoid the loss of accuracy caused by adding and subtracting the penalty term; for very small value of $\varepsilon$, (e.g., $\varepsilon\to 0$), $\mu\to 1$.

\begin{remark}
The full discrete scheme is obtained using the Gauss-Legendre quadrature nodes for the velocity discretization. Finally, to get the boundary conditions for $\hat{\bold r},\hat{\bold j}$ and $\hat{\bold s}$, we refer to \cite{jin2000uniformly} for details.
\end{remark}

\subsection{The sAP property}
Denote
\begin{subequations}\label{5-18}
\begin{align}
f_1(\hat{\bold R}^l_i,\hat{\bold J}^l_i)=&-\frac{v}{2\Delta x}(\hat{\bold J}^l_{i+1}-\hat{\bold J}^l_{i-1})+\frac{v\phi^{1/2}}{2\Delta x}(\hat{\bold R}^l_{i+1}-2\hat{\bold R}^l_i+\hat{\bold R}^l_{i-1})\nonumber\\
&-\frac{v\phi^{1/2}}{4}(\boldsymbol\gamma^l_i-\boldsymbol\gamma^l_{i-1}+\boldsymbol\beta^l_{i+1}-\boldsymbol\beta^l_i)-\frac{\mu}{(\Delta x)^2}\bar F (v)D\tilde{\bold M}\left(\hat{\boldsymbol \Rho}^l_{i+1}-2\hat{\boldsymbol \Rho}^l_{i}+\hat{\boldsymbol \Rho}^l_{i-1}\right)\nonumber\\
&+\frac{\mu}{2\Delta x}\bar F(v)\chi\left(\tilde{\bold G}_{i+1}^l\hat{\boldsymbol \Rho}_{i+1}^l-\tilde{\bold G}_{i-1}^l\hat{\boldsymbol \Rho}_{i-1}^l\right),\\
f_2(\hat{\bold R}^l_i)=&\frac{1}{\varepsilon^2}\left[\bar F(v)\bold M \hat{\boldsymbol \Rho}^l_i+\bold B_i^l\hat{\boldsymbol \Rho}_i^l-\bold M\hat{\bold R}_i^l-\bold C_i^l\hat{\bold R}_i^l\right]\nonumber\\
&+\frac{\mu}{(\Delta x)^2}\bar F(v)D\tilde{\bold M}\left(\hat{\boldsymbol \Rho}^l_{i+1}-2\hat{\boldsymbol \Rho}^l_{i}+\hat{\boldsymbol \Rho}^l_{i-1}\right)-\frac{\mu}{2\Delta x}\bar F(v)\chi\left(\tilde{\bold G}_{i+1}^l\hat{\boldsymbol \Rho}_{i+1}^l-\tilde{\bold G}_{i-1}^l\hat{\boldsymbol \Rho}_{i-1}^l\right),\\
g_1(\hat{\bold R}^l_i)=&-\frac{v\phi}{2\Delta x}(\hat{\bold R}^l_{i+1}-\hat{\bold R}^l_{i-1})+\frac{v\phi^{1/2}}{2\Delta x}(\hat{\bold J}^l_{i+1}-2\hat{\bold J}^l_i+\hat{\bold J}^l_{i-1})-\frac{v\phi}{4}(\boldsymbol\gamma^l_i-\boldsymbol\gamma^l_{i-1}-\boldsymbol\beta^l_{i+1}+\boldsymbol\beta^l_i),\\
g_2(\hat{\bold R}^l_i,\hat{\bold J}^l_i)=&\frac{1}{\varepsilon^2}\left[(1-\varepsilon^2\phi)v\frac{\hat{\bold R}_{i+1}^l-\hat{\bold R}_{i-1}^l}{2\Delta x}-\bold E_i^l\hat{\boldsymbol \Rho}_i^l+\bold M\hat{\bold J}_i^l+\bold C_i^l\hat{\bold J}_i^l\right].
\end{align}
\end{subequations}

From (\ref{5-15}) we have
\begin{subequations}\label{5-19}
\begin{align}
\begin{pmatrix}
\hat{\bold R}_i^1\\
\hat{\bold R}_i^2\\
\vdots\\
\hat{\bold R}_i^s
\end{pmatrix}=&
\begin{pmatrix}
\hat{\bold r}_i^n\\
\hat{\bold r}_i^n\\
\vdots\\
\hat{\bold r}_i^n
\end{pmatrix}+\Delta t\begin{pmatrix}
0\\
\tilde a_{21}f_1(\hat{\bold R}_i^1,\hat{\bold J}_i^1)\\
\vdots\\
\sum_{l=1}^{s-1}\tilde a_{sl}f_1(\hat{\bold R}_i^{l},\hat{\bold J}_i^{l})
\end{pmatrix}+\Delta t\bold A
\begin{pmatrix}
f_2(\hat{\bold R}_i^1)\\
f_2(\hat{\bold R}_i^2)\\
\vdots\\
f_2(\hat{\bold R}_i^{s})
\end{pmatrix},\\
\begin{pmatrix}
\hat{\bold J}_i^1\\
\hat{\bold J}_i^2\\
\vdots\\
\hat{\bold J}_i^s
\end{pmatrix}=&
\begin{pmatrix}
\hat{\bold j}_i^n\\
\hat{\bold j}_i^n\\
\vdots\\
\hat{\bold j}_i^n
\end{pmatrix}+\Delta t\begin{pmatrix}
0\\
\tilde a_{21}g_1(\hat{\bold R}_i^1)\\
\vdots\\
\sum_{l=1}^{s-1}\tilde a_{sl}g_1(\hat{\bold R}_i^{l})
\end{pmatrix}+\Delta t\bold A\begin{pmatrix}
g_2(\hat{\bold R}_i^1,\hat{\bold J}_i^1)\\
g_2(\hat{\bold R}_i^2,\hat{\bold J}_i^2)\\
\vdots\\
g_2(\hat{\bold R}_i^{s},\hat{\bold J}_i^s)
\end{pmatrix},
\end{align}
\end{subequations}
where 
\begin{equation}\label{5-20}
\bold A_{K(i-1)+1:Ki,K(j-1)+1:Kj}=A_{i,j}\bold I_{K\times K}, \ \ \ \bold I_{K\times K} \ \ \text{is} \ K\times K \ \text{identity matrix},
\end{equation}
and $A$ is defined in (\ref{5-8}). Denote $\bold W$ as the inverse matrix of $\bold A$, then we obtain from (\ref{5-19})
\begin{subequations}\label{5-21}
\begin{align}
&\Delta t
\begin{pmatrix}
f_2(\hat{\bold R}_i^1)\\
f_2(\hat{\bold R}_i^2)\\
\vdots\\
f_2(\hat{\bold R}_i^{s})
\end{pmatrix}=\bold W\left[\begin{pmatrix}
\hat{\bold R}_i^1\\
\hat{\bold R}_i^2\\
\vdots\\
\hat{\bold R}_i^s
\end{pmatrix}-
\begin{pmatrix}
\hat{\bold r}_i^n\\
\hat{\bold r}_i^n\\
\vdots\\
\hat{\bold r}_i^n
\end{pmatrix}-\Delta t\begin{pmatrix}
0\\
\tilde a_{21}f_1(\hat{\bold R}_i^1,\hat{\bold J}_i^1)\\
\vdots\\
\sum_{l=1}^{s-1}\tilde a_{sl}f_1(\hat{\bold R}_i^{l},\hat{\bold J}_i^{l})
\end{pmatrix}\right],\\
&\Delta t\begin{pmatrix}
g_2(\hat{\bold R}_i^1,\hat{\bold J}_i^1)\\
g_2(\hat{\bold R}_i^2,\hat{\bold J}_i^2)\\
\vdots\\
g_2(\hat{\bold R}_i^{s},\hat{\bold J}_i^s)
\end{pmatrix}=\bold W\left[\begin{pmatrix}
\hat{\bold J}_i^1\\
\hat{\bold J}_i^2\\
\vdots\\
\hat{\bold J}_i^s
\end{pmatrix}-
\begin{pmatrix}
\hat{\bold j}_i^n\\
\hat{\bold j}_i^n\\
\vdots\\
\hat{\bold j}_i^n
\end{pmatrix}-\Delta t\begin{pmatrix}
0\\
\tilde a_{21}g_1(\hat{\bold R}_i^1)\\
\vdots\\
\sum_{l=1}^{s-1}\tilde a_{sl}g_1(\hat{\bold R}_i^{l})
\end{pmatrix}\right].
\end{align}
\end{subequations}

Since $\bold W$ has the same structure as $\bold A$, $\bold W$ should be a lower  triangular matrix with entries
\begin{equation}\label{51}
\bold W_{K(i-1)+1:Ki,K(j-1)+1:Kj}=\omega_{i,j}\bold I_{K\times K},
\end{equation}
where $W=(\omega_{i,j})$ is the inverse of the lower triangular matrix $A$ in (\ref{5-8}).

Then one can rewrite (\ref{5-21}) as 
\begin{subequations}\label{5-23}
\begin{align}
&\Delta tf_2(\hat{\bold R}_i^k)=\sum_{l=1}^k\omega_{kl}\left[\hat{\bold R}_i^l-\hat{\bold r}_i^n-\Delta t\sum_{l=1}^{k-1}\tilde{a}_{kl}f_1(\hat{\bold R}_i^l,\hat{\bold J}_i^l)\right],\\
&\Delta tg_2(\hat{\bold R}_i^k,\hat{\bold J}_i^k)=\sum_{l=1}^k\omega_{kl}\left[\hat{\bold J}_i^l-\hat{\bold j}_i^n-\Delta t\sum_{l=1}^{k-1}\tilde a_{kl}g_1(\hat{\bold R}_i^l)\right].
\end{align}
\end{subequations}
More explicitly,
\begin{subequations}\label{5-24}
\begin{align}
&\Delta t\left\{\frac{1}{\varepsilon^2}\left[\bar F(v)\bold M \hat{\boldsymbol \Rho}^k_i+\bold B_i^k\hat{\boldsymbol \Rho}_i^k-\bold M\hat{\bold R}_i^k-\bold C_i^k\hat{\bold R}_i^k\right]\right.\nonumber\\
&\left.+\frac{\mu}{(\Delta x)^2}\bar F(v)D\tilde{\bold M}\left(\hat{\boldsymbol \Rho}^k_{i+1}-2\hat{\boldsymbol \Rho}^k_{i}+\hat{\boldsymbol \Rho}^k_{i-1}\right)-\frac{\mu}{2\Delta x}\bar F(v)\chi\left(\tilde{\bold G}_{i+1}^k\hat{\boldsymbol \Rho}_{i+1}^k-\tilde{\bold G}_{i-1}^k\hat{\boldsymbol \Rho}_{i-1}^k\right)\right\}\nonumber\\
=&\sum_{l=1}^k\omega_{kl}\left\{\hat{\bold R}_i^l-\hat{\bold r}_i^n-\Delta t\sum_{l=1}^{k-1}\tilde{a}_{kl}\left[-\frac{v}{2\Delta x}(\hat{\bold J}^l_{i+1}-\hat{\bold J}^l_{i-1})+\frac{v\phi^{1/2}}{2\Delta x}(\hat{\bold R}^l_{i+1}-2\hat{\bold R}^l_i+\hat{\bold R}^l_{i-1})\right.\right.\nonumber\\
&-\frac{v\phi^{1/2}}{4}(\boldsymbol\gamma^l_i-\boldsymbol\gamma^l_{i-1}+\boldsymbol\beta^l_{i+1}-\boldsymbol\beta^l_i)-\frac{\mu}{(\Delta x)^2}\bar F (v)D\tilde{\bold M}\left(\hat{\boldsymbol \Rho}^l_{i+1}-2\hat{\boldsymbol \Rho}^l_{i}+\hat{\boldsymbol \Rho}^l_{i-1}\right)\nonumber\\
&\left.+\frac{\mu}{2\Delta x}\bar F(v)\chi\left(\tilde{\bold G}_{i+1}^l\hat{\boldsymbol \Rho}_{i+1}^l-\tilde{\bold G}_{i-1}^l\hat{\boldsymbol \Rho}_{i-1}^l\right)\right\},\\
&\Delta t\left\{\frac{1}{\varepsilon^2}\left[(1-\varepsilon^2\phi)v\frac{\hat{\bold R}_{i+1}^k-\hat{\bold R}_{i-1}^k}{2\Delta x}-\bold E_i^k\hat{\boldsymbol \Rho}_i^k+\bold M\hat{\bold J}_i^k+\bold C_i^k\hat{\bold J}_i^k\right]\right\}\nonumber\\
=&\sum_{l=1}^k\omega_{kl}\left\{\hat{\bold J}_i^l-\hat{\bold j}_i^n-\Delta t \sum_{l=1}^{k-1}\tilde a_{kl}\left[-\frac{v\phi}{2\Delta x}(\hat{\bold R}^l_{i+1}-\hat{\bold R}^l_{i-1})+\frac{v\phi^{1/2}}{2\Delta x}(\hat{\bold J}^l_{i+1}-2\hat{\bold J}^l_i+\hat{\bold J}^l_{i-1})\right.\right.\nonumber\\
&\left.\left.-\frac{v\phi}{4}(\boldsymbol\gamma^l_i-\boldsymbol\gamma^l_{i-1}-\boldsymbol\beta^l_{i+1}+\boldsymbol\beta^l_i)\right]\right\}.
\end{align}
\end{subequations}
Thus, setting $\varepsilon\to 0$, since $\bold M+\bold C_i^k$ is non-singular, one obtains
\begin{subequations}\label{5-25}
\begin{align}
\hat{\bold R}_i^k=&(\bold M+\bold C_i^k)^{-1}(\bar F(v)\bold M+\bold B_i^k)\hat{\boldsymbol \Rho}_i^k=\bar F(v)\hat{\boldsymbol \Rho}_i^k+O(\varepsilon),\\
\hat{\bold J}_i^k=&(\bold M+\bold C_i^k)^{-1}(\bold E_i^k\hat{\boldsymbol \Rho}_i^k-v\frac{\hat{\bold R}_{i+1}^k-\hat{\bold R}_{i-1}^k}{2\Delta x})=\bold M^{-1}\bold E_i^k\hat{\boldsymbol \Rho}_i^k-v\bold M^{-1}\frac{\hat{\bold R}_{i+1}^k-\hat{\bold R}_{i-1}^k}{2\Delta x}+O(\varepsilon).
\end{align}
\end{subequations}

Inserting this back to (\ref{5-16}a) and letting $\varepsilon\to 0$,
\begin{equation}\label{5-26}
\hat{\bold r}_i^{n+1}=\hat{\bold r}_i^n+\Delta t\sum_{k=1}^s\tilde{b}_k\hat f_1(\hat{\bold R}_i^k)+\Delta t\sum_{k=1}^sb_k\hat f_2(\hat{\bold R}_i^k),
\end{equation}
where
\begin{subequations}\label{5-27}
\begin{align}
\hat f_1(\hat{\bold R}_i^k)=&v^2\bar F(v)\frac{\bold M^{-1}}{4(\Delta x)^2}\left(\hat{\bold R}_{i+2}^k-2\hat{\bold R}_{i}^k+\hat{\bold R}_{i-2}^k\right)-\bar{F}(v)D\frac{\tilde{\bold M}}{(\Delta x)^2}\left(\hat{\boldsymbol \Rho}_{i+1}^k-2\hat{\boldsymbol \Rho}_i^k-\hat{\boldsymbol \Rho}_{i-1}^k\right)\nonumber\\
&-\frac{v^2}{4\Delta x}(\bold M^{-1}{\bold E}_{i+1}^k\hat{\boldsymbol \Rho}_{i+1}^k-\bold M^{-1}{\bold E}_{i-1}^k\hat{\boldsymbol \Rho}_{i-1}^k)+\frac{1}{2\Delta x}\bar{F}(v)\chi(\tilde{\bold G}_{i+1}^k\hat{\boldsymbol \Rho}_{i+1}^k-\tilde{\bold G}_{i-1}^k\hat{\boldsymbol \Rho}_{i-1}^k),\\
\hat f_2(\hat{\bold R}_i^k)=&\frac{1}{(\Delta x)^2}\bar F(v)D\tilde{\bold M}\left(\hat{\boldsymbol \Rho}^k_{i+1}-2\hat{\boldsymbol \Rho}^k_{i}+\hat{\boldsymbol \Rho}^k_{i-1}\right)-\frac{1}{2\Delta x}\bar F(v)\chi\left(\tilde{\bold G}_{i+1}^k\hat{\boldsymbol \Rho}_{i+1}^k-\tilde{\bold G}_{i-1}^k\hat{\boldsymbol \Rho}_{i-1}^k\right).
\end{align}
\end{subequations}

Since the difference between $\bold M^{-1}\left(\hat{\boldsymbol \Rho}^k_{i+1}-2\hat{\boldsymbol \Rho}^k_{i}+\hat{\boldsymbol \Rho}^k_{i-1}\right)$ and $\tilde{\bold M}\left(\hat{\boldsymbol \Rho}^k_{i+1}-2\hat{\boldsymbol \Rho}^k_{i}+\hat{\boldsymbol \Rho}^k_{i-1}\right)$ and the difference between $\frac{1}{\chi}\bold M^{-1}\bold E_i^k\hat{\boldsymbol \Rho}^k_{i}$ and $\tilde{\bold G}_i^k\hat{\boldsymbol \Rho}^k_{i}$ are both spectral small, after integrating over $V^+$, $\hat f_1$ goes to $0$ and one gets
\begin{equation}\label{5-28}
\hat{\boldsymbol \rho}_i^{n+1}=\hat{\boldsymbol \rho}_i^n+\Delta t\sum_{k=1}^sb_k\left[\bar F(v)D\tilde{\bold M}\frac{\hat{\boldsymbol \Rho}^k_{i+1}-2\hat{\boldsymbol \Rho}^k_{i}+\hat{\boldsymbol \Rho}^k_{i-1}}{(\Delta x)^2}-\bar F(v)\chi\frac{\tilde{\bold G}_{i+1}^k\hat{\boldsymbol \Rho}_{i+1}^k-\tilde{\bold G}_{i-1}^k\hat{\boldsymbol \Rho}_{i-1}^k}{2\Delta x}\right]+O((\Delta x)^2),
\end{equation}
where
\begin{equation}\label{5-29}
\begin{aligned}
\hat{\boldsymbol \Rho}_i^k=&\hat{\boldsymbol \rho}_i^n+\Delta t\sum_{l=1}^{k-1}a_{kl}\left[\bar F(v)D\tilde{\bold M}\frac{\hat{\boldsymbol \Rho}^l_{i+1}-2\hat{\boldsymbol \Rho}^l_{i}+\hat{\boldsymbol \Rho}^l_{i-1}}{(\Delta x)^2}-\bar F(v)\chi\frac{\tilde{\bold G}_{i+1}^l\hat{\boldsymbol \Rho}_{i+1}^l-\tilde{\bold G}_{i-1}^l\hat{\boldsymbol \Rho}_{i-1}^l}{2\Delta x}\right]\\
&+\Delta ta_{kk}\left[\bar F(v)D\tilde{\bold M}\frac{\hat{\boldsymbol \Rho}^k_{i+1}-2\hat{\boldsymbol \Rho}^k_{i}+\hat{\boldsymbol \Rho}^k_{i-1}}{(\Delta x)^2}-\bar F(v)\chi\frac{\tilde{\bold G}_{i+1}^{k-1}\hat{\boldsymbol \Rho}_{i+1}^k-\tilde{\bold G}_{i-1}^{k-1}\hat{\boldsymbol \Rho}_{i-1}^k}{2\Delta x}\right],
\end{aligned}
\end{equation}
which is an implicit RK scheme for the projected limiting diffusion equation (\ref{4-8}). Thus, the sAP property \cite{jin2015asymptotic} of the efficient IMEX R-K scheme is formally justified.

\section{Numerical Tests}
\label{Secn}
\subsection{The 1D Nonlocal Deterministic Model}
The following numerical tests are carried out with
$$x\in\Omega=[-1,1],\ v\in V=[-1,1],  \ \alpha=1,$$
$$\bar F(v)=\frac{1}{|V|}\mathbf{1}_V:=\left\{
\begin{aligned}
&\frac{1}{|V|}&&\text{if} \ v\in V\\
&0&&\text{otherwise}
\end{aligned}
\right..$$
The critical mass for the limiting Keller-Segel system given by formula (\ref{1-3}) is 
$$M_c=2\pi.$$
The initial conditions are given by
$$\rho_I(x)=Ce^{-80x^2},  \ \ f_I(x,v)=\rho_I(x) F(v),$$
where $C=C(M)$ is a constant determined by the total mass $M$.

For the deterministic case, we compare our results by the second order IMEX-RK method (\ref{SSP332}) (denoted by SSP2 in the figures) with the results by \cite{carrillo2013asymptotic} (denoted by CY in the figures). For both tests, we set $\Delta x=0.005$. In their numerical tests, the CFL condition is 
$$\Delta t=\max \left\{\frac{\varepsilon\Delta x}{2},\frac{\Delta x^2}{2}\right\}.$$
Obviously, when $\varepsilon$ is small, it suffers from the parabolic CFL condition for the diffusive nature of the Keller-Segel system.

For our IMEX-RK method, the choice of $\Delta t$ is given by
$$\Delta t=\lambda \Delta x, \ \lambda=0.02,$$
which is much bigger than ${\Delta x^2}/{2}$.
\subsubsection{A Super-Critical Mass}
It has been shown in \cite{chalub2004kinetic} that the solution of the kinetic system can converge to the Keller-Segel system weakly in a finite time interval $[0, t^*]$, with $t^*<t_b$. Here $t_b$ is the blow up time of the corresponding Keller-Segel system.

For the Super-Critical case, we set 
\begin{equation}\label{6-1}
M=4\pi>M_c=2\pi, \ t=0.003<t_b\approx 0.0039.
\end{equation}
\begin{figure}[H]
\begin{center}
\includegraphics[scale=0.4]{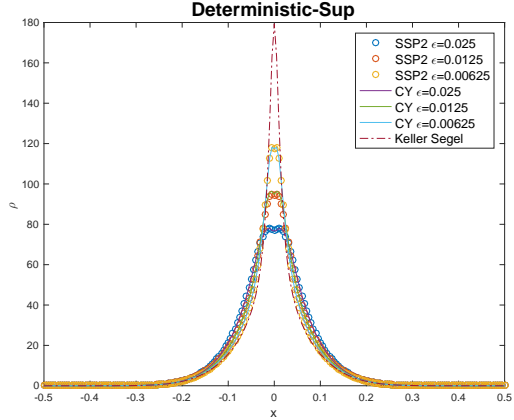}
\end{center}
\caption{The 1D nonlocal deterministic model in the super-critical case. Solid lines are numerical results obtained in \cite{carrillo2013asymptotic} and circles are numerical results obtained by the IMEX-RK method. Dashed line is the numerical solution of the Keller-Segel equations as reference.}
\end{figure}

Figure 1 shows that the solution to the kinetic equation $\rho$ converges to the solution of the Keller-Segel solution $\rho_0$ as $\varepsilon\to 0$ at  time $t=0.003<t_b$. Our IMEX-RK results match very well with the results in \cite{carrillo2013asymptotic}.
\subsubsection{A Sub-Critical Mass}
For the Sub-Critical case, we set 
\begin{equation}\label{6-2}
M=\pi<M_c, \ t=0.1.
\end{equation}
\begin{figure}[H]
\begin{center}
\includegraphics[scale=0.4]{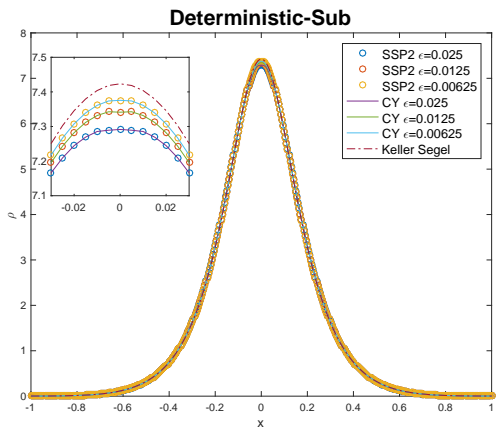}
\end{center}
\caption{The 1D nonlocal deterministic model in the sub-critical case. Solid lines are numerical results obtained in \cite{carrillo2013asymptotic} and circles are numerical results obtained by the IMEX-RK method. Dashed line is the numerical solution of the Keller-Segel equations as reference.}

\end{figure}

Figure 2 shows similar convergence results as the supercritical case for a relatively long time $t=0.1$. Also, good agreements between our new IMEX-RK solutions and the numerical results from \cite{carrillo2013asymptotic} can be observed, even in zoomed in area.

\subsection{The 1D Nonlocal Model with Random Inputs in the Supercritical Case}
Now we let
$$\alpha=1+0.5z, \ z\sim U[-1,1], M=4\pi>\bar M_c\approx2.197\pi.$$
Using the same mesh size as before, we also employ the stochastic collocation method (using 20 quadrature points) as reference solutions. In stochastic collocation, the deterministic solver can be applied directly to a set of selected sample points and then the solution is approximated by interpolation of all sample solutions (see \cite{xiu2010numerical} for a review of stochastic collocation methods). The gPC expansion has been considered only up to $4$th order in our numerical tests. The following are the comparisons of the two methods in mean and standard deviation for the super-critical case with the same initial mass and stopping time in (\ref{6-1}). Given the gPC coefficients $\hat{\rho}_k$ of $\rho$, the mean and standard deviation are calculated as 
$$\mathbb{E}[\rho]\approx\hat{\rho}_1, \ \ \mathbb{S}[\rho]\approx\sqrt{\sum_{k=2}^K\hat{\rho}_k^2}.$$

\subsubsection{The sAP property}
\begin{figure}[H]
\includegraphics[scale=0.4]{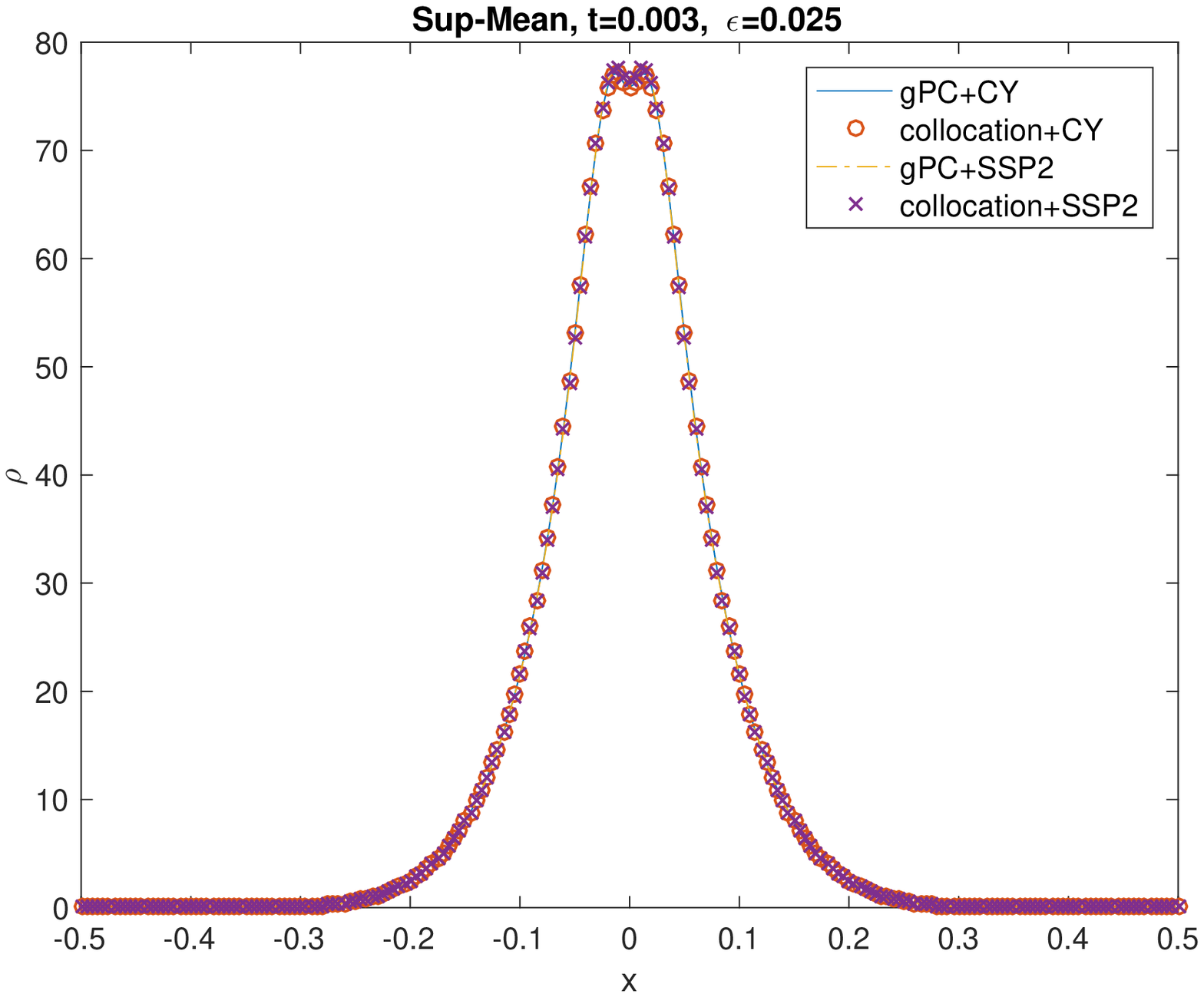}
\includegraphics[scale=0.4]{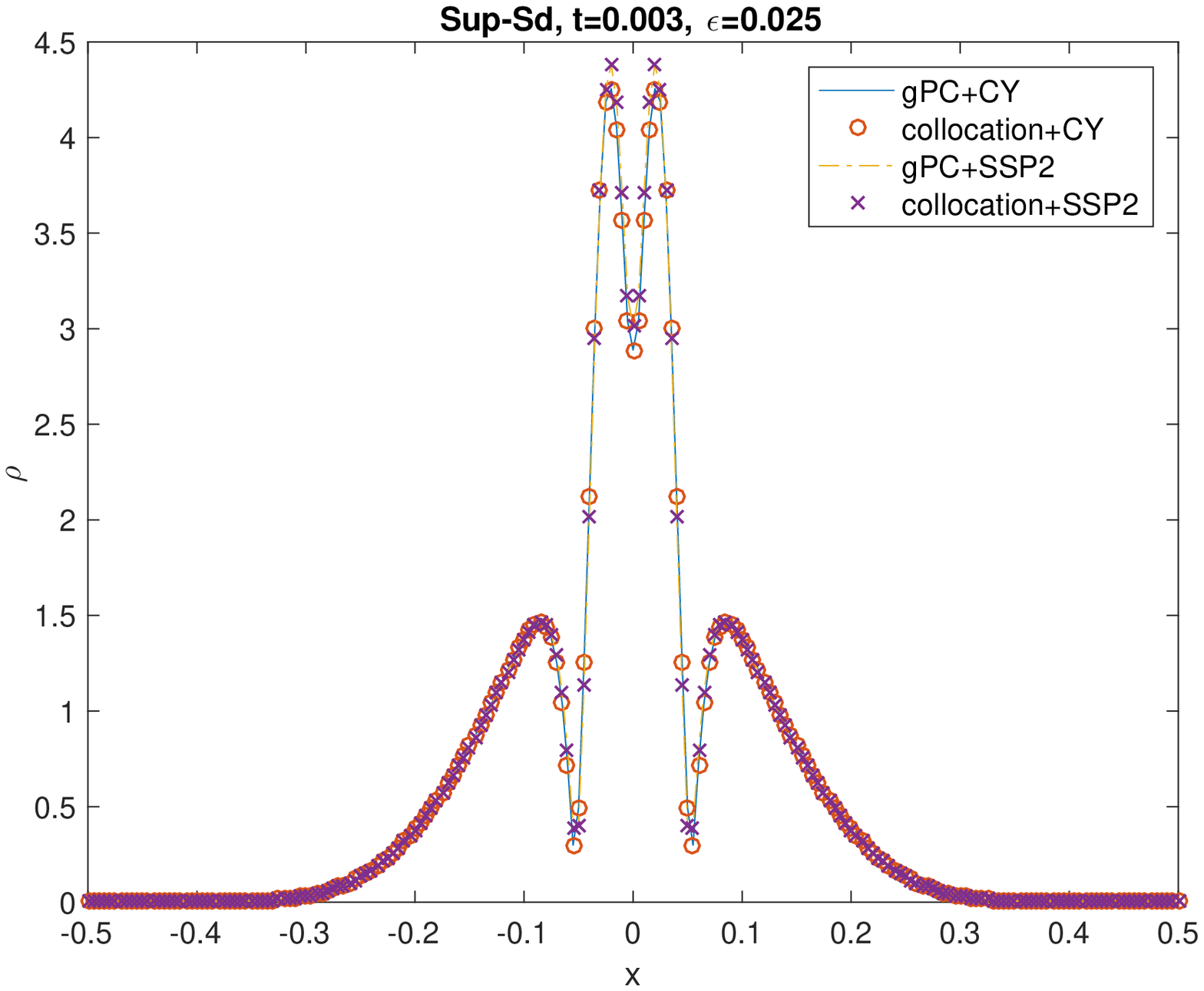}
\includegraphics[scale=0.4]{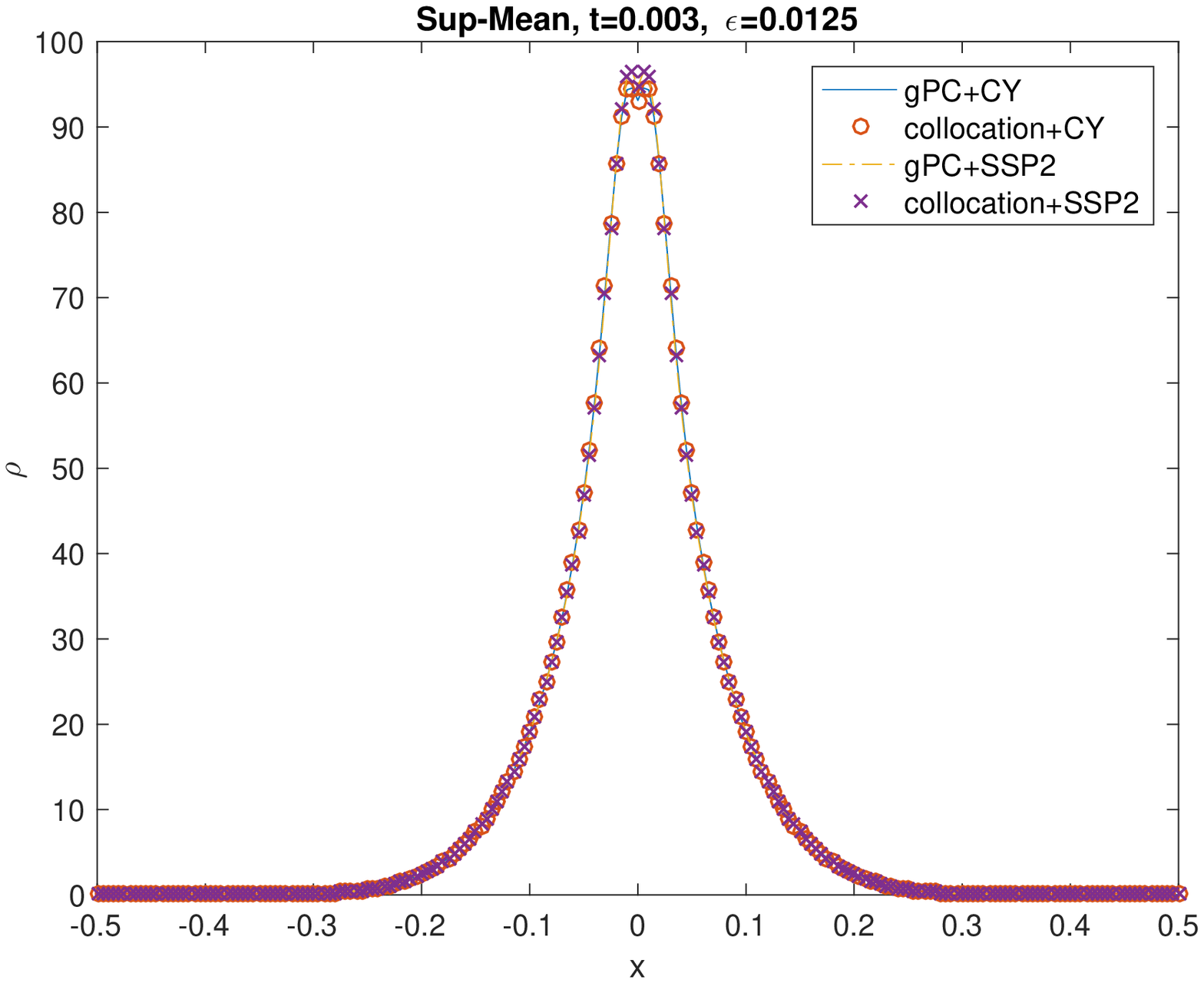}
\includegraphics[scale=0.4]{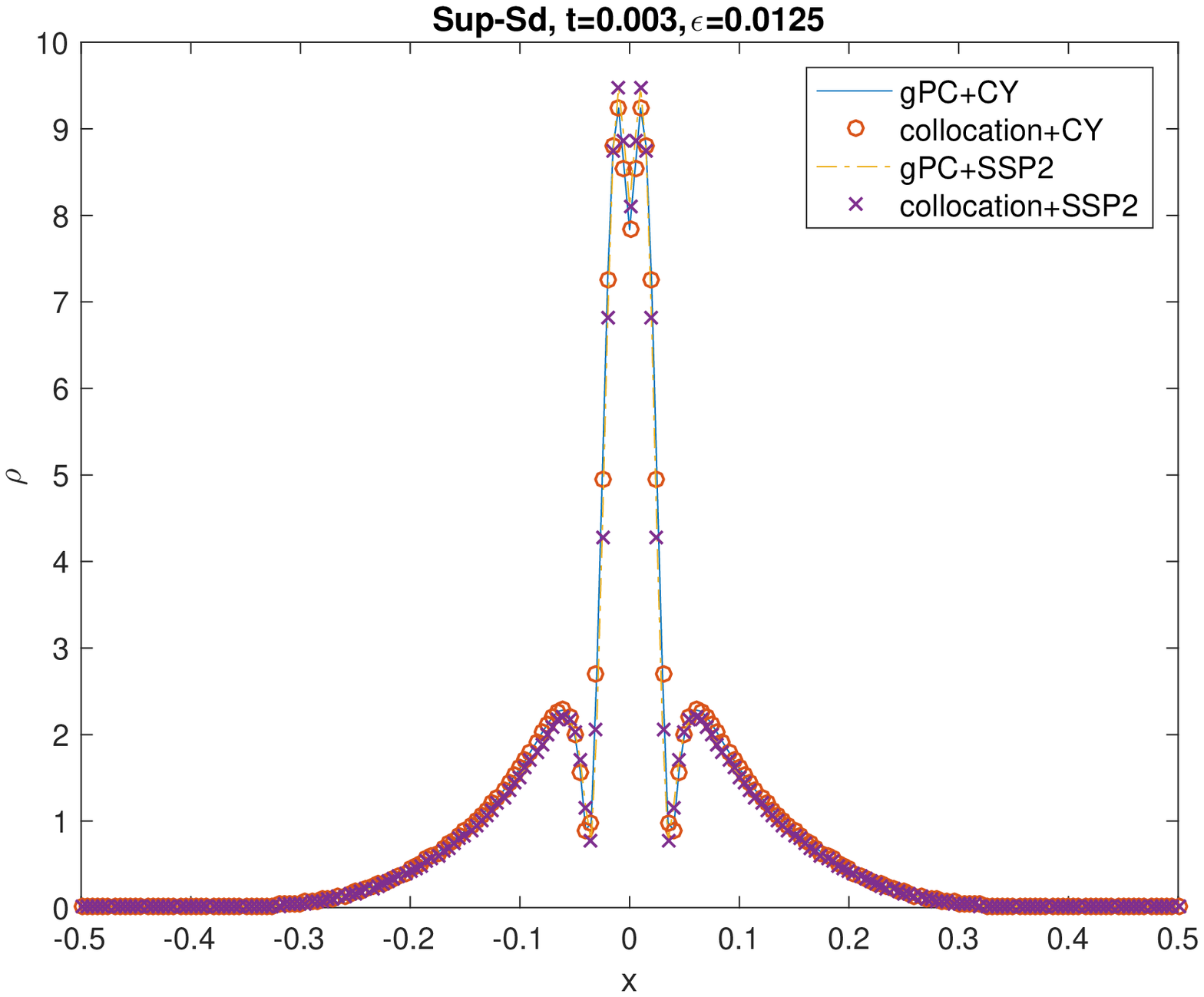}
\includegraphics[scale=0.4]{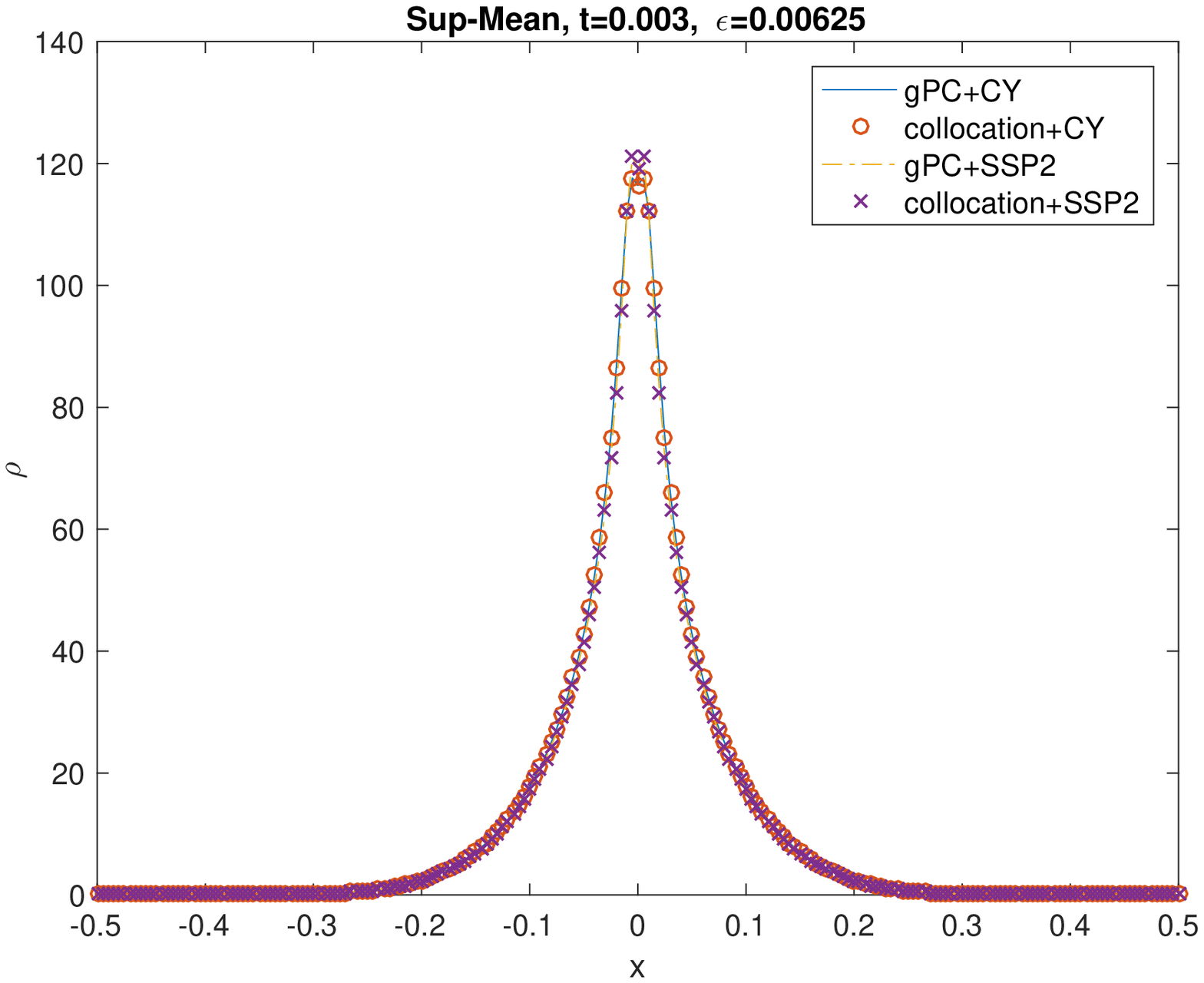}
\includegraphics[scale=0.4]{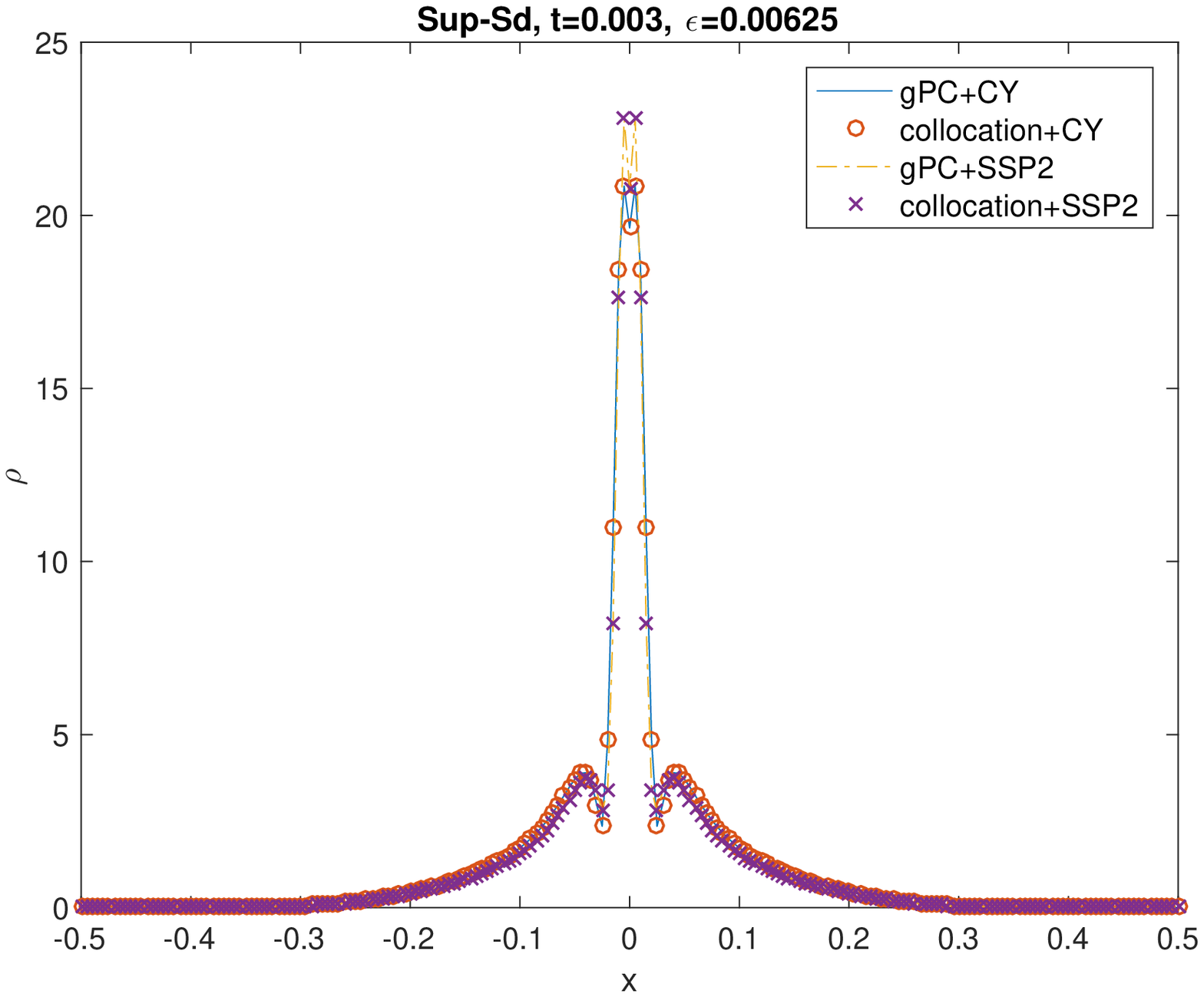}
\caption{The 1D nonlocal random model in the super-critical case. Solid line is obtained by combining the deterministic solver \cite{carrillo2013asymptotic} with the gPC method and circle is obtained by combining the deterministic solver \cite{carrillo2013asymptotic} with the collocation method. Dashed line is obtained by the IMEX-RK using gPC and cross is obtained by the IMEX-RK using collocation. Different values of $\varepsilon$ are tested and the two quantities of interests are mean value (left) and standard deviation (right).}
\end{figure}
\begin{figure}[H]
\includegraphics[scale=0.4]{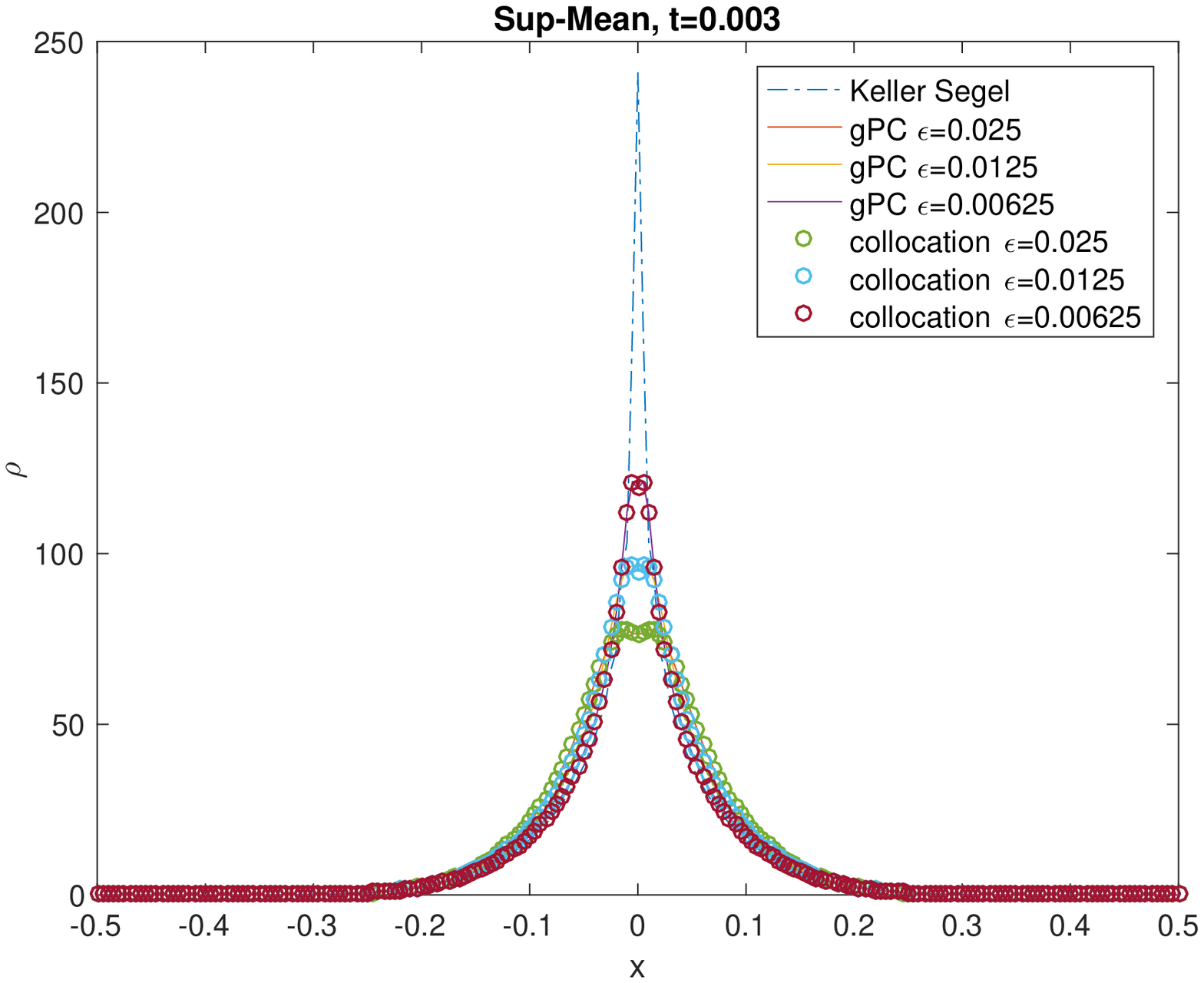}
\includegraphics[scale=0.4]{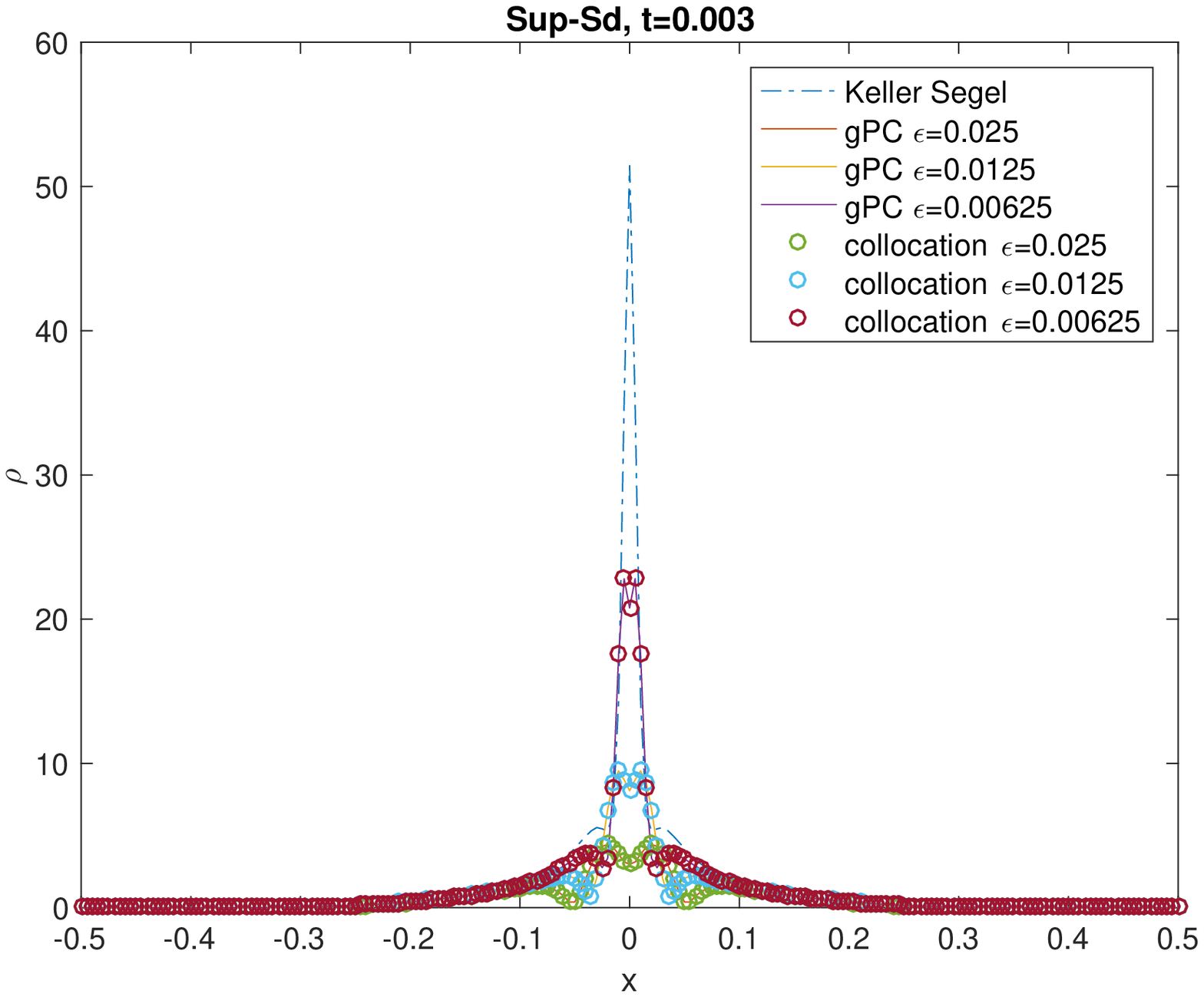}
\caption{The 1D nonlocal random model in the super-critical case. Solid line is obtained by combining the deterministic solver IMEX-RK with the gPC-SG method and circle is obtained by combining the deterministic solver IMEX-RK with the collocation method. Dashed line is the gPC-SG solution of the limiting Keller-Segel equations with uncertainty. Different values of $\varepsilon$ are tested and the two quantities of interests are mean value (left) and standard deviation (right).}
\end{figure}

Figure 3 shows that the IMEX-RK solution agrees well with results of \cite{carrillo2013asymptotic} for all $\varepsilon$ no matter combined with gPC approach or collocation approach to deal with the uncertainty. Small differences between the two methods, especially near the singularity for small $\varepsilon$, are observed due to different orders of accuracy, but the SG solution always matches the collocation solution accurately for the same deterministic solver. Figure 4 shows that the mean and standard deviation of the kinetic chemotaxis solutions both tend to the quantities of the limiting Keller-Segel solution as $\varepsilon\to 0$ for fixed $\Delta t$ and $\Delta x$, which verifies the sAP property.

\subsubsection{Global Existence and Finite Time Blow Up}
\begin{figure}[H]
\includegraphics[scale=0.4]{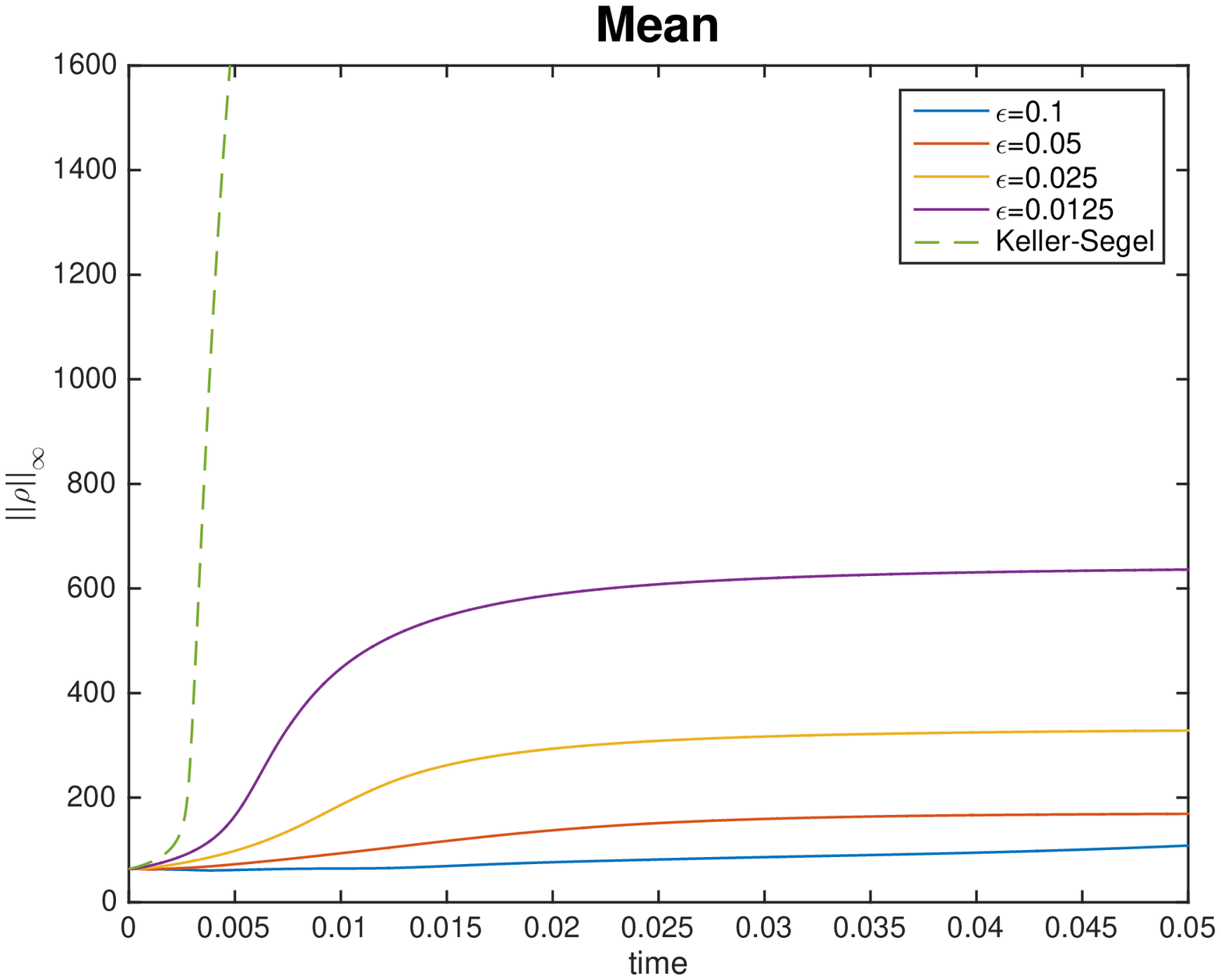}
\includegraphics[scale=0.4]{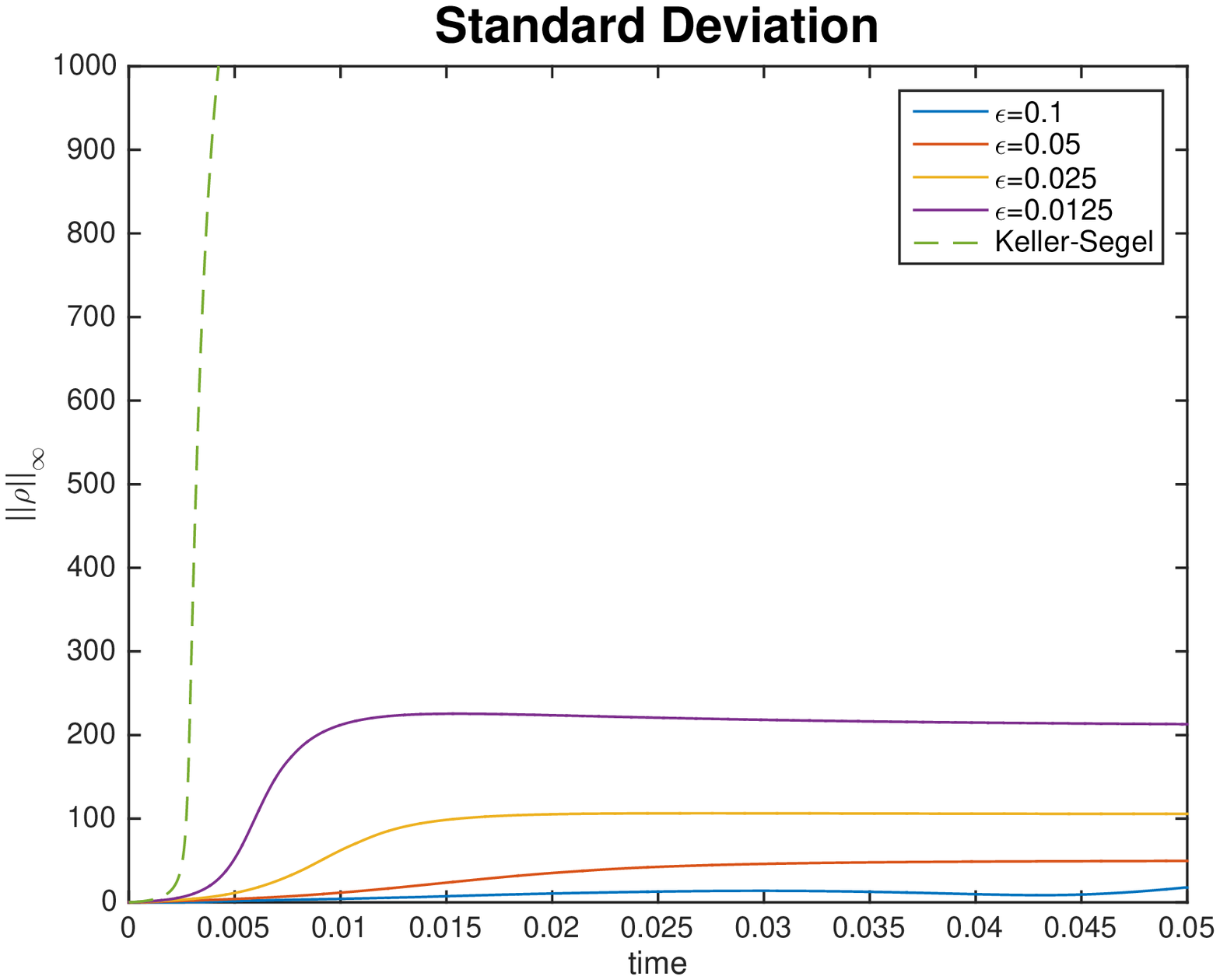}
\caption{The 1D nonlocal random model in the super-critical case. Solid line is obtained by combining the deterministic solver IMEX-RK with the gPC-SG method and dashed line is the gPC-SG solution of the limiting Keller-Segel equations. $\rho$ in infinity norm with different values of $\varepsilon$ are tested and the two quantities of interests are mean value (left) and standard deviation (right).}
\end{figure}
As proved in \cite{chalub2004kinetic}, the solution to the kinetic system (\ref{2-7}) with the nonlocal turning kernel is bounded on $[0,T]$, for any time $T$. However, the Keller-Segel solution will blow up in finite time with a supercritical mass. We examine the mean value and standard deviation of $\|\rho\|_\infty$ for relatively long time ($t\gg t_b$) in Figure 5. The uncertain systems show the same properties as the deterministic ones, e.g. the kinetic systems have global bound in the first and second moments for different $\varepsilon$ while the Keller-Segel solution will blow up in expected finite time.

\subsubsection{The Stationary Solution of the Kinetic system}
\begin{figure}[H]
\includegraphics[scale=0.4]{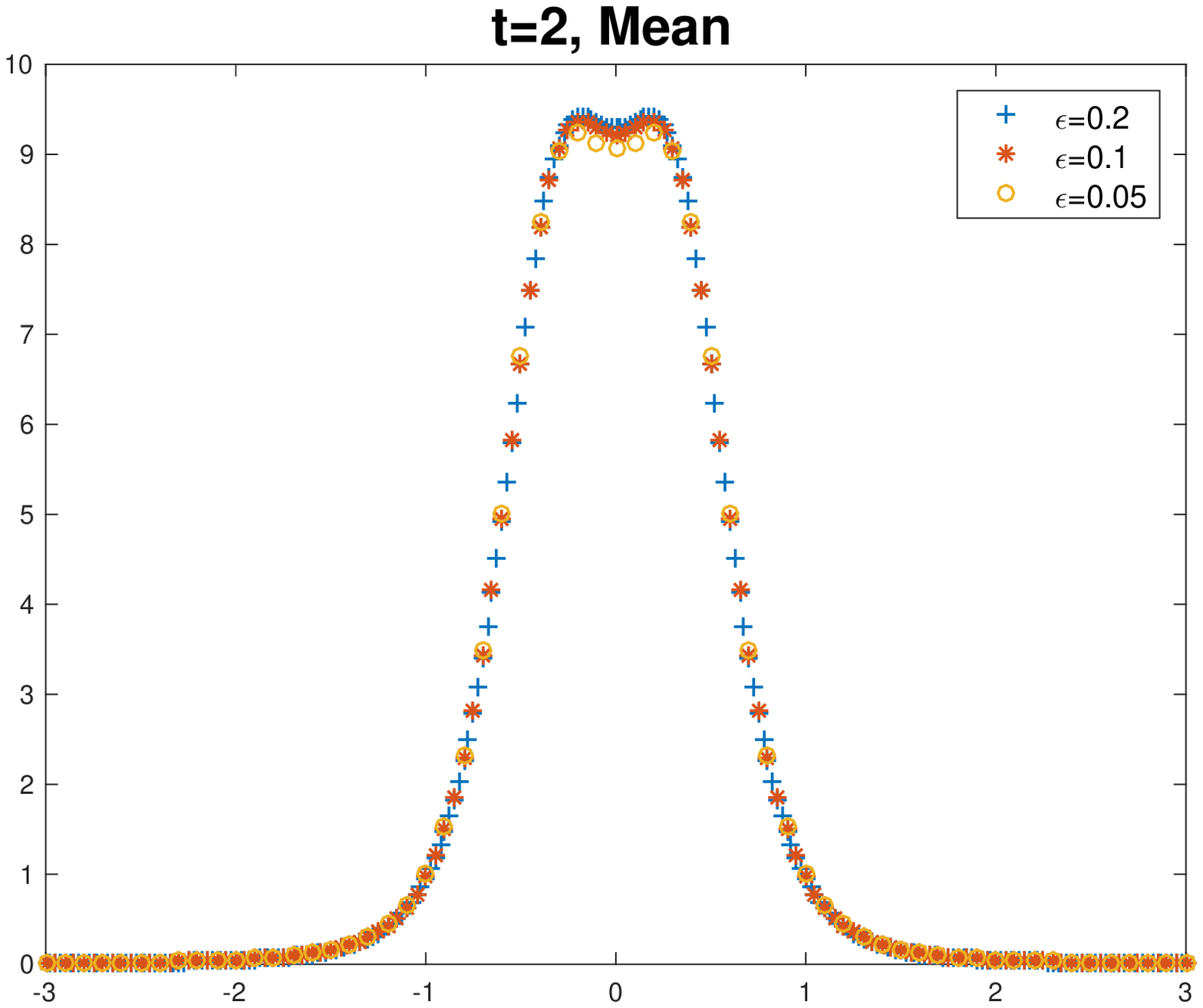}
\includegraphics[scale=0.4]{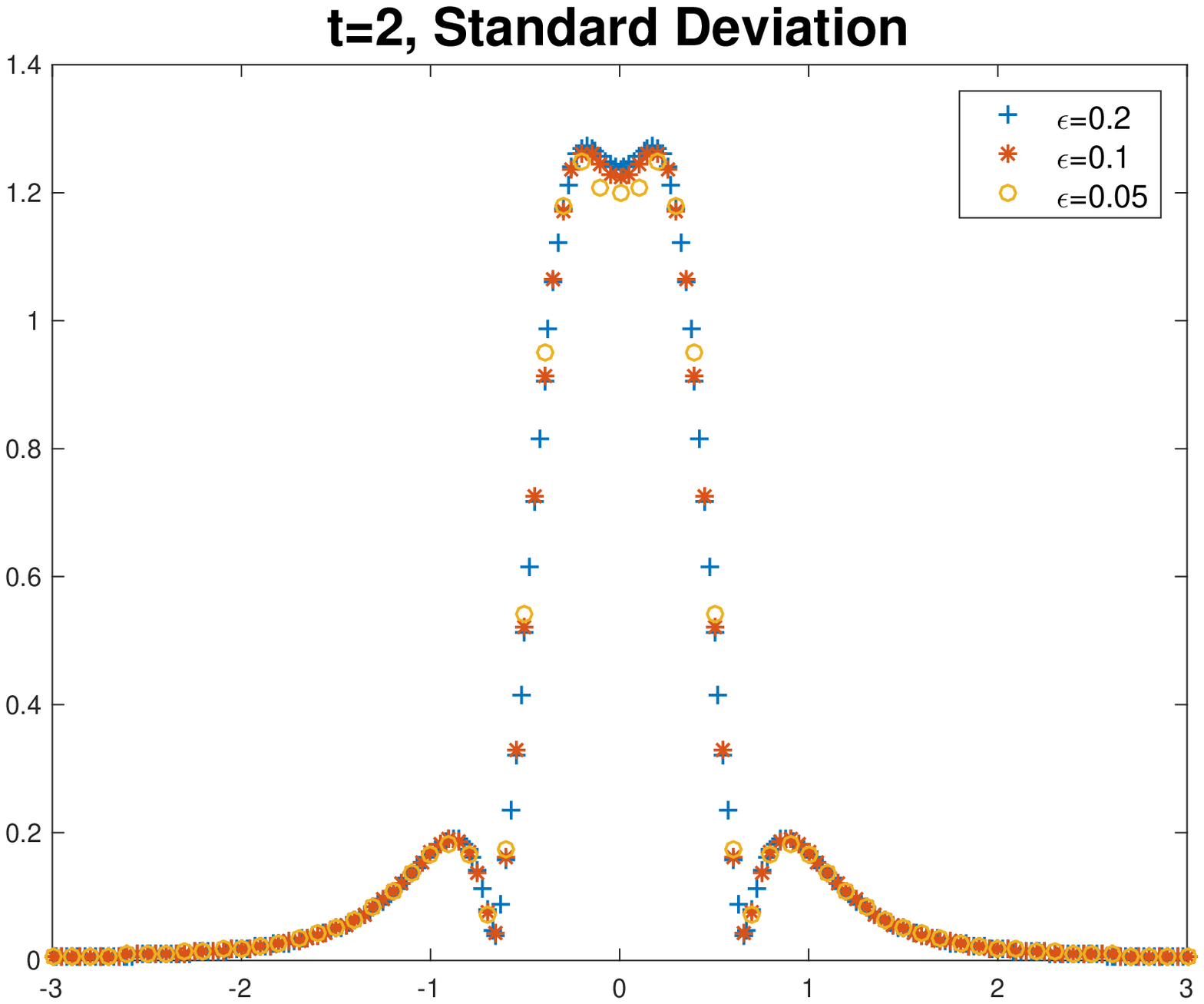}
\caption{The 1D nonlocal random model in the super-critical case. The mean (left) and standard deviation (right) of the function $\varepsilon\rho(\varepsilon x)$ for different $\varepsilon$ are presented. $t=2\gg t_b$.}
\end{figure}

The numerical tests in \cite{carrillo2013asymptotic} suggest that the solution of the deterministic kinetic system with a supercritical initial mass stabilizes toward a stationary state after long time. We also check to see if the same property holds for the kinetic system with random inputs. We plot the mean and standard deviation of $\tilde \rho(x)=\varepsilon\rho(\varepsilon x)$ in Figure 6, which shows that the mean and standard deviation both converge to some stationary state at a long time $t=2$, while the mean agrees with the deterministic stationary solution.

\subsection{The interaction between peaks: the 1D Nonlocal Model with Random Initial Data}
As shown in \cite{BCC}, the interactions between several peaks for the modified Keller-Segel system can be interpreted as optimal transportation. In the following numerical tests, we are going to make some observations of the interaction changes in the kinetic system cased by different types of randomness in initial data. 
\subsubsection{Case 1: Two symmetric peaks, without enough mass in each peak}
\begin{figure}[H]
\begin{center}
\includegraphics[scale=0.4]{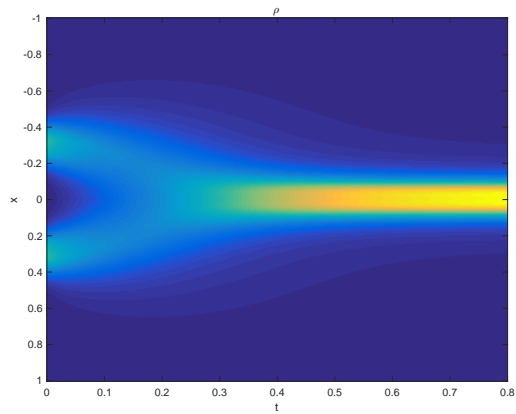}
\caption{ Deterministic solution of $\rho(x,t)$ with initial data $f_0=4\sqrt{5\pi}\left(1.5e^{-80(x-0.3)^2}+1.5e^{-80(x+0.3)^2}\right)$, $\varepsilon=0.1$. (Figure 8 in \cite{carrillo2013asymptotic}).}
\end{center}
\end{figure}
\begin{figure}[H]
\includegraphics[scale=0.4]{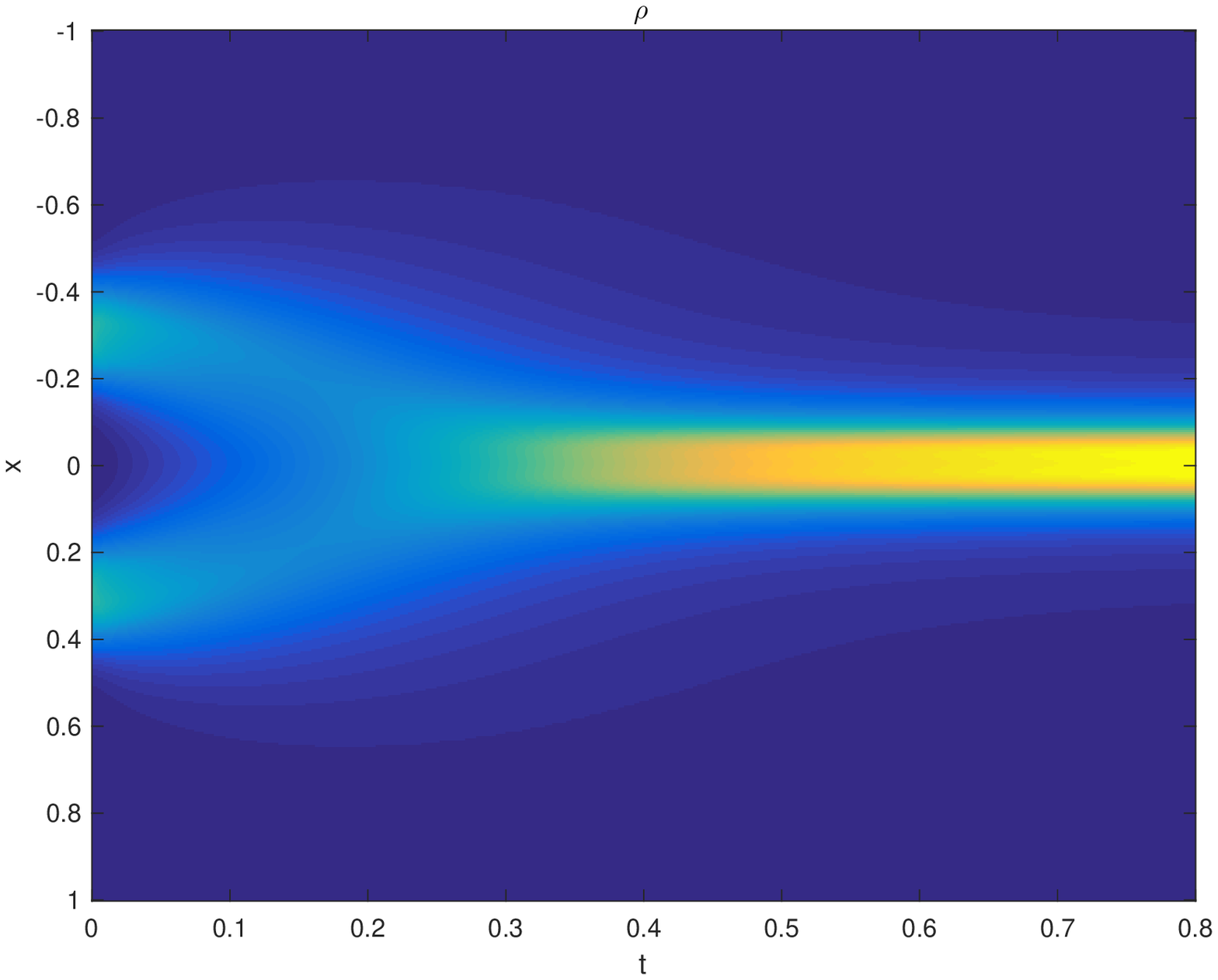}
\includegraphics[scale=0.4]{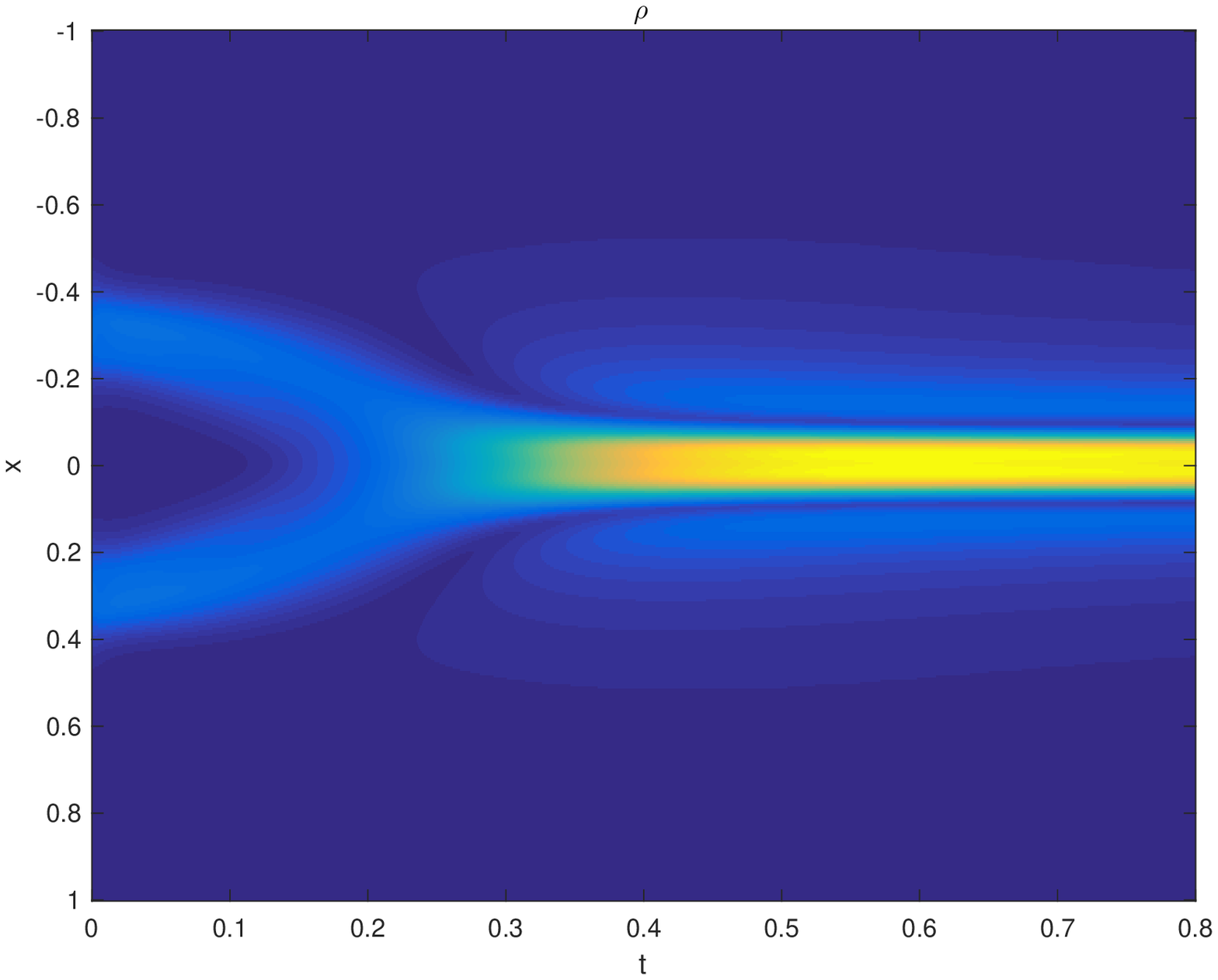}
\caption{Left is the mean and right is the standard deviation of $\rho(x,t,z)$ respectively, with random initial condition $f_0=4\sqrt{5\pi}\left((1.5+0.5z)e^{-80(x-0.3)^2}+(1.5+0.5z)e^{-80(x+0.3)^2}\right), z\sim \mathcal U[-1,1]$, $\varepsilon=0.1$. }
\end{figure}
\begin{figure}[H]
\includegraphics[scale=0.4]{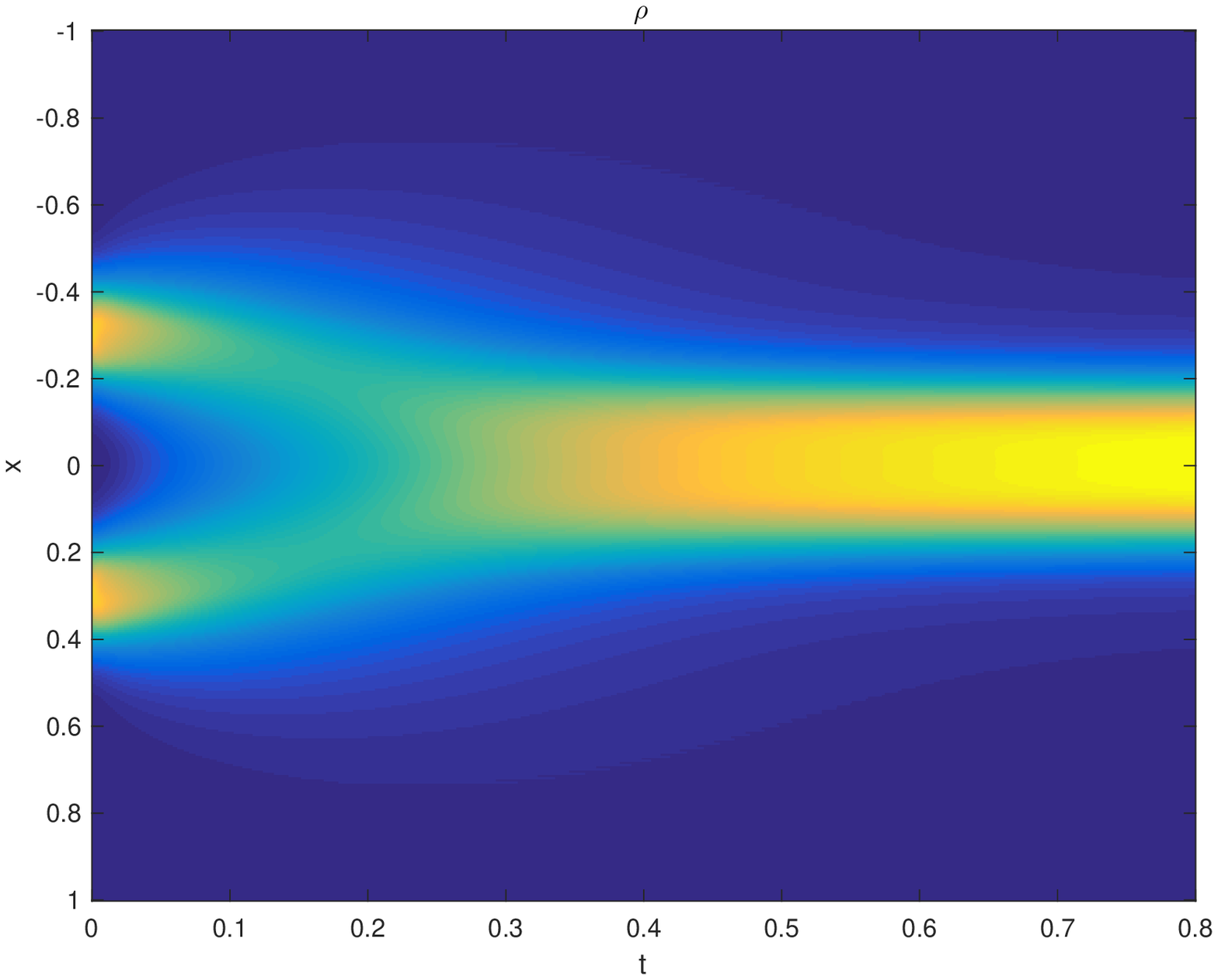}
\includegraphics[scale=0.4]{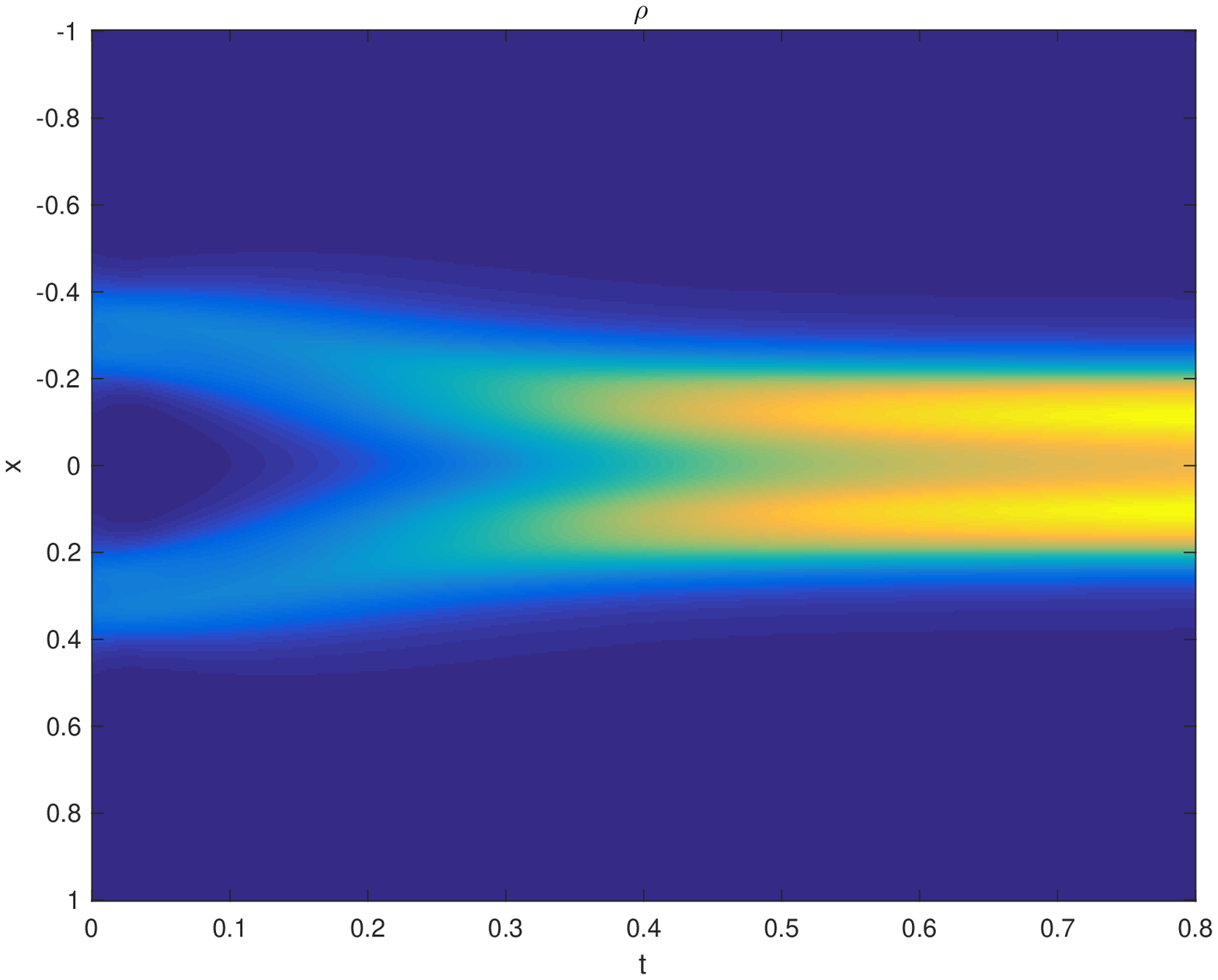}
\caption{Left is the mean and right is the standard deviation of $\rho(x,t,z)$ respectively, with random initial condition $f_0=4\sqrt{5\pi}\left((1.5+0.5z)e^{-80(x-0.3)^2}+(1.5-0.5z)e^{-80(x+0.3)^2}\right), z\sim \mathcal U[-1,1]$, $\varepsilon=0.1$.}
\end{figure}

In this case, we still have $M_c=2\pi$ and $\bar M_c\approx 2.197\pi$. We reproduced the deterministic attraction between two symmetric peaks with total mass $3\pi$ in Figure 7. Then we input symmetric randomness in each peak, i.e. the total mass follows from uniform distribution from $2\pi$ to $4\pi$, keeping each peak without enough mass. Figure 8 shows that symmetric randomness keeps the attraction behavior exactly as the deterministic case. Symmetric properties are preserved both in mean and standard deviation. However, in Figure 9, we input asymmetric randomness in each peak but keeping total mass fixed as $3\pi$. The two peaks will still be attracted in the center but present different behavior as the deterministic one. The asymmetric randomness in this type will widen the mean range of the center peak after concentration, in the sense that asymmetric initial data push the concentrated peak toward the direction with more initial mass.

\subsubsection{Case 2: Two asymmetric peaks with enough mass in each peak}
\begin{figure}[H]
\begin{center}
\includegraphics[scale=0.4]{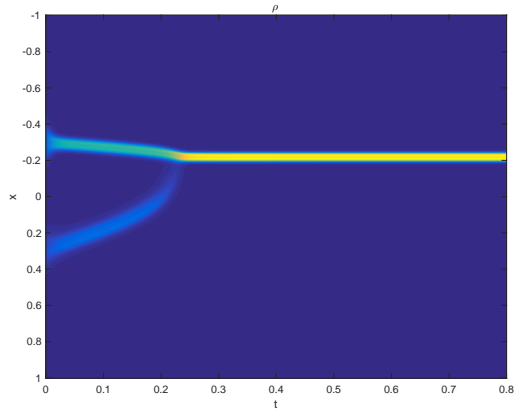}
\end{center}
\caption{Deterministic solution of $\rho(x,t)$ with initial data $f_0=4\sqrt{5\pi}\left(3e^{-80(x-0.3)^2}+5e^{-80(x+0.3)^2}\right)$, $\varepsilon=0.05$. }
\end{figure}
\begin{figure}[H]
\includegraphics[scale=0.4]{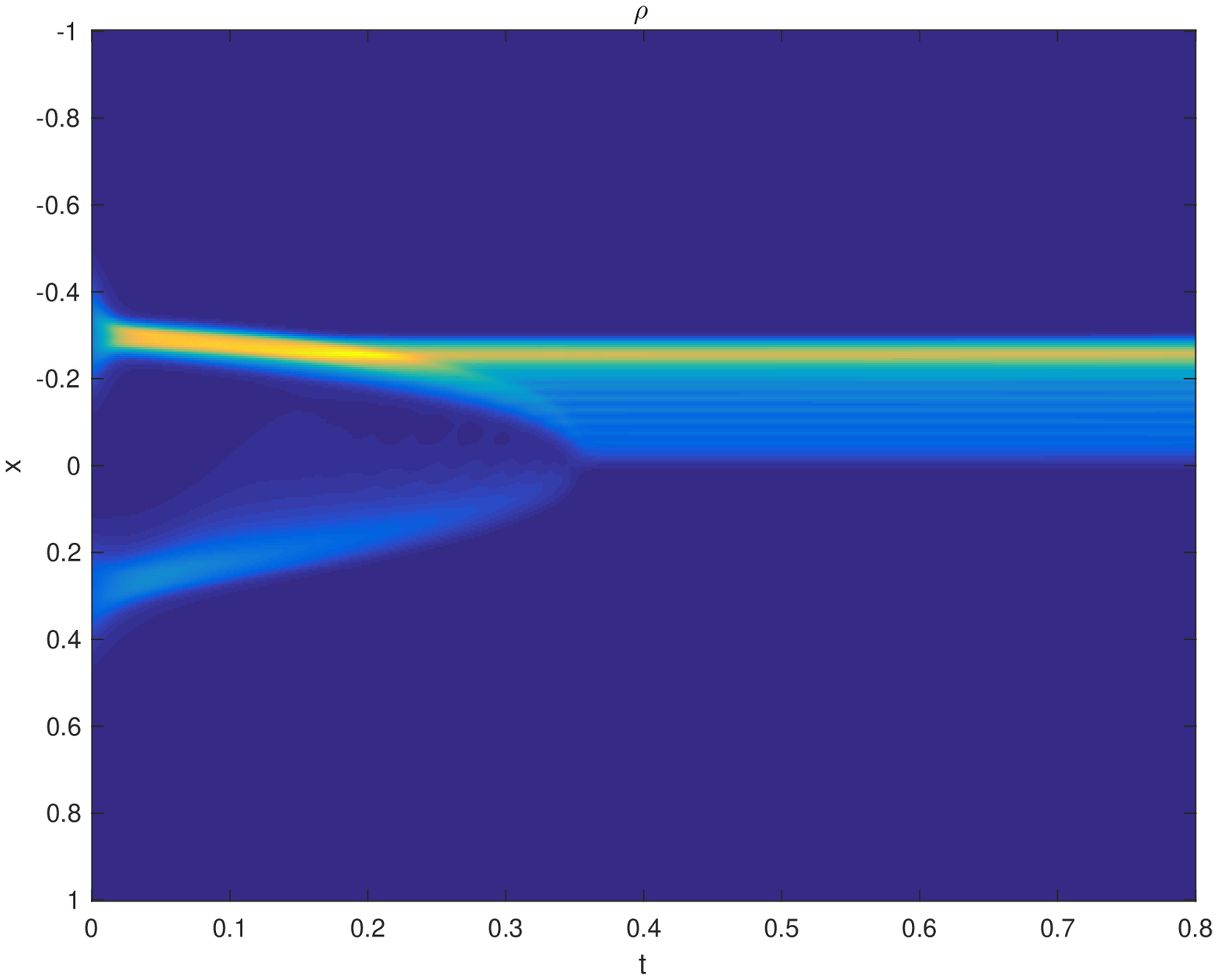}
\includegraphics[scale=0.4]{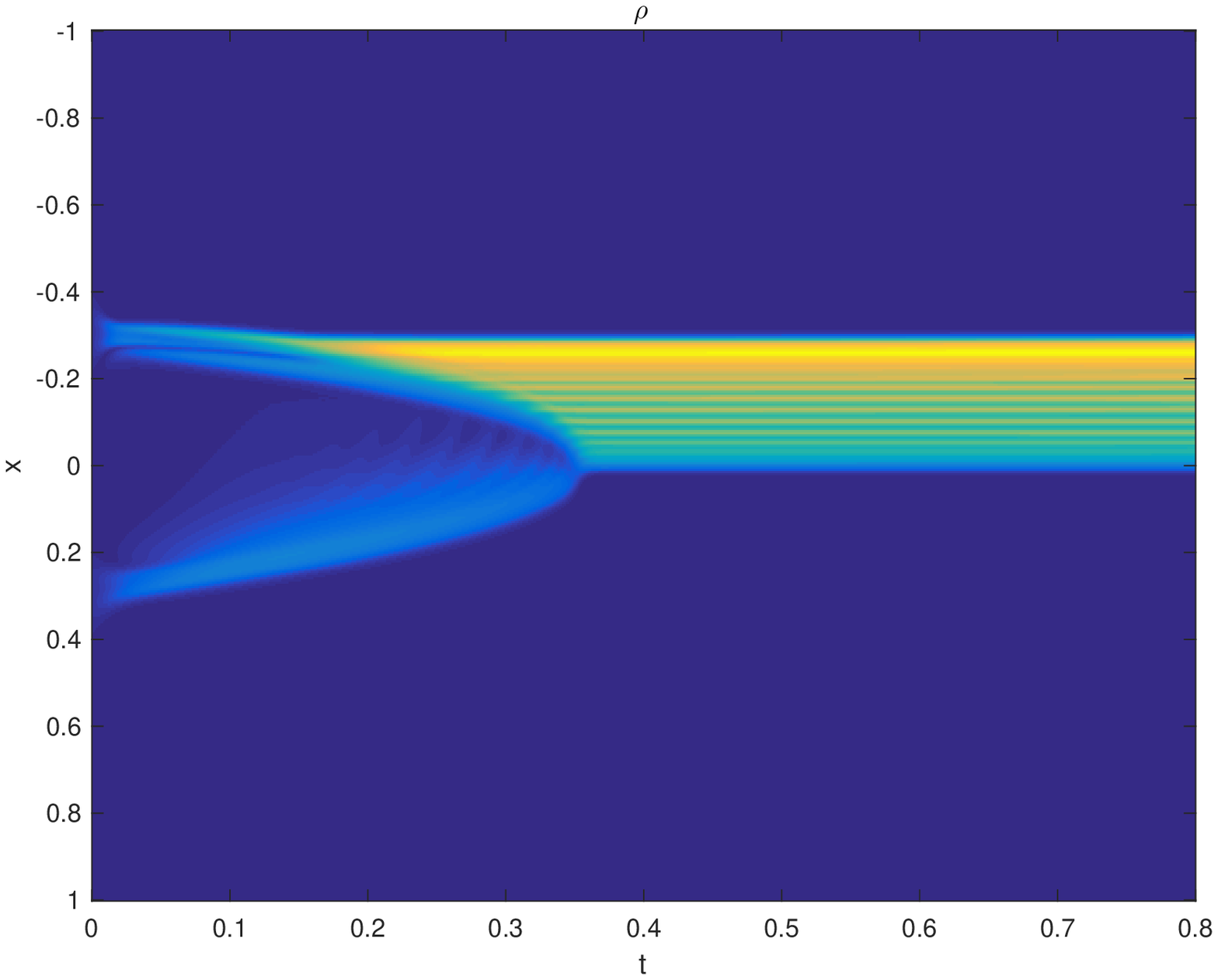}
\caption{Left is the mean and right is the standard deviation of $\rho(x,t,z)$ respectively, with random initial condition $f_0=4\sqrt{5\pi}\left((3+z)e^{-80(x-0.3)^2}+(5-z)e^{-80(x+0.3)^2}\right), z\sim \mathcal U[-1,1]$, $\varepsilon=0.05$. }
\end{figure}

With $M_c=2\pi$ and $\bar M_c\approx 2.197\pi$, we put asymmetric initial mass both larger than $2\pi$. Figure 10 shows similar results as Figure 10 in \cite{carrillo2013asymptotic}. The mass in each peak is large enough to concentrate but they will merge into a larger peak which locates closer to larger initial peak due to asymmetry. Figure 11 shows the effect of the asymmetric randomness with total initial mass fixed. It can be observed in mean and standard deviation that the randomness affects the concentration time, location and asymmetry, showing the solution behaves sensitively to initial data.

\subsubsection{Case 3: Two Asymmetric peaks (close), one below critical mass, one above critical mass}
\begin{figure}[H]
\begin{center}
\includegraphics[scale=0.4]{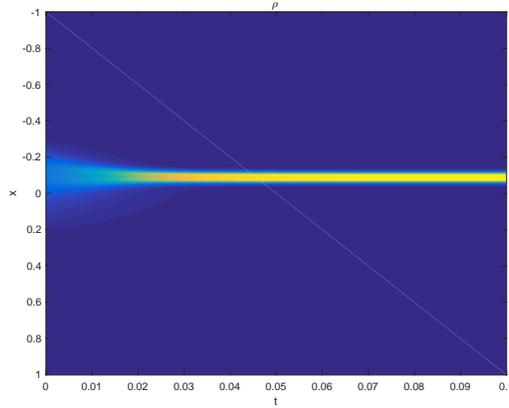}
\end{center}
\caption{Deterministic solution of $\rho(x,t)$ with initial data $f_0=4\sqrt{5\pi}\left(e^{-80(x-0.1)^2}+5e^{-80(x+0.1)^2}\right)$, $\varepsilon=0.05$.}
\end{figure}

\begin{figure}[H]
\includegraphics[scale=0.4]{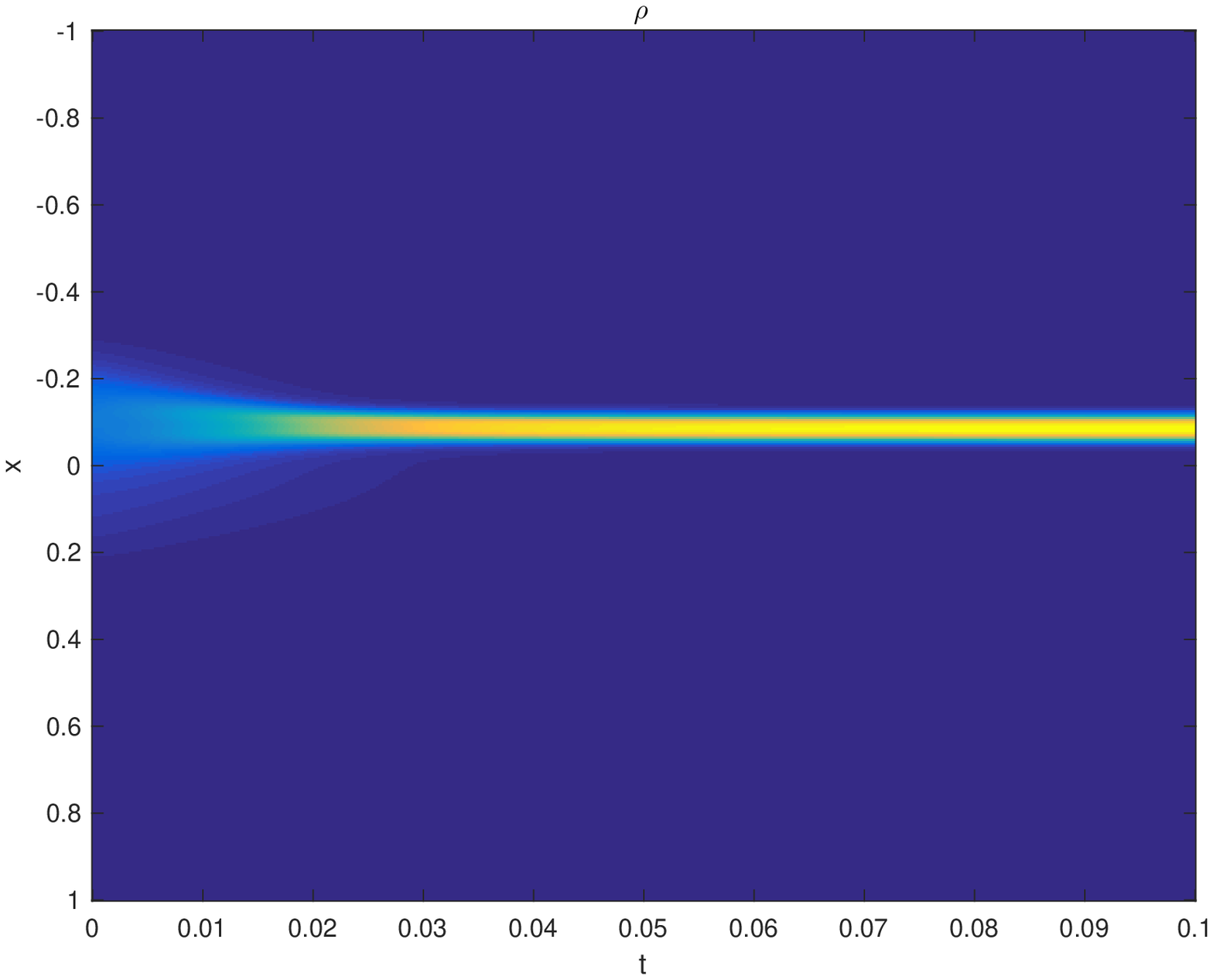}
\includegraphics[scale=0.4]{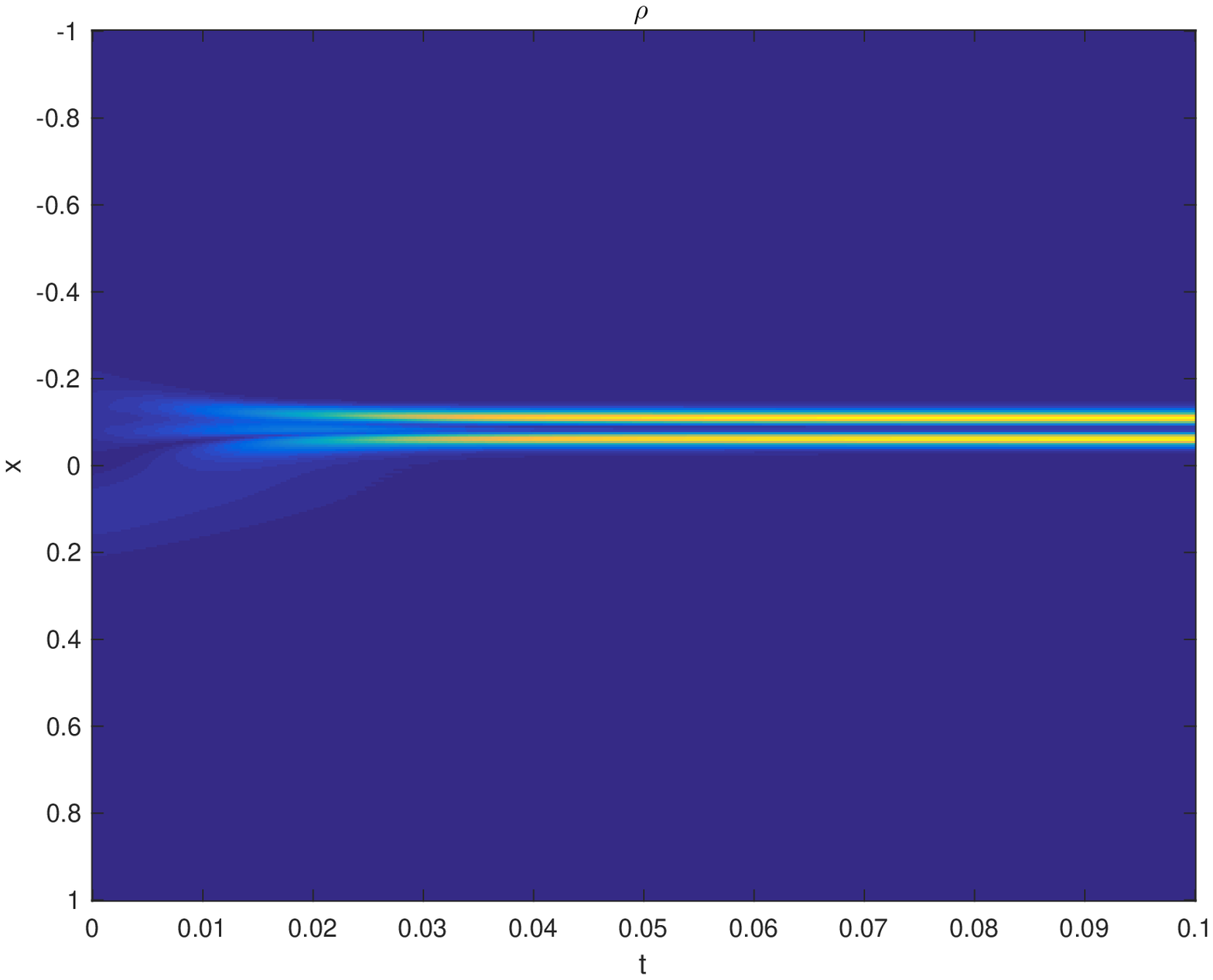}
\caption{Left is the mean and right is the standard deviation of $\rho(x,t,z)$ respectively, with random initial condition $f_0=4\sqrt{5\pi}\left((1+0.5z)e^{-80(x-0.1)^2}+(5-0.5z)e^{-80(x+0.1)^2}\right), z\sim \mathcal U[-1,1]$, $\varepsilon=0.05$.}
\end{figure}

\begin{figure}[H]
\includegraphics[scale=0.4]{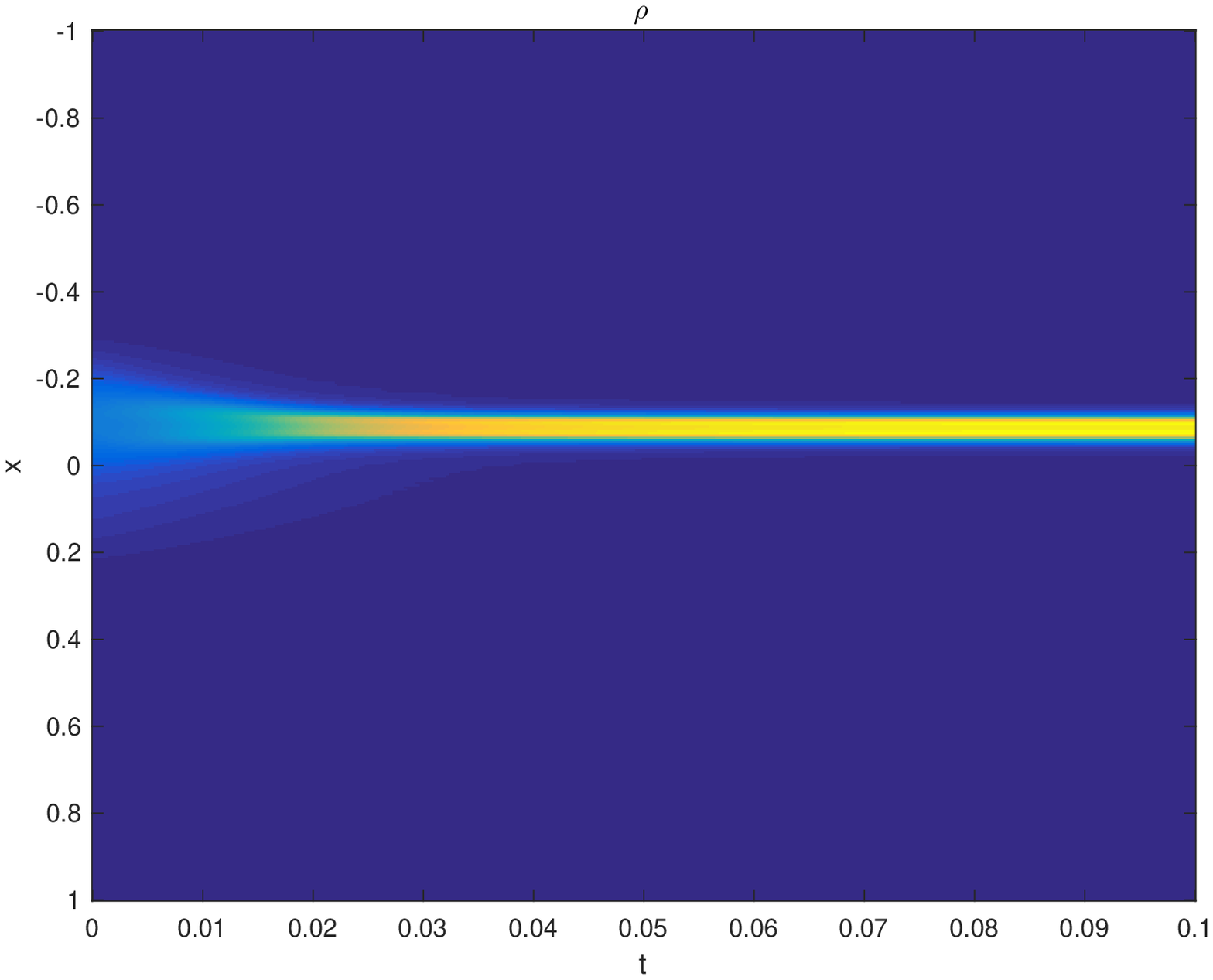}
\includegraphics[scale=0.4]{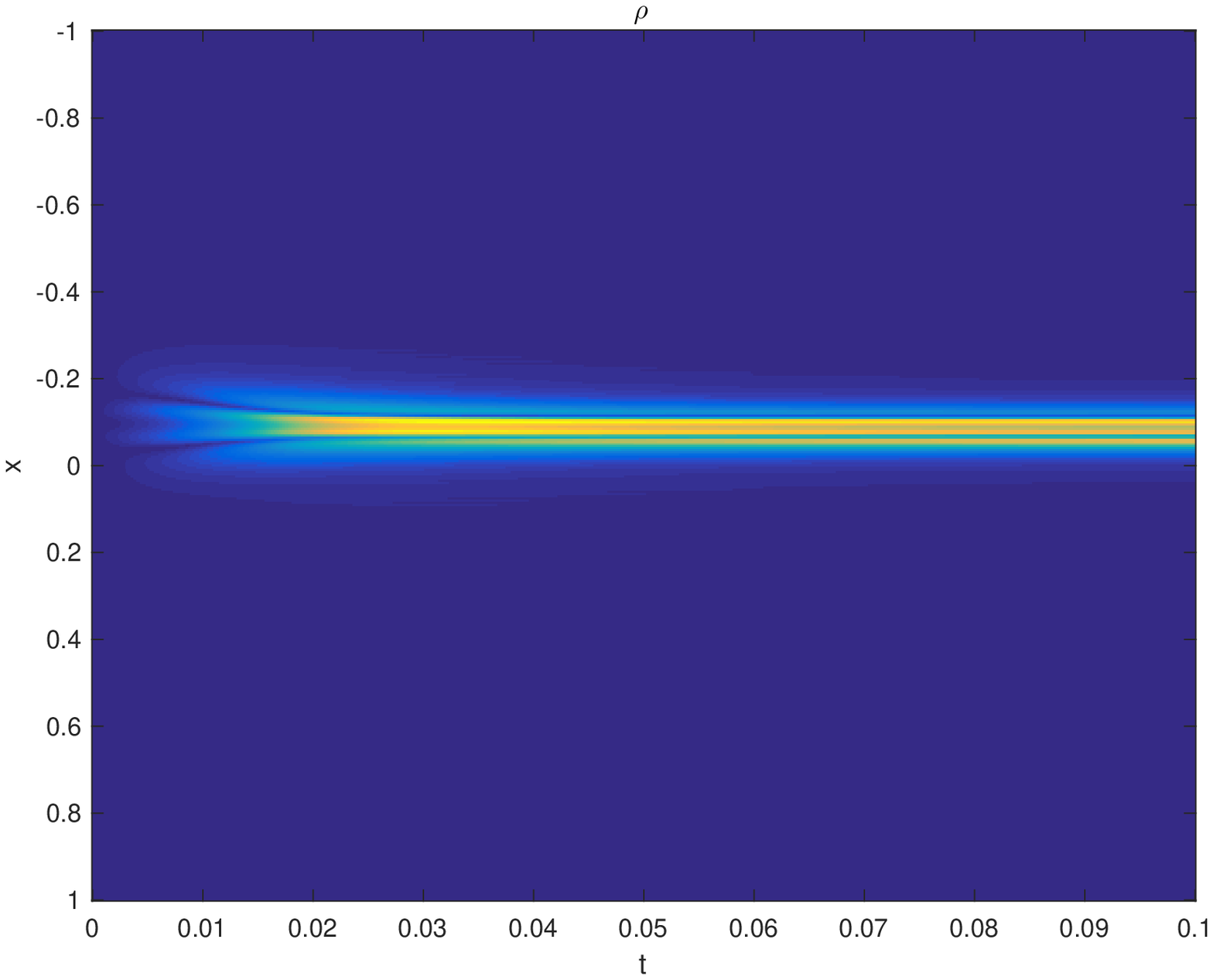}
\caption{Left is the mean and right is the standard deviation of $\rho(x,t,z)$ respectively, with random $\alpha=1+0.5z, z\sim \mathcal U[-1,1]$ and deterministic initial data, $\varepsilon=0.05$.}
\end{figure}

\begin{figure}[H]
\includegraphics[scale=0.4]{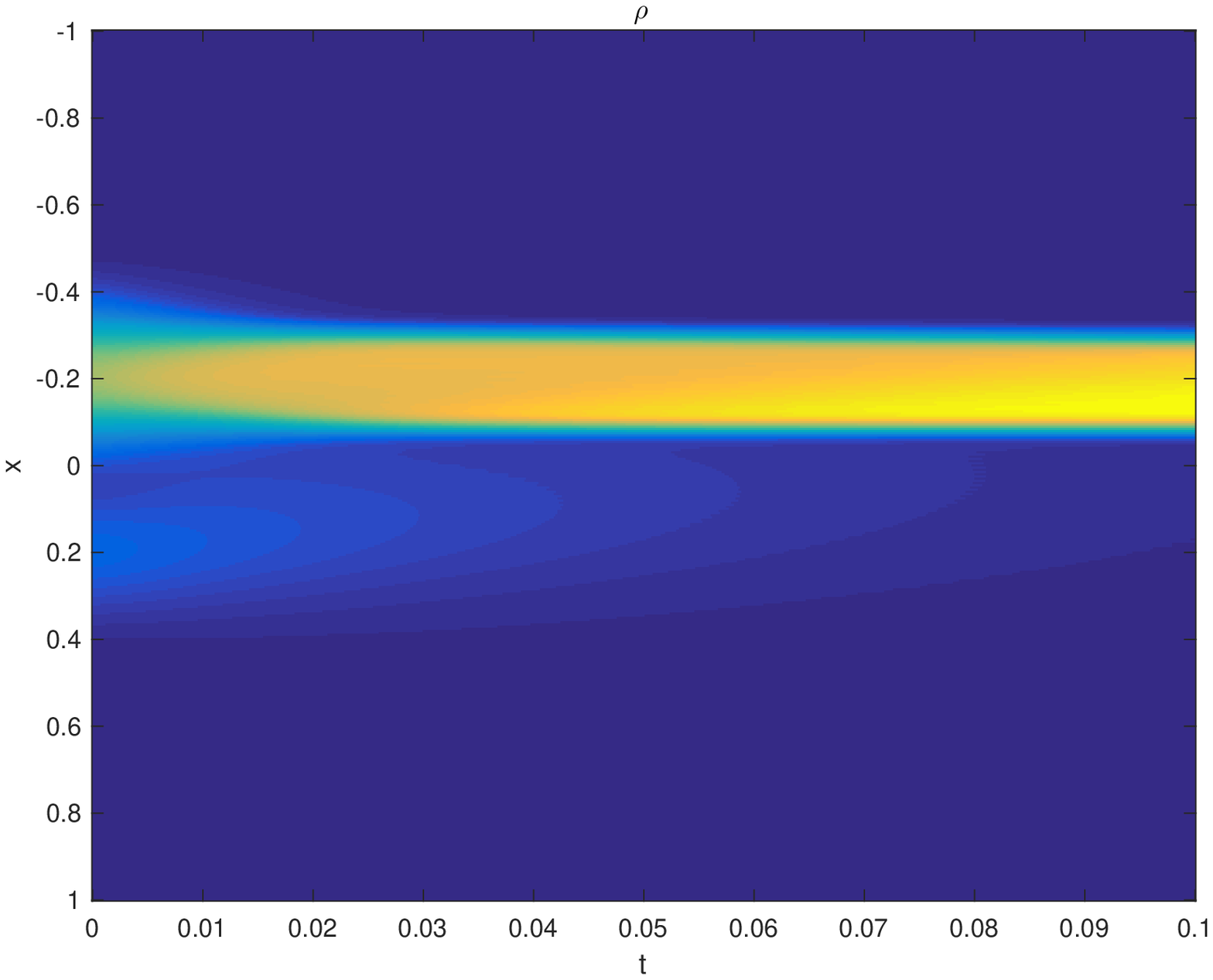}
\includegraphics[scale=0.4]{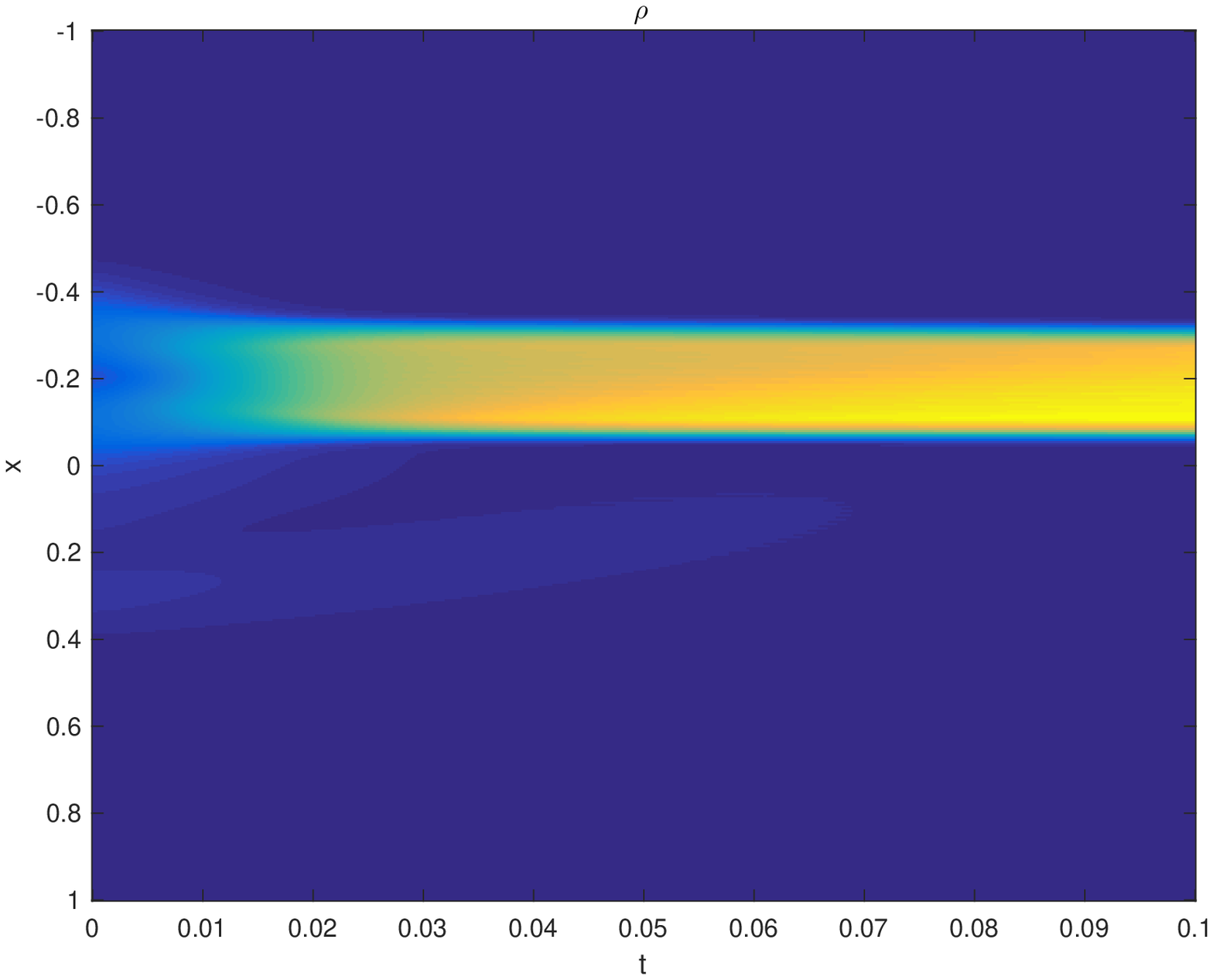}
\caption{Left is the mean and right is the standard deviation of $\rho(x,t,z)$ respectively, with random in position $f_0=4\sqrt{5\pi}\left(e^{-80(x-(0.1+0.1z))^2}+5e^{-80(x+(0.1+0.1z))^2}\right)$, $\varepsilon=0.05$.}
\end{figure}

From Figure 12 to Figure 15, we conduct a series of experiments with two asymmetric peaks, keeping one peak with enough mass and the other one without enough mass. The deterministic case (Figure 12) shows that the peak with less mass will move towards the other one in a short time and then they continue to aggregate mass. In Figure 13, small amount of randomness exchanging between two peaks will not change this tendence in mean. The standard deviation in Figure 13 is asymmetric due to the asymmetric randomness in initial data. In Figure 14, although mean values show no difference, the standard deviation is symmetric because the source of randomness comes from the diffusion coefficient $\alpha$.  Figure 15 shows that the position of the two peaks has significant effects on the aggregation behavior in this case. From mean and standard deviation, one can observe that there exists a critical distance between the two peaks, beyond which the two peaks will not be able to merge. They will be separated to behave independently according to their initial mass.

\subsection{The 1D local Model with Random Initial Data}
\begin{figure}[H]
\begin{center}
\includegraphics[scale=0.4]{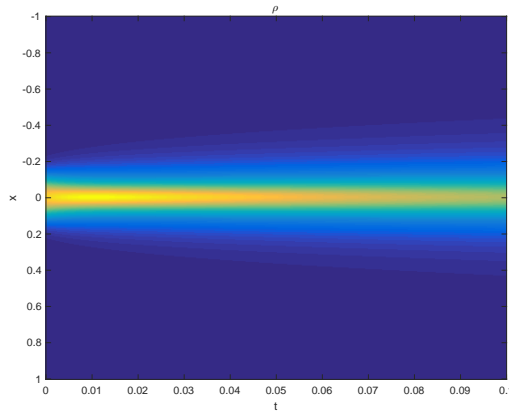}
\end{center}
\caption{Deterministic solution of $\rho(x,t)$ with initial data $f_0=1.5\times4\sqrt{5\pi}e^{-80x^2}$ (subcritical), $\varepsilon=0.01$.}
\end{figure}

\begin{figure}[H]
\includegraphics[scale=0.4]{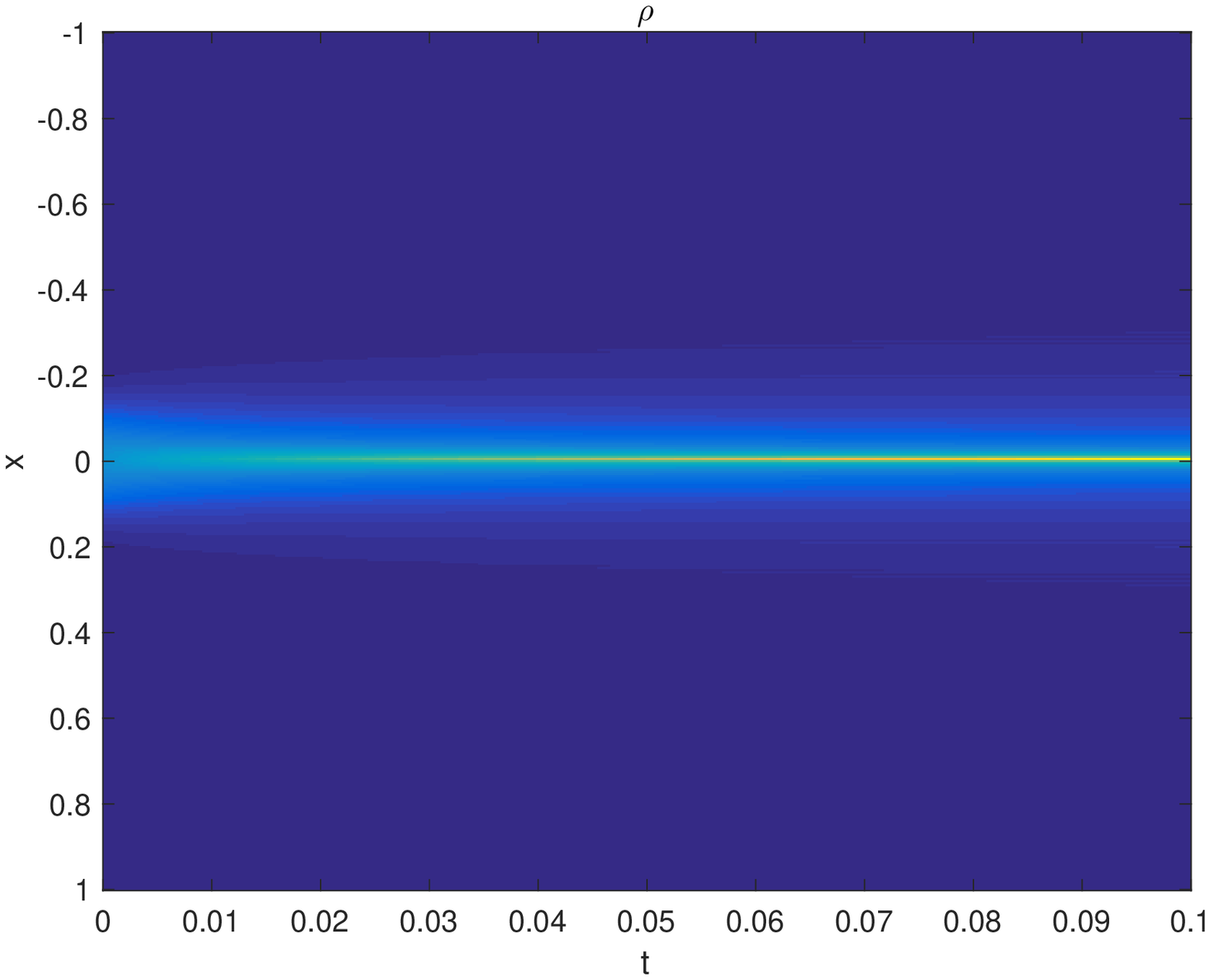}
\includegraphics[scale=0.4]{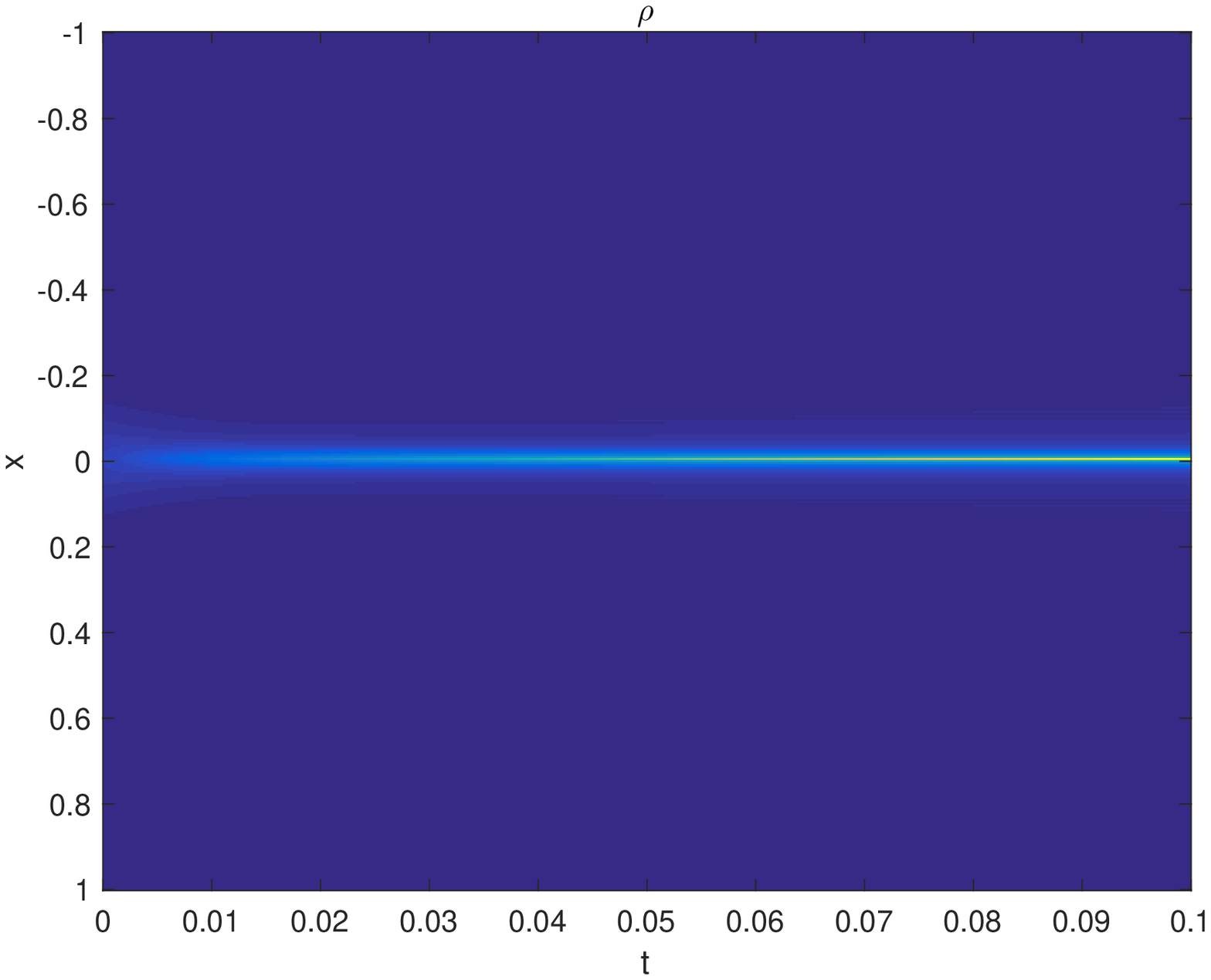}
\caption{Left is the mean and right is the standard deviation of $\rho(x,t,z)$ respectively, with random in initial $f_0=(1.5+z)\times4\sqrt{5\pi}e^{-80x^2}$, $\varepsilon=0.01$.}
\end{figure}

\begin{figure}[H]
\begin{center}
\includegraphics[scale=0.4]{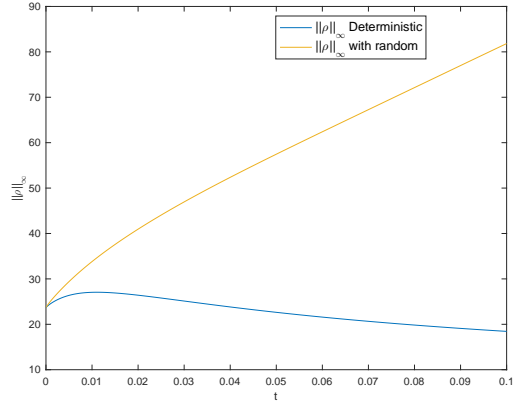}
\end{center}
\caption{Comparison of $\|\rho\|_\infty$ in deterministic solution and mean solution.}
\end{figure}

Although theoretic study of the local model with supercritical mass is still not enough to understand the blow up behavior of the local kinetic chemotaxis system, numerical tests in \cite{carrillo2013asymptotic} suggested blowing up density by using adapted grids. Instead of studying the blowing up property, we are more interested in the sensitive effect brought up by the randomness around critical mass. In Figure 16, the deterministic solution with subcritical initial data will stay bounded as expected from theory. However, the solution keeps aggregating in Figure 17 if we introduce randomness into initial mass ranging from subcritical mass to supercritical mass with mean less than critical mass. More obviously in Figure 18, the deterministic solution will remain bounded while the mean of the random solution appears increasing in time. This indicates that the introduced randomness will influence the properties of the solution. If the range of the initial data contains supercritical regimes, the solution of the random system will behave quite differently from the deterministic one with average initial mass.

\begin{remark}
Stochastic collocation method is used in test 6.4 to deal with $|\partial_xs|$ as following:
Once $\hat{\bold s}=(\hat s_1,\cdots \hat s_K)^T$ is obtained at each time iteration, $\partial_x\hat{\bold s}=(\partial_x\hat s_1,\cdots \partial_x\hat s_K)^T$ can be obtained using finite difference. Then $|\partial_xs(x,z)|$ can be approximated by $|\sum_{k=1}^K\partial_x\hat s_k(x)\Phi_k(z)|$. According to the probability density function of $z$, one can have a set of collocation points $\{z_j\}_{j=1}^M$ with corresponding weights $\{w_j\}_{j=1}^M$. ($M=20$ points are used in our test.) Project $|\partial_xs(x,z)|$ onto the space $\{\Phi_1(z),\cdots,\Phi_K(z)\}$ in order to get the gPC coefficients $(\xi_1,\cdots,\xi_K)^T$ of $|\partial_x s|$ such that $|\partial_xs|\approx\sum_{k=1}^K\xi_k(x)\Phi_k(z)$, one can get 
$$\begin{aligned}
\xi_k(x)&=\int_{I_z}|\partial_xs(x,z)|\Phi_k(z)\lambda(z)dz\\
&\approx\sum_{j=1}^M|\partial_xs(x,z_j)|\Phi_k(z_j)w_j\\
&\approx\sum_{j=1}^M|\sum_{i=1}^K\partial_x\hat s_i(x)\Phi_i(z_j)|\Phi_k(z_j)w_j, \ \ \ \ \ \ k=1,\cdots, K.
\end{aligned}$$
Then $(\xi_1,\cdots,\xi_K)^T$ is used in the algorithm.
\end{remark}

\section{Conclusion}
In this article, a high order efficient stochastic Asymptotic-Preserving scheme is designed for the kinetic chemotaxis system with random inputs. Compared with the previous work \cite{carrillo2013asymptotic} for the deterministic kinetic chemotaxis equations, our new method, based on generalized Polynomial Chaos Galerkin approach to deal with uncertainty, uses the implicit-explicit Runge-Kutta (IMEX-RK) method to gain high accuracy and utilize a macroscopic penalty to improve the CFL stability condition from parabolic type to hyperbolic type in the diffusive regime. Both efficiency and accuracy are verified in the numerical tests.

There are many remaining work for future study. Since the kinetic description of the chemotaxis system is more microscopic and consistent with the classical Keller-Segel equation with more favorable properties (e.g. global existence for nonlocal turning kernel), it is important to complete the theory as well as conduct efficient numerical simulations comparing with experimental results. On one hand, many properties, which have been explored numerically in this paper and previous work \cite{carrillo2013asymptotic,chertockasymptotic}, remain to be verified by rigorous theory. On the other hand, the high order efficient method in this paper should be extended to 2D and 3D kinetic chemotaxis system to support the theory in future work. Moreover, some general problems for uncertainty quantification, such as high dimensionality and rigorous sensitive analysis, are to be further studied.

\renewcommand\refname{Reference}
\bibliographystyle{plain}
\bibliography{ChemotaxisUQr}

\begin{thebibliography}{10}

\bibitem{BCC}
V.~Calvez A.~Blanchet and J.~A. Carrillo.
\newblock {C}onvergence of the {M}ass-{T}ransport {S}teepest {D}escent {S}cheme
  for the {S}ubcritical {P}atlak-{K}eller-{S}egel {M}odel.
\newblock {\em SIAM Journal on Numerical Analysis}, 46(2):691--721, 2008.

\bibitem{alt1980orientation}
Wolfgang Alt.
\newblock {O}rientation of {C}ells {M}igrating in {A} {C}hemotactic {G}radient.
\newblock In {\em Biological growth and spread}, pages 353--366. Springer,
  1980.

\bibitem{alt1980biased}
Wolgang Alt.
\newblock {B}iased {R}andom {W}alk {M}odels for {C}hemotaxis and {R}elated
  {D}iffusion {A}pproximations.
\newblock {\em Journal of mathematical biology}, 9(2):147--177, 1980.

\bibitem{blanchet2006two}
Adrien Blanchet, Jean Dolbeault, and Beno{\^\i}t Perthame.
\newblock Two-{D}imensional {K}eller-{S}egel {M}odel: {O}ptimal {C}ritical
  {M}ass and {Q}ualitative {P}roperties of the {S}olutions.
\newblock {\em Electronic Journal of Differential Equations (EJDE)[electronic
  only]}, 2006:Paper--No, 2006.

\bibitem{boscarino2017unified}
S~Boscarino, L~Pareschi, and G~Russo.
\newblock {A} {U}nified imex {R}unge-{K}utta {A}pproach for {H}yperbolic
  {S}ystems with {M}ultiscale {R}elaxation.
\newblock {\em SIAM Computational Analysis}, 2017 to appear.

\bibitem{boscarino2013implicit}
Sebastiano Boscarino, Lorenzo Pareschi, and Giovanni Russo.
\newblock {I}mplicit-{E}xplicit {R}unge--{K}utta {S}chemes for {H}yperbolic
  {S}ystems and {K}inetic {E}quations in the {D}iffusion {L}imit.
\newblock {\em SIAM Journal on Scientific Computing}, 35(1):A22--A51, 2013.

\bibitem{bournaveas2009critical}
Nikolaos Bournaveas and Vincent Calvez.
\newblock {C}ritical {M}ass {P}henomenon for {A} {C}hemotaxis {K}inetic {M}odel
  with {S}pherically {S}ymmetric {I}nitial {D}ata.
\newblock {\em Annales de l'Institut Henri Poincare (C) Non Linear Analysis},
  26(5):1871--1895, 2009.

\bibitem{brenner1999diffusion}
Michael~P Brenner, Peter Constantin, Leo~P Kadanoff, Alain Schenkel, and
  Shankar~C Venkataramani.
\newblock {D}iffusion, {A}ttraction and {C}ollapse.
\newblock {\em Nonlinearity}, 12(4):1071, 1999.

\bibitem{calvez2006modified}
Vincent Calvez, Benoit Perthame, and Mohsen Sharifi~Tabar.
\newblock {M}odified {K}eller-{S}egel {S}ystem and {C}ritical {M}ass for the
  log {I}nteraction {K}ernel.
\newblock {\em Contemp. Math.}, pages 45--62, 2006.

\bibitem{carrillo2013asymptotic}
Jos{\'e}~A Carrillo and Bokai Yan.
\newblock An {A}symptotic {P}reserving {S}cheme for the {D}iffusive {L}imit of
  {K}inetic {S}ystems for {C}hemotaxis.
\newblock {\em Multiscale Modeling \& Simulation}, 11(1):336--361, 2013.

\bibitem{chalub2004kinetic}
Fabio~ACC Chalub, Peter~A Markowich, Beno{\^\i}t Perthame, and Christian
  Schmeiser.
\newblock Kinetic {M}odels for {C}hemotaxis and {T}heir {D}rift-{D}iffusion
  {L}imits.
\newblock In {\em Nonlinear Differential Equation Models}, pages 123--141.

\bibitem{chertockasymptotic}
Alina Chertock, Alexander Kurganov, M{\'a}ria Luk{\'a}cov{\'a}-Medvidov{\'a},
  and Seyma NurOzcan.
\newblock {A}n {A}symptotic {P}reserving {S}cheme for {K}inetic {C}hemotaxis
  {M}odels in {T}wo {S}pace {D}imensions.
\newblock {\em Preprint}, 2016.

\bibitem{chertock2012chemotaxis}
Alina Chertock, Alexander Kurganov, Xuefeng Wang, and Yaping Wu.
\newblock On {A} {C}hemotaxis {M}odel with {S}aturated {C}hemotactic {F}lux.
\newblock {\em Kinet. Relat. Models}, 5(1):51--95, 2012.

\bibitem{crouseilles2017nonlinear}
Nicolas Crouseilles, Shi Jin, Mohammed Lemou, and Liu Liu.
\newblock Nonlinear {G}eometric {O}ptics {B}ased {M}ultiscale {S}tochastic
  {G}alerkin {M}ethods for {H}ighly {O}scillatory {T}ransport {E}quations with
  {R}andom {I}nputs.
\newblock {\em arXiv preprint arXiv:1704.01019}, 2017.

\bibitem{degond2011asymptotic}
Pierre Degond.
\newblock {A}symptotic-{P}reserving {S}chemes for {F}luid {M}odels of
  {P}lasmas.
\newblock {\em Numerical models for fusion}, 39/40 of Panor. Syntheses:1--90,
  2013.

\bibitem{degond2017asymptotic}
Pierre Degond and Fabrice Deluzet.
\newblock {A}symptotic-{P}reserving {M}ethods and {M}ultiscale {M}odels for
  {P}lasma {P}hysics.
\newblock {\em Journal of Computational Physics}, 336:429--457, 2017.

\bibitem{dimarco2014numerical}
Giacomo Dimarco and Lorenzo Pareschi.
\newblock {N}umerical {M}ethods for {K}inetic {E}quations.
\newblock {\em Acta Numerica}, 23:369--520, 2014.

\bibitem{dolbeault2004optimal}
Jean Dolbeault and Beno{\^\i}t Perthame.
\newblock Optimal {C}ritical {M}ass in the {T}wo {D}imensional
  {K}eller--{S}egel {M}odel in {R}2.
\newblock {\em Comptes Rendus Mathematique}, 339(9):611--616, 2004.

\bibitem{herrero1996chemotactic}
Miguel~A Herrero and Juan~JL Vel{\'a}zquez.
\newblock {C}hemotactic {C}ollapse for the {K}eller-{S}egel {M}odel.
\newblock {\em Journal of Mathematical Biology}, 35(2):177--194, 1996.

\bibitem{hillen2009user}
Thomas Hillen and Kevin~J Painter.
\newblock {A} {U}ser's {G}uide to pde {M}odels for {C}hemotaxis.
\newblock {\em Journal of mathematical biology}, 58(1-2):183, 2009.

\bibitem{horstmann20031970}
Dirk Horstmann et~al.
\newblock {F}rom 1970 until {P}resent: the {K}eller-{S}egel {M}odel in
  {C}hemotaxis and its {C}onsequences.
\newblock {\em I. Jahresberichte DMV}, 105 (3), 2003.

\bibitem{hu2016stochastic}
Jingwei Hu and Shi Jin.
\newblock {A} {S}tochastic {G}alerkin, {M}ethod for the {B}oltzmann {E}quation
  with {U}ncertainty.
\newblock {\em Journal of Computational Physics}, 315:150--168, 2016.

\bibitem{jin1999efficient}
Shi Jin.
\newblock {E}fficient {A}symptotic-{P}reserving (ap) {S}chemes for {S}ome
  {M}ultiscale {K}inetic {E}quations.
\newblock {\em SIAM Journal on Scientific Computing}, 21(2):441--454, 1999.

\bibitem{jin2010asymptotic}
Shi Jin.
\newblock {A}symptotic {P}reserving (ap) {S}chemes for {M}ultiscale {K}inetic
  and {H}yperbolic {E}quations: {A} {R}eview.
\newblock {\em Lecture Notes for Summer School on ``Methods and Models of
  Kinetic Theory"(M\&MKT), Porto Ercole (Grosseto, Italy)}, pages 177--216,
  2010.

\bibitem{jin2016august}
Shi Jin, Jian-Guo Liu, and Zheng Ma.
\newblock {A} {M}icro-{M}acro {D}ecomposition {B}ased {S}tochastic
  {A}symptotic-{P}reserving {S}cheme for {L}inear {T}ransport {E}quations in
  {D}iffusive {R}egimes with {R}andom {I}nputs.
\newblock {\em Research in Math. Sci.}, 2016 to appear.

\bibitem{jin2017asymptotic}
Shi Jin and Liu Liu.
\newblock {A}n {A}symptotic-{P}reserving {S}tochastic {G}alerkin {M}ethod for
  the {S}emiconductor {B}oltzmann {E}quation with {R}andom {I}nputs and
  {D}iffusive {S}calings.
\newblock {\em Multiscale Modeling \& Simulation}, 15(1):157--183, 2017.

\bibitem{jinlu}
Shi Jin and Hanqing Lu.
\newblock {A}n {A}symptotic-{P}reserving {S}tochastic {G}alerkin {M}ethod for
  the {R}adiative {H}eat {T}ransfer {E}quations with {R}andom {I}nputs and
  {D}iffusive {S}calings.
\newblock {\em Journal of Computational Physics}, 334:182--206, 2017.

\bibitem{jin2017efficient}
Shi Jin, Hanqing Lu, and Lorenzo Pareschi.
\newblock {E}fficient {S}tochastic {A}symptotic-{P}reserving imex {M}ethods for
  {T}ransport {E}quations with {D}iffusive {S}calings and {R}andom {I}nputs.
\newblock {\em arXiv preprint arXiv:1703.03841}, 2017.

\bibitem{jin2000uniformly}
Shi Jin, Lorenzo Pareschi, and Giuseppe Toscani.
\newblock {U}niformly {A}ccurate {D}iffusive {R}elaxation {S}chemes for
  {M}ultiscale {T}ransport {E}quations.
\newblock {\em SIAM Journal on Numerical Analysis}, 38(3):913--936, 2000.

\bibitem{jin2015asymptotic}
Shi Jin, Dongbin Xiu, and Xueyu Zhu.
\newblock {A}symptotic-{P}reserving {M}ethods for {H}yperbolic and {T}ransport
  {E}quations with {R}andom {I}nputs and {D}iffusive {S}calings.
\newblock {\em Journal of Computational Physics}, 289:35--52, 2015.

\bibitem{jin2017hypocoercivity}
Shi Jin and Yuhua Zhu.
\newblock Hypocoercivity and {U}niform {R}egularity for the
  {V}lasov-{P}oisson-{F}okker-{P}lanck {S}ystem with {U}ncertainty and
  {M}ultiple {S}cales.
\newblock {\em arXiv preprint arXiv:1704.00208}, 2017.

\bibitem{keller1970initiation}
Evelyn~F Keller and Lee~A Segel.
\newblock {In}itiation of {S}lime {Mo}ld {A}ggregation {V}iewed {A}s {A}n
  {I}nstability.
\newblock {\em Journal of Theoretical Biology}, 26(3):399--415, 1970.

\bibitem{keller1971model}
Evelyn~F Keller and Lee~A Segel.
\newblock {M}odel for {C}hemotaxis.
\newblock {\em Journal of theoretical biology}, 30(2):225--234, 1971.

\bibitem{keller1971traveling}
Evelyn~F Keller and Lee~A Segel.
\newblock {T}raveling {B}ands of {C}hemotactic {B}acteria: {A} {T}heoretical
  {A}nalysis.
\newblock {\em Journal of theoretical biology}, 30(2):235--248, 1971.

\bibitem{keller1980assessing}
Evelyn~Fox Keller.
\newblock {A}ssessing the {K}eller-{S}egel {M}odel: {H}ow {H}as {I}t {F}ared?
\newblock In {\em Biological growth and spread}, pages 379--387. Springer,
  1980.

\bibitem{leveque1992numerical}
Randall~J LeVeque.
\newblock {\em Numerical {M}ethods for {C}onservation {L}aws}.
\newblock Springer Science \& Business Media, 1992.

\bibitem{li2016uniform}
Qin Li and Li~Wang.
\newblock Uniform {R}egularity for {L}inear {K}inetic {E}quations with {R}andom
  {I}nput {B}ased on {H}ypocoercivity.
\newblock {\em arXiv preprint arXiv:1612.01219}, 2016.

\bibitem{nagai1997global}
Toshitaka Nagai.
\newblock {G}lobal {E}xistence of {S}olutions to {A} {P}arabolic {S}ystem for
  {C}hemotaxis in {T}wo {S}pace {D}imensions.
\newblock {\em Nonlinear Analysis: Theory, Methods \& Applications},
  30(8):5381--5388, 1997.

\bibitem{othmer1988models}
Hans~G Othmer, Steven~R Dunbar, and Wolfgang Alt.
\newblock {M}odels of {D}ispersal in {B}iological {S}ystems.
\newblock {\em Journal of mathematical biology}, 26(3):263--298, 1988.

\bibitem{othmer2000diffusion}
Hans~G Othmer and Thomas Hillen.
\newblock {T}he {D}iffusion {L}imit of {T}ransport {E}quations {D}erived from
  {V}elocity-{J}ump {P}rocesses.
\newblock {\em SIAM Journal on Applied Mathematics}, 61(3):751--775, 2000.

\bibitem{othmer2002diffusion}
Hans~G Othmer and Thomas Hillen.
\newblock The {D}iffusion {L}imit of {T}ransport {E}quations ii: {C}hemotaxis
  {E}quations.
\newblock {\em SIAM Journal on Applied Mathematics}, 62(4):1222--1250, 2002.

\bibitem{pareschi2005implicit}
Lorenzo Pareschi and Giovanni Russo.
\newblock Implicit-{E}xplicit {R}unge-{K}utta {S}chemes and {A}pplications to
  {H}yperbolic {S}ystems with {R}elaxation.
\newblock {\em Journal of Scientific computing}, 25(1-2):129--155, 2005.

\bibitem{patlak1953random}
Clifford~S Patlak.
\newblock {R}andom {W}alk {W}ith {P}ersistence and {E}xternal {B}ias.
\newblock {\em Bulletin of Mathematical Biology}, 15(3):311--338, 1953.

\bibitem{perthame2004pde}
Benoit Perthame.
\newblock Pde {M}odels for {C}hemotactic {M}ovements: {P}arabolic, {H}yperbolic
  and {K}inetic.
\newblock {\em Applications of Mathematics}, 49(6):539--564, 2004.

\bibitem{perthame2006transport}
Beno{\^\i}t Perthame.
\newblock {\em {T}ransport {E}quations in {B}iology}.
\newblock Springer Science \& Business Media, 2006.

\bibitem{sharifi2011one}
Mohsen Sharifi~tabar.
\newblock One-{D}imensional {C}hemotaxis {K}inetic {M}odel.
\newblock {\em Nonlinear Differential {E}quations and Applications NoD{E}A},
  18:139--172, 2011.

\bibitem{ShuHuJin}
Ruiwen Shu, Jingwei Hu, and Shi Jin.
\newblock {A} {S}tochastic {G}alerkin {M}ethod for the {B}oltzmann {E}quation
  with {M}ulti-{D}imensional {R}andom {I}nputs {U}sing {S}parse {W}avelet
  {B}ases.
\newblock {\em Numerical Mathematics: Theory, Methods and Applications},
  10(2):465--488, 2017.

\bibitem{stevens1997aggregation}
Angela Stevens and Hans~G Othmer.
\newblock {A}ggregation, {B}lowup, and {C}ollapse: the abc's of {T}axis in
  {R}einforced {R}andom {W}alks.
\newblock {\em SIAM Journal on Applied Mathematics}, 57(4):1044--1081, 1997.

\bibitem{xiu2010numerical}
Dongbin Xiu.
\newblock {\em Numerical {M}ethods for {S}tochastic {C}omputations: {A}
  {S}pectral {M}ethod {A}pproach}.
\newblock Princeton university press, 2010.

\bibitem{xiu2002wiener}
Dongbin Xiu and George~Em Karniadakis.
\newblock {T}he {W}iener--{A}skey {P}olynomial {C}haos for {S}tochastic
  {D}ifferential {E}quations.
\newblock {\em SIAM journal on scientific computing}, 24(2):619--644, 2002.

\bibitem{zhu2016vlasov}
Yuhua Zhu and Shi Jin.
\newblock The {V}lasov-{P}oisson-{F}okker-{P}lanck {S}ystem with {U}ncertainty
  and a {O}ne-dimensional {A}symptotic {P}reserving {M}ethod.
\newblock {\em SIAM Multiscale Model. Simul.}, 2017 to appear.

\bibitem{zuazo2001fourier}
Javier~Duoandikoetxea Zuazo.
\newblock {\em Fourier {A}nalysis}, volume~29.
\newblock American Mathematical Soc., 2001.

\end{thebibliography}

\end{document}